\setlist[enumerate]{label={\upshape(\alph*)}}
\tikzstyle{vertex}=[circle, draw, inner sep=0pt, minimum size=4pt,fill=black]
\tikzstyle{hollowvertex}=[circle, draw, inner sep=0pt, minimum size=4pt]
\tikzstyle{namedvertex}=[circle, draw, inner sep=1pt, minimum size=12pt]
\tikzstyle{phantomvertex}=[circle, draw, inner sep=0pt, minimum size=4pt,color=white]
\newtheorem{defn}{Definition}[section]
\newtheorem{lemma}[defn]{Lemma}
\newtheorem{ques}[defn]{Question}
\newtheorem{prop}[defn]{Proposition}
\newtheorem{theorem}[defn]{Theorem}
\newtheorem{corollary}[defn]{Corollary}
\newtheorem{conj}[defn]{Conjecture}
\begin{document}

\title{On the largest real root of the independence polynomial of a unicyclic graph}
\author{Iain Beaton\\
\small Dept. of Mathematics \& Statistics\\[-0.8ex]
\small Acadia University\\[-0.8ex] 
\small Wolfville, NS\\
\small\tt iain.beaton@acadiau.ca\\
\and
Ben Cameron\thanks{Corresponding author} \thanks{Research support from the Natural Sciences and Engineering Research Council of Canada (NSERC) is gratefully acknowledged by Ben Cameron (grants DGECR-2022-00446 and RGPIN-2022-03697)}\\
\small Dept. of Computing Science\\[-0.8ex]
\small The King's University\\[-0.8ex] 
\small Edmonton, AB\\
\small\tt ben.cameron@kingsu.ca\\
\\
}
\date{\today}

\maketitle

%
%
%

\tikzset{bignode/.style={minimum size=3em,}}

\begin{abstract}
The independence polynomial of a graph $G$, denoted $I(G,x)$, is the generating polynomial for the number of independent sets of each size.  The roots of $I(G,x)$ are called the \textit{independence roots} of $G$. It is known that for every graph $G$, the independence root of smallest modulus, denoted $\xi(G)$, is real. The relation $\preceq$ on the set of all graphs is defined as follows, $H\preceq G$ if and only if $I(H,x)\ge I(G,x)\text{ for all }x\in [\xi(G),0].$ 

We find the maximum and minimum connected unicyclic and connected well-covered unicyclic graphs of a given order with respect to $\preceq$. This extends 2013 work by Csikv\'{a}ri where the maximum and minimum trees of a given order were determined and also answers an open question posed in the same work. Corollaries of our results give the graphs that minimize and maximize $\xi(G)$ among all connected (well-covered) unicyclic graphs. We also answer more related open questions posed by Oboudi in 2018 and disprove a conjecture due to Levit and Mandrescu from 2008.

\noindent
{\bf Keywords:} independence polynomial; independence root; smallest modulus; independent set
\end{abstract}
\maketitle

\section{Introduction}\label{sec:intro}
All graphs considered in this paper are finite, simple, and undirected. For a vertex $v$ of a graph $G$, the \textit{closed neighbourhood} of $v$, is the set $N[v]=\{u\in V(G): u\sim v\}\cup\{v\}$ and for an edge $e=uv$, the closed neighbourhood is the set $N[e]=N[u]\cup N[v]$. For $S\subseteq V(G)$, $G[S]$ denotes the subgraph induced by $S$. For $S\subseteq V(G)$, let $G-S$ denote the graph $G[V(G)-S]$. If $S=\{v\}$, then we simply write $G-v$ instead of $G-\{v\}$. For an edge $e\in E(G)$, the graph $G-e$ is the graph obtained from $G$ by removing the edge $e$. For a pair of nonadjacent vertices $u$ and $v$ in $V(G)$, the graph $G+uv$ is the graph obtained from $G$ by adding the edge $uv$. The \textit{disjoint union} of two graphs $G$ and $H$, denotes $G\cup H$ is the graph $(V(G)\cup V(H),E(G)\cup E(H))$, and $kG$ denoted the disjoint union of $k$ copies of $G$. A subset of vertices of a graph $G$ is called \textit{independent} if the all the vertices in the subset are pairwise nonadjacent.  The \textit{independence number} of  $G$ is the size of the largest independent set in $G$ and is denoted by $\alpha(G)$. The \textit{independence polynomial} of $G$, denoted by $I(G,x)$ is defined by 
\[ I(G,x)=\sum_{k=0}^{\alpha}i_kx^k,\] 
where $i_k$ is the number of independent sets of size $k$ in $G$. The roots of $I(G,x)$ are called the \textit{independence roots} of $G$ and they, along with other properties of the independence polynomial, have been subject to much investigation since the introduction of $I(G,x)$ in 1983 by Gutman and Harary \cite{INDFIRST}. The independence polynomial generalizes the matching-generating polynomial, but where the latter has all real roots \cite{HEILMANN}, the former has roots dense in $\mathbb{C}$ \cite{INDROOTS}.  However, when restricting $G$ to the class of claw-free graphs, Chudnovsky and Seymour \cite{Chudnovsky2007} showed that $G$ has all real independence roots. Moreover, for every graph, the independence root of smallest modulus is always real and in the interval $[-1,0)$ \cite{Csikvari2013}. For a graph $G$ let $\xi(G)$ denote the independence root of smallest modulus. Note that since $\xi(G)$ is always a negative real number, it is equivalently referred to as the largest real independence root. 

One question of interest is: given a family of graphs, what is the maximum/minimum modulus  of an independence root of a graph in that family?  Linear bounds on the maximum modulus of independence roots of well-covered graphs of fixed order \cite{BDN2000} and respective exponential bounds are known for all graphs and forests of a fixed order \cite{BrownCameron2020}. Although the bounds in all cases are tight asymptotically, it remains unknown which graph (or family of graphs) exactly achieves the maximum values. Only recently have bounds on the independence root of smallest modulus in a family of graphs been considered, and in these cases the graphs with this minimum value can be found. For example, if $T$ is a tree of order $n$, then Csikv\'{a}ri~\cite{Csikvari2013posetII} showed, using their remarkable generalized tree shift~\cite{Csikvari2010}, that $\xi(P_n)\le \xi(T)\le \xi(S_n)$ where $P_n$ and $S_n$ are the path and star of order $n$, respectively. It follows from this and a result in \cite{Oboudi2018treeroots} that among all connected graphs $G$ of order $n$, $\xi(P_n)\le \xi(G)\le \xi(K_n)$, where $K_n$ is the complete graph of order $n$. However, the problem is open for all other families of graphs. 
 
To find graphs with maximum and minimum values of $\xi$ in a given family, Csikv\'{a}ri~\cite{Csikvari2013posetII} introduced (see also \cite{cliquepolys}) the relation $\preceq$ on the set of all graphs, defined as follows, 

$$H\preceq G\text{ if and only if }I(H,x)\ge I(G,x)\text{ for all }x\in [\xi(G),0].$$ 
Since for every $x\in (\xi(G),0]$, $I(G,x)>0$, it follows that $H\preceq G$ implies $\xi(H)\le \xi(G)$. This stronger relation affords some advantages so we will work with this instead of directly trying to order graphs by their largest real root. One of these advantages is the ability to track graphs under $\preceq$ subject to small local changes, like deleting and adding edges.  We also note that the relation $\preceq$ can be made strict in the following sense, $H\prec G$ if and only if $H\preceq G$ and $I(H,x)\neq I(G,x)$. This can require knowing when two graphs share an independence polynomial which is another recent area of interest on the independence polynomial.

We say that two graphs $G$ and $H$ are \textit{independence equivalent}, denoted $G \sim H$, if they have the same independence polynomial. For example, $C_n\sim D_n$ \cite{Oboudi2018} where $D_n$ is the graph in Figure~\ref{fig:Ln}. Independence equivalence is clearly an equivalence relation on the set of all unlabelled graphs, so we define the \textit{independence equivalence class} of a graph $G$, denoted $[G]$, to be the set of all graphs that are independence equivalent to $G$. A graph $G$ is called \textit{independence unique} if $[G]=\{G\}$. Interest in independence equivalence problems has been growing recently. Oboudi \cite{Oboudi2018} classified all connected graphs $G$ such that $G\sim C_n$. Li et al. \cite{Li2016} showed $P_n$ is independence unique among all connected graphs. With Brown, the authors extended this work by considering graphs in $[C_n]$ and $[P_n]$ that are disconnected~\cite{BeatonBrownCameron2019}. Very recently, Ng~\cite{Ng2021cycles,Ng2022paths} completely determined $[C_n]$ and $[P_n]$ for all $n$.  Independence equivalence was also vital for Barik et al.'s work determining all graphs whose independence fractals are line segments \cite{Barik2020}.

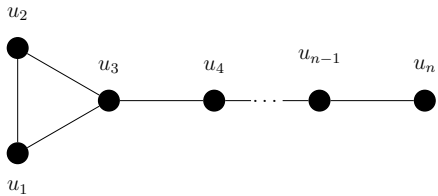
\begin{figure}[htp]
\def\c{0.7}
\def\r{2}
\centering
\scalebox{\c}{
\begin{tikzpicture}
\begin{scope}[every node/.style={circle,thick,fill,draw}]
    \node[label=above:$u_2$,draw] (2) at (1.13397*\r,0.5*\r) {};
    \node[label=above:$u_3$,draw] (3) at (2*\r,0*\r) {};
    \node[label=above:$u_{n-1}$,draw] (4) at (4*\r,0*\r) {};
    \node[label=above:$u_n$,draw] (5) at (5*\r,0*\r) {};
    \node[label=below:$u_1$,draw] (7) at (1.13397*\r,-0.5*\r) {};
    \node[label=above:$u_4$,draw] (8) at (3*\r,0*\r) {};   
\end{scope}

\begin{scope}

    \path [-] (2) edge node {} (3);
    \path [-] (4) edge node {} (5);
    \path [-] (3) edge node {} (7);

    \path [-] (2) edge node {} (7);
    \path [-] (3) edge node {} (8);
    
\end{scope}

\path (8) -- node[auto=false]{\ldots} (4);
\path [-] (8) edge node {} (3*\r+.7,0*\r) ;
\path [-] (4*\r-.7,0*\r) edge node {} (4);
\end{tikzpicture}}
\caption{The graph $D_n$}%
\label{fig:Ln}%
\end{figure}

Let $\mathcal{A}$ be the set of all distinct independence equivalence classes of all graphs. Although not all elements of $\mathcal{A}$ are related via $\preceq$, (see for example $[\overline{K_3}]$ and $[K_2]$), we still have the following. 

\begin{theorem}[\cite{Csikvari2013posetII}]\label{thm:partialorder}
$(\mathcal{A},\preceq)$ is a partial order.
\end{theorem}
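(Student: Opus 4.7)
My plan is to verify the three defining properties of a partial order, after first checking that $\preceq$ is well-defined on equivalence classes. Well-definedness is immediate from the definition of $\sim$: if $H \sim H'$ and $G \sim G'$, then $I(H,x) = I(H',x)$, $I(G,x) = I(G',x)$, and $\xi(G) = \xi(G')$, so the inequality $I(H,x) \geq I(G,x)$ on $[\xi(G),0]$ passes unchanged to the primed graphs. Reflexivity is trivial since $I(G,x) \geq I(G,x)$ holds identically.

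For antisymmetry, suppose $[H] \preceq [G]$ and $[G] \preceq [H]$. Then $I(H,x) \geq I(G,x)$ on $[\xi(G),0]$ and $I(G,x) \geq I(H,x)$ on $[\xi(H),0]$. On the intersection $[\max\{\xi(G),\xi(H)\},0]$, which contains infinitely many points since both $\xi$ values are negative, both inequalities hold simultaneously, forcing $I(H,x) = I(G,x)$ on that interval. Two polynomials agreeing on an infinite set are identical, so $H \sim G$, i.e.\ $[H] = [G]$.

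For transitivity, suppose $[H] \preceq [G]$ and $[G] \preceq [K]$. The key preliminary observation, already noted in the excerpt, is that $[G] \preceq [K]$ implies $\xi(G) \leq \xi(K)$: indeed, $I(G,x) \geq I(K,x) > 0$ for every $x \in (\xi(K),0]$, which forbids $\xi(G)$ from lying in $(\xi(K),0]$. Consequently $[\xi(K),0] \subseteq [\xi(G),0]$, and on the smaller interval we may chain the two given inequalities $I(H,x) \geq I(G,x) \geq I(K,x)$ to conclude $I(H,x) \geq I(K,x)$ on $[\xi(K),0]$, i.e.\ $[H] \preceq [K]$.

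The only step that requires any thought is transitivity, and the heart of it is the interval-nesting observation $\xi(G) \leq \xi(K)$ that follows from positivity of $I(K,x)$ to the right of $\xi(K)$; without this nesting, one could not compare $I(H,x)$ and $I(K,x)$ on the whole of $[\xi(K),0]$. Everything else is a direct unwinding of definitions together with the elementary fact that a polynomial is determined by its values on any infinite set.
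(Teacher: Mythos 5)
Your proof is correct. Note that the paper does not actually prove this statement---it is imported from Oboudi's work as Theorem~\ref{thm:partialorder}---but your argument is the standard one: well-definedness and reflexivity are immediate, antisymmetry follows from two polynomials agreeing on a nondegenerate interval, and the crux of transitivity is exactly the interval-nesting fact $G\preceq K\Rightarrow\xi(G)\le\xi(K)$ via positivity of $I(K,x)$ on $(\xi(K),0]$, which the paper itself records in its introduction.
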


This paper is structured as follows. We find the maximum and minimum graphs with respect to $\preceq$ among all connected unicyclic graphs of a given order in Section~\ref{sec:unicyclicgraphs} which answers an open question posed in \cite{Oboudi2018treeroots}. Then, in Section~\ref{sec:wcunicyclic}, we use characterizations from \cite{nowawcgirth} and \cite{ToppVolkmann1990} for well-covered trees and well-covered unicyclic graphs, respectively, to find the maximum and minimum graphs for both of these well-covered families with respect to $\preceq$. In Section~\ref{sec:counterexamples}, we provide an infinite family of counterexamples to a conjecture due to Levit and Mandrescu \cite{INDPOLY,LevitMandrescu2003UnimodalitySomeTrees}. These counterexamples also serve as answers two open questions in \cite{Oboudi2018treeroots}. We then disprove a conjecture of Oboudi \cite{Oboudi2018treeroots} by showing that $\preceq$ is not a total order on the set of independence equivalence classes of all trees of fixed order. We conclude with some open problems.

\section{Unicyclic graphs}\label{sec:unicyclicgraphs}
In this section we focus our attention on connected unicyclic graphs, answering the following question.  We note that being able to generate all connected unicyclic graphs of small order using \texttt{nauty}~\cite{nauty} was invaluable for building intuition for our results in this section and the rest of the paper.

\begin{ques}[\cite{Oboudi2018treeroots}]
What are the maximal and minimal elements with respect to $\preceq$ among the family of connected unicyclic graphs?
\end{ques}

Before we can answer this question though, we will need a few more preliminary results.

\begin{theorem}[\cite{Csikvari2013posetII}]\label{thm:subgraphlessthangraph}
If $G$ is a graph with $H$ a proper subgraph of $G$, then $H\prec G$.
\end{theorem}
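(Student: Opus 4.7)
The plan is to prove the statement by induction on the ``distance'' $k := |V(G)\setminus V(H)| + |E(G)\setminus E(H)|$. Any proper subgraph is reached from $G$ by $k$ elementary moves, each of which is either deleting an edge or deleting an isolated vertex. For the inductive step $k\geq 2$, I would take any intermediate proper subgraph $G'$ one move from $G$, apply the base case to get $G' \prec G$ and hence $\xi(G')\leq\xi(G)$, and then apply the inductive hypothesis to get $H \preceq G'$ on $[\xi(G'),0] \supseteq [\xi(G),0]$; chaining the inequalities on the smaller interval yields $I(H,x)\geq I(G',x)\geq I(G,x)$. Strictness as polynomials is automatic, since $|V(H)|+|E(H)| < |V(G)|+|E(G)|$ forces either the linear or the quadratic coefficient of the independence polynomial to differ.

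For the isolated-vertex base case ($H=G-v$ with $v$ isolated in $G$), the identity $I(G,x) = (1+x)I(G-v,x)$ gives
$$I(H,x) - I(G,x) = -x\,I(G-v,x).$$
Since the roots of $I(G)$ are the roots of $I(G-v)$ together with $-1$, one has $\xi(G) = \xi(G-v)$; on $[\xi(G),0]$ both $-x$ and $I(G-v,x)$ are nonnegative, so the inequality holds, and the polynomials differ in degree.

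The edge-deletion base case ($H = G-e$ with $e=uv$) is the main obstacle. Partitioning the independent sets of $H$ by whether they contain both $u$ and $v$ yields
$$I(G-e,x) = I(G,x) + x^2\,I(G - N_G[u] - N_G[v], x),$$
so it suffices to show $I(G - N_G[u] - N_G[v], x)\geq 0$ on $[\xi(G),0]$. This reduces to the following auxiliary lemma, which I identify as the true crux of the proof:
\begin{quote}
\emph{If $F$ is an induced subgraph of $G$, then $\xi(F)\leq\xi(G)$; equivalently, $I(F,x)\geq 0$ on $[\xi(G),0]$.}
\end{quote}

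I would prove the auxiliary lemma by a separate induction on $|V(G)|$, reducing to the single-vertex-deletion case $F = G-v$. If $v$ is isolated, the factorization above gives $\xi(G-v)=\xi(G)$ directly. Otherwise, assume for contradiction that $\xi(G-v)>\xi(G)$ and write $x_0 = \xi(G-v)\in(\xi(G),0)$. Evaluating the vertex recurrence $I(G,x) = I(G-v,x) + x\,I(G-N_G[v],x)$ at $x_0$ gives $I(G-N_G[v],x_0) = I(G,x_0)/x_0 < 0$, since $I(G,x_0)>0$ and $x_0<0$. Combined with $I(G-N_G[v],0) = 1 > 0$, the intermediate value theorem produces a root of $I(G-N_G[v],\cdot)$ strictly between $x_0$ and $0$, so $\xi(G-N_G[v]) > x_0 = \xi(G-v)$. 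But $G-N_G[v]$ is an induced subgraph of $G-v$ with strictly fewer vertices (as $v$ is non-isolated), contradicting the inductive hypothesis applied to the smaller graph $G-v$, which completes the argument.
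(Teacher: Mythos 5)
This theorem is imported by the paper from Oboudi's work with no proof given internally, so there is no in-paper argument to compare against; your proposal therefore has to stand on its own, and it does. The reduction to the two elementary moves, the chaining of intervals using $G'\prec G\Rightarrow \xi(G')\le\xi(G)$, the strictness via the linear and quadratic coefficients ($n$ and $\binom{n}{2}-m$), and the identification of the monotonicity of $\xi$ under passage to induced subgraphs as the real crux are all correct, and your proof of that auxiliary lemma via the vertex recurrence and the intermediate value theorem is the standard one (it is essentially how this circle of results is established in the cited literature, via the deletion identities of Proposition~\ref{prop:deletion} and Theorem~\ref{thm:cliquedeletionformula}). The only loose end is degenerate: when $N_G[v]=V(G)$ (or $N_G[u]\cup N_G[v]=V(G)$ in the edge case), the graph $G-N_G[v]$ is empty, its independence polynomial is the constant $1$, and $\xi$ of it is undefined, so the intermediate value step cannot be invoked. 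But in that situation the conclusion you need is immediate --- the computed value $I(G-N_G[v],x_0)=I(G,x_0)/x_0<0$ already contradicts $I(G-N_G[v],x)\equiv 1$, and in the edge case the nonnegativity of $I(G-N_G[u]-N_G[v],x)$ is trivial --- so this is a one-line patch rather than a gap.
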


\begin{theorem}[\cite{Oboudi2018treeroots}]\label{thm:recursivepartialorder}
Let $G$ and $H$ be graphs with $u\in V(G)$, $e\in E(G)$, $v\in V(H)$, and $f\in E(H)$. Then the following hold:
\begin{itemize}
\item[$(i)$] If $H-v\preceq G-u$ and $G-N[u]\preceq H-N[v]$, then $H\preceq G$. 
\item[$(ii)$] If $H-f\preceq G-e$ and $G-N[e]\preceq H-N[f]$, then $H\preceq G$. 
\end{itemize}

Moreover, if either of the relations are strict in the hypothesis of $(i)$ (resp. $(ii)$), then $H\prec G$.
\end{theorem}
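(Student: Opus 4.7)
The plan is to use the two standard independence polynomial recurrences. For a vertex $u$ of $G$,
\[ I(G,x) = I(G-u,x) + x\cdot I(G-N[u],x),\]
and for an edge $e$ of $G$,
\[ I(G,x) = I(G-e,x) - x^2\cdot I(G-N[e],x),\]
the second following from the fact that $I(G-e,x) - I(G,x)$ counts (weighted by $x^{|S|}$) exactly the independent sets of $G-e$ that contain both endpoints of $e$. Writing the analogous identity for $H$ with $v$ (resp.\ $f$) in place of $u$ (resp.\ $e$) and subtracting gives, for (i),
\[ I(H,x)-I(G,x) = \bigl[I(H-v,x)-I(G-u,x)\bigr] + (-x)\bigl[I(G-N[u],x)-I(H-N[v],x)\bigr],\]
and for (ii),
\[ I(H,x)-I(G,x) = \bigl[I(H-f,x)-I(G-e,x)\bigr] + x^2\bigl[I(G-N[e],x)-I(H-N[f],x)\bigr].\]
In each case, for $x\in[\xi(G),0]$ we have $-x\ge 0$ and $x^2\ge 0$, so it suffices to show that each bracketed term is nonnegative on $[\xi(G),0]$.

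The first bracket in (i) is $\ge 0$ on $[\xi(G-u),0]$ by the hypothesis $H-v\preceq G-u$. By Theorem~\ref{thm:subgraphlessthangraph}, $G-u\prec G$, so $\xi(G-u)\le\xi(G)$, hence $[\xi(G),0]\subseteq[\xi(G-u),0]$ and the first bracket is nonnegative throughout $[\xi(G),0]$. The second bracket is $\ge 0$ on $[\xi(H-N[v]),0]$ by the hypothesis $G-N[u]\preceq H-N[v]$. To place $[\xi(G),0]$ inside this interval, apply Theorem~\ref{thm:subgraphlessthangraph} twice: $H-N[v]\prec H-v$ gives $\xi(H-N[v])\le\xi(H-v)$, and the implication $H-v\preceq G-u$ together with the observation (noted immediately after the definition of $\preceq$ in the introduction) that $H'\preceq G'$ implies $\xi(H')\le\xi(G')$ gives $\xi(H-v)\le\xi(G-u)\le\xi(G)$. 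So $\xi(H-N[v])\le\xi(G)$, as required. The argument for (ii) is identical, replacing $G-u,H-v,G-N[u],H-N[v]$ by $G-e,H-f,G-N[e],H-N[f]$ and using the edge-deletion analogues of Theorem~\ref{thm:subgraphlessthangraph}.

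Combining the two nonnegative terms shows $I(H,x)\ge I(G,x)$ on $[\xi(G),0]$, i.e.\ $H\preceq G$. For the strictness clause: if, say, $H-v\prec G-u$, then $I(H-v,x)-I(G-u,x)$ is a nonzero polynomial, so the identity above forces $I(H,x)\not\equiv I(G,x)$; combined with $H\preceq G$ this gives $H\prec G$. The other strict cases are analogous.

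The main thing to watch is the containment of intervals, which is the only place where we actually need Theorem~\ref{thm:subgraphlessthangraph}; the algebra of the two recurrences is routine and the sign of $-x$ and $x^2$ on $[\xi(G),0]$ is what makes both parts work with the hypotheses stated in exactly the form given.
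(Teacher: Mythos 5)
This theorem is quoted from \cite{Oboudi2018treeroots}; the paper supplies no proof of it, so there is nothing internal to compare your argument against. Your proof is correct and is the standard one: the vertex- and edge-deletion recurrences give $I(H,x)-I(G,x)$ as a sum of two terms, and the interval containments $\xi(G-u)\le\xi(G)$ and $\xi(H-N[v])\le\xi(H-v)\le\xi(G-u)\le\xi(G)$ (from Theorem~\ref{thm:subgraphlessthangraph} and the remark that $H'\preceq G'$ implies $\xi(H')\le\xi(G')$) make both terms nonnegative on all of $[\xi(G),0]$, with the factors $-x$ and $x^2$ having the right sign there. One step is stated too briskly: the fact that $I(H-v,x)-I(G-u,x)\not\equiv 0$ does not by itself ``force'' $I(H,x)\not\equiv I(G,x)$ through the identity, since in principle the two summands could cancel. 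What actually closes this is the sign analysis you already did: a nonzero polynomial that is nonnegative on the nondegenerate interval $[\xi(G),0]$ is strictly positive at some point of it, and at that point the other summand is $\ge 0$, so the difference is strictly positive there. For strictness coming from the second hypothesis one should additionally observe that the relevant bracket vanishes at $x=0$ (both constant terms are $1$), so the witnessing point lies in $[\xi(G),0)$ where $-x>0$; with that said, the whole argument is complete.
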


Let $G$ be a graph with $\Delta(G)\ge 3$, $\delta(G)=1$, and let $u\in V(G)$ be a leaf. Let $v$ be a vertex in $G$ with $\deg(v)\ge 3$ such that it has the shortest distance to $u$ among all vertices with degree at least $3$. Let $w\neq u$ be a neighbour of $v$ not on the path from $u$ to $v$. Note that such a neighbour always exists as $\deg(v)\ge 3$. Define $G^{\star}_{u,v,w}=(G-vw)+uw$. Note that $G$ is connected if and only if $G^{\star}_{u,v,w}$ is connected. Furthermore it can easily be seen that $G^{\star}_{u,v,w}$ has one fewer leaf than $G$. This graph operation is illustrated in Figure \ref{fig:G*}.

\setcounter{subfigure}{0}
\begin{figure}[!h]
\def\c{0.5}
\def\r{1}
\centering
\subfigure[$G$]{
\scalebox{\c}{
\begin{tikzpicture}
\draw (0,0) ellipse (1.2 and 2.5);

\node[shape=circle,draw=black,fill=black, label=left:\Large $u$] (1) at (-6,0) {};
\node[shape=circle,draw=black,fill=black] (2) at (-4,0) {};
\node[shape=circle,draw=black,fill=black, label=above:\Large $v$] (3) at (-2,0) {};
\node[shape=circle,draw=black,fill=black, label=above:\Large $w$] (4) at (-0.25,1.5) {};
\node[shape=circle,draw=black,fill=black] (5) at (-0.25,0.5) {};
\node[shape=circle,draw=black,fill=black] (6) at (-0.25,-1.5) {};

\begin{scope}
    \path [-] (1) edge node {} (2);
    \path [-] (2) edge node {} (-3.5,0);
    \path [-] (-2.5,0) edge node {} (3);
    \path [-] (3) edge node {} (4);
    \path [-] (3) edge node {} (5);
    \path [-] (3) edge node {} (6);
\end{scope}

\begin{scope}
    \path (-3.5,0) -- node[auto=false]{\Large \ldots} (-2.5,0);
    \path (-0.25,0.5) -- node[auto=false]{\Large \vdots} (-0.25,-1.5);
\end{scope}
\end{tikzpicture}}}
\qquad
\subfigure[$G^{\star}_{u,v,w}$]{
\scalebox{\c}{
\begin{tikzpicture}
\draw (0,0) ellipse (1.2 and 2.5);

\node[shape=circle,draw=black,fill=black, label=left:\Large $u$] (1) at (-6,0) {};
\node[shape=circle,draw=black,fill=black] (2) at (-4,0) {};
\node[shape=circle,draw=black,fill=black, label=above:\Large $v$] (3) at (-2,0) {};
\node[shape=circle,draw=black,fill=black, label=above:\Large $w$] (4) at (-0.25,1.5) {};
\node[shape=circle,draw=black,fill=black] (5) at (-0.25,0.5) {};
\node[shape=circle,draw=black,fill=black] (6) at (-0.25,-1.5) {};

\begin{scope}
    \path [-] (1) edge node {} (2);
    \path [-] (2) edge node {} (-3.5,0);
    \path [-] (-2.5,0) edge node {} (3);
    \path [-] (1) edge node {} (4);
    \path [-] (3) edge node {} (5);
    \path [-] (3) edge node {} (6);
\end{scope}

\begin{scope}
    \path (-3.5,0) -- node[auto=false]{\Large \ldots} (-2.5,0);
    \path (-0.25,0.5) -- node[auto=false]{\Large \vdots} (-0.25,-1.5);
\end{scope}
\end{tikzpicture}}}
\caption{A graph operation for graphs with $\Delta(G)\geq 3$ and $\delta=1$.}%
\label{fig:G*}%
\end{figure}
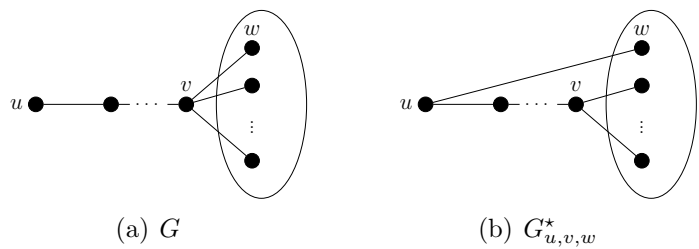

\begin{theorem}[\cite{Oboudi2018treeroots}]\label{thm:boldstaroperation}
If $G$ is a graph with $u,v,w$ as above, then  $G^{\star}_{u,v,w}\preceq G$.
\end{theorem}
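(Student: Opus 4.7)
The plan is to apply Theorem~\ref{thm:recursivepartialorder}(ii) with $e = vw \in E(G)$ and $f = uw \in E(H)$ where $H := G^{\star}_{u,v,w}$. Since $H = G - vw + uw$, the identity $H - f = G - e$ makes the first hypothesis $H - f \preceq G - e$ vacuous, so the task reduces to establishing $G' \preceq H'$ where $G' := G - N_G[vw]$ and $H' := H - N_H[uw]$.

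Let $u = x_0, x_1, \ldots, x_k = v$ be the shortest $u$-$v$ path and $y_1, \ldots, y_{\deg(v)-2}$ the remaining neighbours of $v$. By the choice of $v$, the interior path vertices $x_1, \ldots, x_{k-1}$ all have degree $2$ in $G$. When $k = 1$ a direct comparison shows $V(G') = V(H') \setminus \{y_j : y_j \notin N_G(w)\}$ and the induced edge sets agree (every endpoint of $vw$ or $uw$ is removed), so $G'$ is an induced subgraph of $H'$ and Theorem~\ref{thm:subgraphlessthangraph} (together with reflexivity of $\preceq$) yields $G' \preceq H'$.

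The main obstacle is the case $k \geq 2$, where neither graph is contained in the other: only $G'$ removes $\{v, x_{k-1}\} \cup \{y_1, \ldots, y_{\deg(v)-2}\}$ while only $H'$ removes $\{u, x_1\}$. Exploiting that the interior path vertices have degree~$2$ in $G$, I would derive the disjoint-union decompositions
\[
G' \cong P_{k-1} \sqcup F, \qquad H' - v \cong P_{k-2} \sqcup W, \qquad H' - N_{H'}[v] \cong P_{k-3} \sqcup F,
\]
where $F := G[V(G') \setminus \{u, x_1, \ldots, x_{k-2}\}]$ and $W := G[V(F) \cup \{y_j : y_j \notin N_G(w)\}]$ (with the conventions $P_0 = \varnothing$ and $I(P_{-1}, x) := 1$ so the recurrence extends down to $n = 1$). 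Applying the identity $I(H', x) = I(H' - v, x) + x\, I(H' - N_{H'}[v], x)$ together with the path recurrence $I(P_{k-1}, x) = I(P_{k-2}, x) + x\, I(P_{k-3}, x)$, the difference telescopes to
\[
I(G', x) - I(H', x) \;=\; I(P_{k-2}, x)\,\bigl[I(F, x) - I(W, x)\bigr].
\]

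To conclude, I would note that both $P_{k-2}$ (embedded via $\{x_2, \ldots, x_{k-1}\}$) and $W$ are induced subgraphs of $H'$, so Theorem~\ref{thm:subgraphlessthangraph} gives $\xi(P_{k-2}) \leq \xi(H')$ and $\xi(W) \leq \xi(H')$. The first forces $I(P_{k-2}, x) \geq 0$ throughout $[\xi(H'), 0]$; the second, paired with $F$ being an induced subgraph of $W$, yields $I(F, x) \geq I(W, x)$ on $[\xi(W), 0] \supseteq [\xi(H'), 0]$. The product of the two nonnegative factors is nonnegative on $[\xi(H'), 0]$, so $G' \preceq H'$, and Theorem~\ref{thm:recursivepartialorder}(ii) then yields $G^{\star}_{u,v,w} = H \preceq G$.
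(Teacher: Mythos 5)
This statement is quoted from Oboudi's paper and is not proved in the present paper, so there is no in-house argument to compare against; judged on its own, your proof is correct and complete. The reduction via Theorem~\ref{thm:recursivepartialorder}(ii) with $e=vw$, $f=uw$ is the natural route (the first hypothesis collapses because $H-uw=G-vw$), and your handling of the remaining inequality $G-N[vw]\preceq H-N[uw]$ checks out: the minimality of $v$ forces the interior vertices of the $u$--$v$ path to have degree $2$, which makes $P_{k-1}$ split off as a component of $G'$ and justifies the decompositions of $H'-v$ and $H'-N_{H'}[v]$; the vertex-deletion identity of Proposition~\ref{prop:deletion} combined with the path recurrence then gives $I(G',x)-I(H',x)=I(P_{k-2},x)\bigl(I(F,x)-I(W,x)\bigr)$ exactly as you state, and both factors are nonnegative on $[\xi(H'),0]$ by Theorem~\ref{thm:subgraphlessthangraph} since $P_{k-2}$ and $W$ sit inside $H'$ and $F$ sits inside $W$. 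Your separate treatment of $k=1$ (where $G'$ is simply an induced subgraph of $H'$) and the boundary conventions $I(P_0,x)=I(P_{-1},x)=1$ for $k=2$ are also handled correctly.
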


\begin{theorem}[\cite{Oboudi2018}]\label{thm:cycleequivclass}
If $G$ is a connected unicyclic graph with $I(G,x)=I(C_n,x)$, then $G=C_n$ or $G=D_n$.
\end{theorem}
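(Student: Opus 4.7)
My plan is to proceed by strong induction on $n$. The base cases $n\le 5$ are handled by direct enumeration of the few connected unicyclic graphs of small order and comparison of their independence polynomials.

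For the inductive step, let $G$ be a connected unicyclic graph with $I(G,x)=I(C_n,x)$. Reading off $i_1(G)=n$ and $i_2(G)=\binom{n}{2}-n$ confirms that $G$ has $n$ vertices and $n$ edges. If $\delta(G)\ge 2$, then $G$ is $2$-regular and connected, hence $G=C_n$. Otherwise $G$ has a leaf $u$ with neighbour $v$, and $H=G-u$ is a connected unicyclic graph on $n-1$ vertices (removing a leaf preserves both connectivity and the unique cycle).

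The key manipulation combines the vertex recurrence $I(G,x)=I(H,x)+x\,I(G-\{u,v\},x)$ with the cycle identity
\[
I(C_n,x)=I(C_{n-1},x)+x\,I(C_{n-2},x)\qquad(n\ge 4),
\]
which is itself obtained by combining $I(C_n,x)=I(P_{n-1},x)+x\,I(P_{n-3},x)$ with the standard path recurrence. From $I(G,x)=I(C_n,x)$ I obtain
\[
I(H,x)-I(C_{n-1},x)=x\bigl(I(C_{n-2},x)-I(G-\{u,v\},x)\bigr).
\]
The crux is to force both sides to be identically zero, so that $I(H,x)=I(C_{n-1},x)$; the inductive hypothesis then places $H\in\{C_{n-1},D_{n-1}\}$, and a short case analysis of how the leaf $u$ can be attached back at $v$ (in either $C_{n-1}$ or $D_{n-1}$) will show that only the attachment yielding $G=D_n$ preserves $I(G,x)=I(C_n,x)$.

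To force this vanishing I would compare low-degree coefficients on both sides of the displayed identity. A direct count shows the $x^0,x^1,x^2$ coefficients of both sides vanish automatically, and the $x^3$ coefficients give $i_3(H)-i_3(C_{n-1})=2-\deg(v)$. Since $\deg(v)\ge 2$ (else $G$ would split off an isolated edge, contradicting connectivity for $n\ge 3$), this forces $i_3(H)\le i_3(C_{n-1})$; pairing this with the natural lower bound $i_3(H)\ge i_3(C_{n-1})$ for connected unicyclic $H$ on $n-1$ vertices — established by a short edge-swap / subgraph argument invoking Theorem~\ref{thm:subgraphlessthangraph} — pins down $\deg(v)=2$ and $i_3(H)=i_3(C_{n-1})$. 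Iterating the same style of comparison at $i_4$ and beyond reduces the displayed identity to $I(H,x)=I(C_{n-1},x)$ and $I(G-\{u,v\},x)=I(C_{n-2},x)$, after which the inductive hypothesis finishes the argument. The main obstacle will be establishing the needed coefficient inequalities cleanly; everything else is routine use of the recurrences and induction.
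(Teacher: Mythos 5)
First, a note on context: the paper does not prove this statement at all---it is imported from Oboudi's earlier work \cite{Oboudi2018} as a black box---so there is no in-paper argument to measure yours against; your proposal has to stand on its own.

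Its skeleton is reasonable: the recurrences are correct, your $x^3$ computation $i_3(H)-i_3(C_{n-1})=2-\deg(v)$ checks out, and the endgame works ($H=C_{n-1}$ is impossible since reattaching $u$ would force $\deg_G(v)=3$, so $H=D_{n-1}$ and $v$ must be its unique leaf, giving $G=D_n$). The genuine gap is the step that forces both sides of your displayed identity to vanish. What you actually need is the coefficient-wise lemma that $i_k(F)\ge i_k(C_m)$ for \emph{every} connected unicyclic graph $F$ of order $m$ and \emph{every} $k$, applied to $H$ and to $G-\{u,v\}$. You establish (modulo details) only the $k=3$ case, which really is a degree-sequence count ($i_3=\binom{m}{3}-m(m-2)+\sum_v\binom{\deg v}{2}-[\textnormal{girth}=3]$), and then claim $k\ge 4$ follows by ``iterating the same style of comparison.'' It does not: no closed-form degree count exists for $i_4$ and beyond, and the general lemma needs its own induction with case analysis on whether the deleted neighbour lies on the cycle (leaning on the tree analogue $i_k(T)\ge i_k(P_m)$). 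That lemma is the technical heart of the theorem, not a routine afterthought. Moreover, Theorem~\ref{thm:subgraphlessthangraph} cannot supply these inequalities: it concerns the order $\preceq$, which compares polynomial \emph{values} on $[\xi(G),0]$, not coefficients, and in any case $C_{n-1}$ is not a subgraph of a general connected unicyclic $H$. A final caution that the iteration is delicate: the cycle does \emph{not} minimize $i_k$ among arbitrary graphs with $m$ vertices and $m$ edges (e.g.\ $i_3(2K_3)=0<2=i_3(C_6)$), so connectivity and unicyclicity must be tracked carefully at every stage of the induction.
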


Let $U_n$ be the graph on $n$ vertices obtained by attaching $n-3$ vertices with an edge each to one vertex of a triangle, see Figure~\ref{fig:Un}. Note that $I(U_n,x)=(1+2x)(1+x)^{n-3}+x$ as every independent set is either the universal vertex or a subset of the $n-3$ leaves with at most one of the other two vertices in the triangle.

 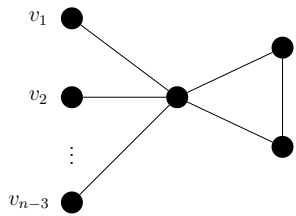
\begin{figure}[htb]
\def\c{0.7}
\def\r{2}
\centering
\scalebox{\c}{
\begin{tikzpicture}
\begin{scope}[every node/.style={circle,thick,draw,fill}]

     \node[label=left:$v_{n-3}$] (13) at (-1*\r,-1*\r) {}; 
     \node[label=left:$v_2$](11) at (-1*\r,0*\r) {};   
    \node[label=left:$v_1$](6) at (-1*\r,0.75*\r) {};
    
    \node(5) at (0*\r,0*\r) {};
\node(4) at (1*\r,sqrt{2/3}*\r) {};
    \node (3) at (1*\r,-sqrt{2/3}*\r) {}; 
      
\end{scope}

\begin{scope}
   
    \path [-] (5) edge node {} (4);
    
    \path [-] (4) edge node {} (3);
    \path [-] (5) edge node {} (3);
    
    \path [-] (6) edge node {} (5);
    \path [-] (5) edge node {} (11);
    \path [-] (5) edge node {} (13);
\end{scope}

\path (11) -- node[auto=false]{\vdots} (13);

\end{tikzpicture}}
\caption{The graph $U_n$}%
\label{fig:Un}%
\end{figure}

\begin{lemma}\label{lem:Tnindepunique}
If $G$ is a graph such that $I(G,x)=I(U_n,x)$, then $G\cong U_n$ or $n=4$ and $G\cong C_4$.
\end{lemma}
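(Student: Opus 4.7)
The plan is to pin down $G$ from $I(G,x)=I(U_n,x)$ by first reading off global invariants and then classifying graphs meeting them.

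Since $(1+2x)(1+x)^{n-3}+x$ has degree $n-2$ with $x$-coefficient $n$ and $x^2$-coefficient $n(n-3)/2$, the graph $G$ must have $n$ vertices, $\alpha(G)=n-2$, and $|E(G)|=\binom{n}{2}-n(n-3)/2=n$. The key leverage is $\alpha(G)=n-2$, which forces the vertex cover number of $G$ to equal exactly $2$: a cover of size at most $1$ would yield at most $n-1<n$ edges.

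Fix a vertex cover $\{u,v\}$ and partition the remaining $n-2$ (necessarily independent) vertices by their adjacencies to $u$ and $v$: let $a,b,c,d$ be the sizes of the classes adjacent to $u$ only, $v$ only, both, and neither, respectively, and set $\varepsilon=1$ if $uv\in E(G)$ and $0$ otherwise. Comparing $|E(G)|=a+b+2c+\varepsilon$ with $n=a+b+c+d+2$ yields the single relation $c+\varepsilon=d+2$, equivalently $a+b+2d=n-4+\varepsilon$. Enumerating independent sets by whether they contain $u$ and/or $v$ gives the closed form
$$I(G,x)=(1+x)^{n-2}+x(1+x)^{a+d}+x(1+x)^{b+d}+(1-\varepsilon)x^{2}(1+x)^{d}.$$
Equating with $I(U_n,x)$, using $(1+x)^{n-2}=(1+x)(1+x)^{n-3}$, and dividing through by $x$ collapses the whole problem to the single polynomial identity
$$(1+x)^{a+d}+(1+x)^{b+d}+(1-\varepsilon)\,x(1+x)^{d}=(1+x)^{n-3}+1.$$

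The conclusion then follows from a short degree argument on this identity. If $\varepsilon=1$, the relation $a+b+2d=n-3$ forces $a+d$ and $b+d$ to be at most $n-3$, so matching the $x^{n-3}$-coefficient on the right (after swapping the roles of $u,v$ if needed) forces $a+d=n-3$ and consequently $b=d=0$, $a=n-3$, $c=1$, which is precisely $U_n$ with $u$ as the central vertex. If $\varepsilon=0$ and $n\ge 5$, then $a+b+2d=n-4$ and each of $a+d$, $b+d$, $d+1$ is at most $n-4$, so the left side has degree strictly less than $n-3$, contradicting the right side. The exceptional case $n=4$ with $\varepsilon=0$ forces $a=b=d=0$ and $c=2$, which builds $C_4$ and accounts for the lone exception. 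The main obstacle is setting up the parametrisation cleanly and verifying that the vertex-cover decomposition captures every graph with $\alpha=n-2$; once the identity above is written down, the casework reduces to transparent leading-coefficient bookkeeping.
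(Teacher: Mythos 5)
Your proof is correct, and it takes a genuinely different route from the paper's. Both arguments begin by extracting the same invariants from $(1+2x)(1+x)^{n-3}+x$ (order $n$, size $n$, $\alpha(G)=n-2$), but the paper additionally uses that the leading coefficient is $2$, so $G$ has exactly two maximum independent sets $A$ and $B$, and then argues combinatorially on $|A\cap B|\in\{n-3,n-4\}$ by direct edge counting; the $n=4$ exception is handled separately by citing the classification of $[C_n]$ (Theorem~\ref{thm:cycleequivclass}) via $U_4=D_4$. You instead pass to the complementary object --- a vertex cover $\{u,v\}$ of size two guaranteed by $\alpha(G)=n-2$ --- parametrize every such graph by $(a,b,c,d,\varepsilon)$, compute $I(G,x)$ in closed form, and reduce the classification to a single polynomial identity settled by degree and leading-coefficient comparison; I have checked the edge count $a+b+2c+\varepsilon=n$, the resulting relation $a+b+2d=n-4+\varepsilon$, the closed form for $I(G,x)$, and the three terminal cases, and they are all sound. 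Your approach costs more setup but buys self-containment: the $C_4$ exception falls out of the case $\varepsilon=0$, $n=4$ rather than requiring the external equivalence-class theorem, and the count of maximum independent sets is never needed. The only (shared) quibble is that the degree argument tacitly assumes $n\ge 4$ so that $(1+x)^{n-3}+1$ has positive degree; for $n=3$ the conclusion $G\cong K_3=U_3$ is immediate from $|V(G)|=|E(G)|=3$, and the paper's proof likewise skips this degenerate case.
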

\begin{proof}
When $n=4$, we are done as $U_4=D_4$, so $[U_4]=\{C_4,U_4\}$ from Theorem~\ref{thm:cycleequivclass}. So suppose $n\ge 5$. We know that $G$ and $U_n$ must have the same number of vertices and edges as every subset of order $1$ is independent and the only subsets of order $2$ which are not independent are edges. Furthermore, since $I(U_n,x)=I(G,x)=(1+2x)(1+x)^{n-3}+x$, $G$ has independence number $n-2$, and $2$ maximum independent sets. Let $A$ and $B$ be the two independent sets of order $n-2$.

\noindent\textbf{Case 1:} $|A\cap B|=n-3$. 

In this case, let $b$ be the unique vertex in $B\setminus A$, $a$ be the unique vertex in $A\setminus B$, and $v$ be the unique vertex in $V(G)\setminus (A\cup B)$. Since $\alpha(G)=n-2$, it follows that $a\sim b$ and the subgraph induced by $A\cup B$ has exactly one edge. Therefore, every other edge in the graph must be incident with $v$. There are $n-1$ options for these edges and there are $n-1$ edges to account for, so $v$ is adjacent to every vertex in $A\cup B$. Therefore $G\cong U_n$.

\noindent\textbf{Case 2:} $|A\cap B|=n-4$. 

In this case, $B\setminus A$ and $A\setminus B$ each contain exactly two vertices and $A\cup B =V(G)$. Since $A$ and $B$ are independent sets, the only edges in the graph are
between vertices in $A\setminus B$ and vertices in $B\setminus A$. Therefore $G$ has at most $4$
edges which is a contradiction as G has $n\ge 5$ edges.
 
\end{proof}

We are now ready to provide the maximum and minimum connected unicyclic graphs with respect to $\preceq$.
\begin{theorem}\label{thm:maxminunicyclic}
If $G$ is a connected unicyclic graph on $n$ vertices such that $G\not\cong C_n$, $G\not\cong D_n$, and $G\not\cong U_n$, then $C_n\prec G\prec U_n.$
\end{theorem}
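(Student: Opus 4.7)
The plan is to establish the two non-strict relations $C_n \preceq G$ and $G \preceq U_n$ separately and then upgrade each to its strict counterpart. With the non-strict versions in hand, Theorem~\ref{thm:cycleequivclass} (giving $[C_n]=\{C_n,D_n\}$) and Lemma~\ref{lem:Tnindepunique} (giving $[U_n]=\{U_n\}$ for $n\ge 5$ and $[U_4]=\{C_4,U_4\}$), combined with the hypothesis $G \not\cong C_n, D_n, U_n$, immediately give $I(G)\ne I(C_n)$ and $I(G)\ne I(U_n)$, promoting each $\preceq$ to $\prec$.

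For the lower bound, the idea is to transform $G$ into $C_n$ by iterating the $\star$-operation. Since $G$ is unicyclic but $G \not\cong C_n$, it is not $2$-regular and therefore contains both a leaf and a vertex of degree at least $3$; Theorem~\ref{thm:boldstaroperation} therefore applies and gives $G^{\star}_{u,v,w} \preceq G$. This operation preserves $|V|$, $|E|$, and connectedness and hence preserves unicyclicity, while the discussion preceding Theorem~\ref{thm:boldstaroperation} observes that it reduces the number of leaves by one. Iterating yields a unicyclic graph on $n$ vertices with no leaves, necessarily $C_n$, and transitivity of $\preceq$ (Theorem~\ref{thm:partialorder}) gives $C_n \preceq G$.

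For the upper bound I would proceed by induction on $n$. For $n \le 4$ the only connected unicyclic graphs are $K_3=U_3$, $C_4$, and $U_4=D_4 \sim C_4$, so the claim is immediate. For the inductive step with $n \ge 5$, handle $G = C_n$ first by passing to $D_n \sim C_n$ (Theorem~\ref{thm:cycleequivclass}), which does have a leaf. So assume $G$ has a leaf $v$ with neighbour $w$ and let $u$ be any pendant of $U_n$, so that $U_n - u \cong U_{n-1}$ and $U_n - N[u] \cong K_2 \cup (n-4)K_1$. I would then apply Theorem~\ref{thm:recursivepartialorder}(i) with $H = G$: its first hypothesis $G - v \preceq U_{n-1}$ is the inductive hypothesis applied to the connected unicyclic graph $G - v$ on $n-1$ vertices, and its second hypothesis $K_2 \cup (n-4)K_1 \preceq G - N[v]$ follows from Theorem~\ref{thm:subgraphlessthangraph}, because $G - N[v] = G - \{v,w\}$ has $n - \deg_G(w) \ge 1$ edges (as $\deg_G(w) \le n-1$) and hence contains $K_2 \cup (n-4)K_1$ as a spanning subgraph.

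The main subtlety is the case $G = C_n$ in the inductive step: since $C_n$ has no leaf, the recursive argument above cannot be applied to $G$ directly, and one must route around this by using $C_n \sim D_n$ to transfer the problem to a graph with a leaf. Beyond that, each remaining ingredient is a direct invocation of Theorems~\ref{thm:boldstaroperation}, \ref{thm:recursivepartialorder}, or~\ref{thm:subgraphlessthangraph}, and the edge-count bound $n - \deg_G(w) \ge 1$ driving the subgraph reduction is trivially true, so no further technical obstacles arise.
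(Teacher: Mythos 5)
Your proposal is correct and follows essentially the same route as the paper: the lower bound by iterating the $\star$-operation to reduce the leaf count down to $C_n$, the upper bound by induction on $n$ via leaf deletion, Theorem~\ref{thm:recursivepartialorder}, and the subgraph comparison $K_2\cup (n-4)K_1\subseteq G-N[v]$, and strictness from Theorem~\ref{thm:cycleequivclass} and Lemma~\ref{lem:Tnindepunique}. The only cosmetic differences are that the paper verifies case-by-case (depending on whether the nearest branch vertex lies on the cycle) that the $\star$-operation keeps the graph connected unicyclic, where you cite the general properties stated before Theorem~\ref{thm:boldstaroperation}, and the paper dispatches the $G\cong C_n$ case of the upper bound using the already-established $C_n\preceq U_n$ rather than passing to $D_n$.
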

\begin{proof}

Let $G$ be a connected unicyclic graph of order $n$. We first show that $C_n\preceq G$. We proceed by induction on the number of leaves in $G$. If $G$ has no leaves, then $G\cong C_n$ so clearly $C_n\preceq G$. Now suppose $C_n\preceq G$ for all connected unicyclic graphs with at most $k\ge 0$ leaves and let $G$ be a connected unicyclic graph with $k+1$ leaves. Since $G$ has at least one leaf, $G$ also contains at least one vertex of degree $3$ or more. Let $u\in V(G)$ be a leaf and let $x\in V(G)$ be the vertex on the cycle of $G$ that is the shortest distance from $u$ (note $\deg(x)\ge 3$). Let $T$ be the tree of $G$ induced by all vertices such that there exists a path from $u$ to using either no vertices from the cycle or only $x$ from the cycle. Let $v\in V(G)$ have degree at least $3$ and be of shortest distance to $u$ among all such vertices in $G$. Note that $v\in V(T)$.

\noindent\textbf{Case 1:} $v=x$.

In this case, let $w$ be a neighbour of $x$ on the cycle. Then, since $G-vw$ is a tree, the graph $G-vw+uw$ is a connected unicyclic graph with one less leaf than $G$.  Thus, by Theorem~\ref{thm:boldstaroperation} and the inductive hypothesis, we have $C_n\preceq G^{\star}_{u,v,w}\preceq G$.

\noindent\textbf{Case 2:} $v\neq x$.

Let $w$ be a neighbour of $v$ with degree at least $2$ that is not on the shortest path between $u$ and $v$. Note that such a $w$ exists and is in $V(T)$, otherwise there is no path in $T$ between $v$ and $x$ which contradicts $G$ being connected. Now the graph $T+uw$ has a unique cycle that includes both $uw$ and $vw$ by our choice of $w$, so the graph $T-vw+uw$ is a tree and since $w$ had degree at least $2$, $T-vw+uw$ has one less leaf than $T$ and therefore $G^{\star}_{u,v,w}$ is a connected unicyclic graph with one less leaf than $G$. Therefore, by Theorem~\ref{thm:boldstaroperation} and the inductive hypothesis we have $C_n\preceq G^{\star}_{u,v,w}\preceq G$.

We now show that $G\preceq U_n$ by induction on $n$.  When $n=3$, the only unicyclic graph is $C_n=U_n$ so there is nothing to show. Suppose $H\preceq U_k$ for all connected unicyclic graphs $H$ of order $k$ and all $3\le k<n$. Now consider the connected unicyclic graph $G$ of order $n$. From above, we may assume $G\not\cong C_n$, so $G$ has at least one leaf. Let $v$ be a leaf of $G$ and $u$ be a leaf of $U_n$. Now, $G-v$ is a connected unicyclic graph of order $n-1$ and $U_n-u=U_{n-1}$ so $G-v\preceq U_n-u$ by the inductive hypothesis. We also have that $G-N[v]$ is a graph of order $n-2$ and $U_n-N[u]=\overline{K_{n-4}}\cup K_2$. It is not difficult to see that for any connected unicyclic graph of order $n$, its maximum degree is at most $n-1$, with equality achieved only for $U_n$. Therefore, there are at most $n-1$ edges incident with vertices in $N[v]$, so $G$ has at least one edge. Hence, $U_n-N[u]$ is a subgraph of $G-N[v]$ and by Theorem~\ref{thm:subgraphlessthangraph}, $U_n-N[u]\preceq G-N[v]$. Thus, by Theorem~\ref{thm:recursivepartialorder}, $G\preceq U_n$.

Therefore, $C_n\preceq G\preceq U_n$ for every connected unicyclic graph $G$. Finally, by Lemma~\ref{lem:Tnindepunique} and Theorem~\ref{thm:cycleequivclass}, we have $C_n\prec G\prec U_n$.
\end{proof}

\begin{corollary}
If $G$ is a connected unicyclic graph of order $n$, then $$\xi(C_n)\le \xi(G)\le \xi(U_n).$$
\end{corollary}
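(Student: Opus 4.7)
The plan is to reduce this corollary directly to Theorem~\ref{thm:maxminunicyclic} by invoking the standard observation, already noted in the introduction, that $H \preceq G$ forces $\xi(H) \le \xi(G)$. Indeed, because $I(G,x) > 0$ on $(\xi(G), 0]$ and $I(H,x) \ge I(G,x)$ on $[\xi(G), 0]$ whenever $H \preceq G$, the polynomial $I(H,x)$ cannot vanish on $(\xi(G), 0]$, so its largest real root $\xi(H)$ must lie at or to the left of $\xi(G)$.

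With that principle in hand, I would split the argument into the three excluded cases of Theorem~\ref{thm:maxminunicyclic} plus the generic case. If $G \cong C_n$ or $G \cong U_n$ there is nothing to prove. If $G \cong D_n$, then by Theorem~\ref{thm:cycleequivclass} we have $I(D_n,x) = I(C_n,x)$, hence $\xi(G) = \xi(C_n)$, and additionally $D_n \preceq U_n$ follows from Theorem~\ref{thm:maxminunicyclic} applied to any other unicyclic comparison (or, more cleanly, from independence equivalence $D_n \sim C_n$ combined with $C_n \preceq U_n$, which we get by applying Theorem~\ref{thm:maxminunicyclic} to any other unicyclic graph and then transitivity via the partial order from Theorem~\ref{thm:partialorder}). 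In all remaining cases, Theorem~\ref{thm:maxminunicyclic} gives $C_n \prec G \prec U_n$, and the implication $\preceq \Rightarrow \xi$-ordering yields $\xi(C_n) \le \xi(G) \le \xi(U_n)$.

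There is no real obstacle here; the corollary is essentially a translation of the $\preceq$-statement into a statement about the smallest-modulus root. The only minor care needed is the handling of $D_n$, which is not covered by Theorem~\ref{thm:maxminunicyclic} directly but is resolved by the fact that independence-equivalent graphs share all roots, in particular $\xi$. So the entire proof amounts to: quote Theorem~\ref{thm:maxminunicyclic}, dispose of $C_n$, $U_n$, $D_n$ by inspection or via Theorem~\ref{thm:cycleequivclass}, and apply the $\preceq \Rightarrow \xi$ implication to conclude.
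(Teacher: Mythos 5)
Your proof is correct and is exactly the argument the paper intends: the corollary is an immediate consequence of Theorem~\ref{thm:maxminunicyclic} together with the observation from the introduction that $H\preceq G$ implies $\xi(H)\le\xi(G)$, with the excluded graphs $C_n$, $U_n$, $D_n$ handled trivially (the last via $I(D_n,x)=I(C_n,x)$). The extra care you take with $D_n$ is appropriate but does not constitute a different route.
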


\section{Well-covered trees and unicyclic graphs}\label{sec:wcunicyclic}

A graph on $n$ vertices is called \textit{well-covered} if all of its maximal independent sets have the same size. It is called \textit{very well-covered} if all maximal independent sets are of size $n/2$. One construction that always produces a very well-covered graph is the graph star operation. Let $G$ be any graph. Form $G^\ast$, the {\it graph star} of $G$ (sometimes also called the {\it corona} of $G$ with $K_{1}$) from $G$ by attaching, for each vertex $v$ of $G$ a new vertex $v^\ast$ to $v$ with an edge (an edge incident with a leaf is called a \textit{pendant edge}), see Figure~\ref{fig:Pnstar}. It is not difficult to see that the graph $G^{\ast}$ is very well-covered for any graph $G$. As might be expected, independence polynomials of (very) well-covered graphs have been of particular interest. Given the restricted structure of their independent sets, well-covered graphs can have independence polynomials that behave quite differently from those of general graphs. For example, among all graphs of order $n$, the maximum modulus of an independence root is $O(3^{\frac{n}{3}})$ \cite{BrownCameron2020}, but among all well-covered graphs of order $n$, all independence roots are contained in the disk $|z|< n$ \cite{BDN2000}. 

\begin{figure}[htp]
\def\c{0.7}
\def\r{2}
\centering
\scalebox{\c}{
\begin{tikzpicture}
\begin{scope}[every node/.style={circle,thick,fill,draw}]
    \node[label=above:$u_1$,draw] (1) at (0*\r,0*\r) {};
    \node[label=above:$u_2$,draw] (2) at (1*\r,0*\r) {};
    \node[label=above:$u_3$,draw] (3) at (2*\r,0*\r) {};
    \node[label=above:$u_{n-1}$,draw] (4) at (3*\r,0*\r) {};
    \node[label=above:$u_n$,draw] (5) at (4*\r,0*\r) {};
    \node[label=below:$u_1^{\ast}$,draw] (6) at (0*\r,-1*\r) {};
    \node[label=below:$u_2^{\ast}$,draw] (7) at (1*\r,-1*\r) {};
    \node[label=below:$u_3^{\ast}$,draw] (8) at (2*\r,-1*\r) {};
    \node[label=below:$u_{n-1}^{\ast}$,draw] (9) at (3*\r,-1*\r) {};
    \node[label=below:$u_n^{\ast}$,draw] (10) at (4*\r,-1*\r) {};    
\end{scope}

\begin{scope}
    \path [-] (1) edge node {} (2);
    \path [-] (2) edge node {} (3);
    \path [-] (4) edge node {} (5);
    
    \path [-] (1) edge node {} (6);
    \path [-] (2) edge node {} (7);
    \path [-] (3) edge node {} (8);
    \path [-] (4) edge node {} (9);
    \path [-] (5) edge node {} (10);
    
\end{scope}

\path (3) -- node[auto=false]{\ldots} (4);
\path [-] (3) edge node {} (2*\r+.7,0*\r) ;
\path [-] (3*\r-.7,0*\r) edge node {} (4);
\end{tikzpicture}}
\caption{$P_n^{\ast}$}%
\label{fig:Pnstar}%
\end{figure}

\subsection{Well-covered trees}

Well-covered trees have a particularly nice characterization in terms of their leaves that we will make use of.

\begin{theorem}[\cite{nowawcgirth}]\label{thm:wctg6}
Let $G$ be a connected graph with girth at least $6$ which is isomorphic to neither $C_7$ nor $K_1$. Then $G$ is well-covered if and only if its pendent edges form a perfect matching.
\end{theorem}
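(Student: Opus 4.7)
The proof splits into two directions: the easy forward implication and the substantive backward one.

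\emph{Forward direction.} Assume the pendant edges of $G$ form a perfect matching, partitioning $V(G)$ into pairs $\{u_i, v_i\}$ where each $u_i$ is a leaf adjacent only to $v_i$. For any maximal independent set $S$, maximality forbids $S$ from missing both $u_i$ and $v_i$ (otherwise $u_i$ could be added), while independence forbids $\{u_i, v_i\} \subseteq S$. Hence $|S \cap \{u_i, v_i\}| = 1$ for every pair, giving $|S| = n/2$ independent of $S$, so $G$ is well-covered.

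\emph{Backward direction.} Assume $G$ is well-covered, connected, of girth at least $6$, and not isomorphic to $K_1$ or $C_7$. The plan is to prove three structural claims, which together force each vertex of $G$ to lie in a unique pendant edge: (a) $G$ has at least one leaf; (b) no vertex of $G$ has two leaf neighbors; (c) every non-leaf vertex of $G$ is adjacent to a leaf.

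Claim (b) is the cleanest. Suppose $v$ has two leaf neighbors $u_1, u_2$. Let $T^*$ be a maximum independent set of $G - \{v, u_1, u_2\}$ and $T^{**}$ a maximum independent set of $G - N[v]$. One checks readily that $\{u_1, u_2\} \cup T^*$ and $\{v\} \cup T^{**}$ are both maximal independent sets of $G$ (the first uses that $u_1, u_2$ are leaves adjacent only to $v \notin T^*$; the second uses that $T^{**}$ dominates $V(G) \setminus N[v]$), of sizes $|T^*| + 2$ and $|T^{**}| + 1$. Well-coveredness forces $\alpha(G - N[v]) = \alpha(G - \{v, u_1, u_2\}) + 1$, contradicting the induced-subgraph inequality $\alpha(G - N[v]) \le \alpha(G - \{v, u_1, u_2\})$.

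Claim (a) is the hardest step: it amounts to showing that a well-covered connected graph of girth at least $6$, minimum degree at least $2$, and distinct from $K_1$ is isomorphic to $C_7$. I would pick any vertex $v$ and exploit the girth bound three times: $N(v)$ is independent (no triangles), distinct neighbors of $v$ have pairwise disjoint off-$v$ neighborhoods (no $C_4$), and choosing one ``private'' second neighbor $w_i'$ per $w_i \in N(v)$ yields an independent set $Y$ of size $\deg(v)$ disjoint from $N[v]$ (no $C_5$). Comparing the sizes of maximal independent sets extending $\{v\}$, $N(v)$, and $\{v\} \cup Y$ via well-coveredness pins the degrees of $v$ and each $w_i$ down to $2$; propagating this local $2$-regularity around $G$ using connectedness forces $G$ to be a single cycle, and among cycles of girth at least $6$ only $C_7$ is well-covered. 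Claim (c) follows by an analogous three-set comparison: if $v$ has no leaf neighbor then each $w_i \in N(v)$ has degree at least $2$, the girth bound again provides an independent set $Y$ of private second neighbors, and contradicting sizes emerge when comparing maximal independent sets extending $\{v\}$, $N(v)$, and $\{v\} \cup Y$ against induced-subgraph inequalities on $\alpha(G - N[v])$, $\alpha(G - N[N(v)])$, and $\alpha(G - N[\{v\} \cup Y])$. The main obstacles are claims (a) and (c), where the difficulty is coaxing the girth hypothesis into forcing three maximal independent set sizes to conflict simultaneously.
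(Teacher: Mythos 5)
The paper does not prove this statement: it is quoted verbatim from Finbow, Hartnell and Nowakowski (the reference attached to the theorem), so there is no internal proof to compare routes with; your attempt has to stand on its own. Your forward direction is correct and complete, and your Claim (b) is a correct, standard argument (note it uses no girth hypothesis at all). The genuine gap is in Claims (a) and (c), which you yourself flag as the hard steps: the mechanism you propose there cannot produce a contradiction. For any independent set $X$ in a well-covered graph $G$, every maximal independent set of $G-N[X]$ extends $X$ to a maximal independent set of $G$, so well-coveredness gives the identity $\alpha(G)=|X|+\alpha(G-N[X])$ \emph{automatically}. Applied to your three sets $\{v\}$, $N(v)$ and $\{v\}\cup Y$ this yields $\alpha(G-N[v])=\alpha(G)-1$, $\alpha(G-N[N(v)])=\alpha(G)-\deg(v)$ and $\alpha(G-N[\{v\}\cup Y])=\alpha(G)-1-\deg(v)$; the only induced-subgraph containments available, namely $N[v]\subseteq N[N(v)]$ and $N[v]\subseteq N[\{v\}\cup Y]$, translate into the vacuous inequalities $\deg(v)\ge 1$ and $\deg(v)\ge 0$, and the sets $G-N[N(v)]$ and $G-N[\{v\}\cup Y]$ are not nested, so no further comparison is forced. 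Nothing here pins $\deg(v)$ down to $2$. Your Claim (b) succeeds for a structurally different reason: there you have two independent sets $\{u_1,u_2\}$ and $\{v\}$ with $N[\{u_1,u_2\}]\subsetneq N[v]$ but $|\{u_1,u_2\}|>|\{v\}|$, which is exactly the configuration that makes the identity above clash with monotonicity of $\alpha$ under induced subgraphs. Your choice of $Y$ produces no analogous configuration.

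Consequently the substantive content of the theorem --- that $C_7$ is the only connected well-covered graph of girth at least $6$ with minimum degree at least $2$, and more generally Claim (c) for graphs that do have leaves --- is not established. This is genuinely the hard part of the Finbow--Hartnell--Nowakowski result; their proof is a lengthy structural analysis (examining $G-N[x]$ for carefully chosen $x$ and tracking how well-coveredness and the girth bound propagate), not a single local counting step, and there is no reason to expect a three-line replacement. A minor further omission: even granting (a), (b) and (c), you should say explicitly why they imply that the pendant edges form a perfect matching (each leaf lies in exactly one pendant edge since it has only one incident edge; each non-leaf vertex lies in at least one by (c) and at most one by (b)), but that is cosmetic next to the gap above.
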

\begin{corollary}\label{cor:wctreesaregraphstars}
A tree $T$ with at least $2$ vertices is well-covered if and only if $T=T'^{\ast}$ for some tree $T'$.
\end{corollary}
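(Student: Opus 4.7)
The plan is to apply Theorem~\ref{thm:wctg6} directly. A tree has no cycles, so its girth is vacuously at least $6$; the hypothesis $|V(T)|\geq 2$ rules out $K_1$, and $C_7$ is not a tree. Hence Theorem~\ref{thm:wctg6} applies and tells us that $T$ is well-covered if and only if its pendant edges form a perfect matching. The remaining task is to show that this matching condition is equivalent to $T=T'^{\ast}$ for some tree $T'$.

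For the forward direction, given a perfect matching $M$ of pendant edges in $T$, I would construct $T'$ as the subgraph of $T$ induced by its non-leaf vertices, handling the case $T=K_2=K_1^{\ast}$ separately. When $T\neq K_2$, three observations do the work: (i) each pendant edge in $M$ has exactly one leaf endpoint and one non-leaf endpoint, since two leaves joined by an edge would form an isolated $K_2$ component of $T$; (ii) each non-leaf vertex has exactly one leaf neighbor in $T$, since it is covered by $M$ exactly once and any additional leaf neighbor (whose unique neighbor is already used) could not itself be matched; and (iii) the subgraph $T'$ on non-leaves is connected, since the unique $T$-path between two non-leaves cannot use a degree-$1$ vertex as an interior vertex. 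Together these yield that $T'$ is a tree and that $T=T'^{\ast}$.

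For the reverse direction, given $T=T'^{\ast}$, I would note that the leaves of $T$ are exactly the attached vertices $v^{\ast}$: every vertex of $T'$ has degree at least $2$ in $T'^{\ast}$, except in the trivial case $T'=K_1$, where $T=K_2$ is well-covered directly. Consequently the pendant edges of $T$ are precisely $\{vv^{\ast}:v\in V(T')\}$, which form a perfect matching by construction, so $T$ is well-covered by Theorem~\ref{thm:wctg6}. The only real obstacle is the bookkeeping around the boundary cases $T=K_2$ and $T'=K_1$, where the set of non-leaves of $T$ is empty; once these are peeled off, the corollary reduces to a direct application of the characterization in Theorem~\ref{thm:wctg6}.
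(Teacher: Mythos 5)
Your proof is correct and follows exactly the route the paper intends: the corollary is stated in the paper without an explicit proof precisely because it is meant to be read off from Theorem~\ref{thm:wctg6} via the observation that a tree has girth (vacuously) at least $6$ and that the perfect-matching-of-pendant-edges condition is equivalent to being a graph star of a tree. Your careful handling of the equivalence and of the boundary cases $T=K_2$ and $T'=K_1$ supplies the details the paper leaves implicit.
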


The independence polynomial of $G^{\ast}$ can be expressed in terms of the independence polynomial of $G$ by the following result.

\begin{theorem}[\cite{Levit2008}]\label{thm:stargraphformula}
If $G$ is a graph of order $n$, then $$I(G^{\ast},x)=(1+x)^nI\left( G,\frac{x}{1+x}\right).$$
\end{theorem}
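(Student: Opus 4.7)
The plan is to prove the identity by directly counting independent sets of $G^{\ast}$ according to how they intersect the original vertex set $V(G)$. Recall that $V(G^{\ast})=V(G)\cup V^{\ast}$, where $V^{\ast}=\{v^{\ast}:v\in V(G)\}$ and each $v^{\ast}$ is adjacent in $G^{\ast}$ only to $v$. Thus, for any independent set $S$ of $G^{\ast}$, setting $A=S\cap V(G)$ forces $A$ to be an independent set of $G$, while the only restriction on $S\cap V^{\ast}$ is that it cannot contain $v^{\ast}$ whenever $v\in A$. In other words, $S\cap V^{\ast}$ may be chosen to be an arbitrary subset of $\{v^{\ast}:v\in V(G)\setminus A\}$, a set of size $n-|A|$.

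First I would use this observation to express the independence numbers $i_{k}(G^{\ast})$ in terms of those of $G$. Specifically, grouping independent sets of $G^{\ast}$ of size $k$ by $j=|A|$, I get
\[
i_{k}(G^{\ast})=\sum_{j=0}^{k}i_{j}(G)\binom{n-j}{k-j},
\]
since each independent set of $G$ of size $j$ extends in exactly $\binom{n-j}{k-j}$ ways to an independent set of $G^{\ast}$ of size $k$ by choosing which $k-j$ of the available starred vertices to include.

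Next I would substitute this into the definition of $I(G^{\ast},x)$ and swap the order of summation:
\[
I(G^{\ast},x)=\sum_{k\ge 0}\sum_{j=0}^{k}i_{j}(G)\binom{n-j}{k-j}x^{k}=\sum_{j\ge 0}i_{j}(G)x^{j}\sum_{\ell\ge 0}\binom{n-j}{\ell}x^{\ell},
\]
where $\ell=k-j$. By the binomial theorem the inner sum equals $(1+x)^{n-j}$, so
\[
I(G^{\ast},x)=\sum_{j\ge 0}i_{j}(G)x^{j}(1+x)^{n-j}=(1+x)^{n}\sum_{j\ge 0}i_{j}(G)\left(\frac{x}{1+x}\right)^{j}=(1+x)^{n}I\!\left(G,\frac{x}{1+x}\right),
\]
which is the claimed identity.

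There is no real obstacle here; the argument is essentially a bijective/counting proof together with the binomial theorem. The only point requiring a little care is verifying the bijection between pairs $(A,B)$ with $A\in \mathcal{I}(G)$ and $B\subseteq \{v^{\ast}:v\notin A\}$ and independent sets of $G^{\ast}$, which follows immediately from the fact that the only non-$G$ edges of $G^{\ast}$ are the pendant edges $vv^{\ast}$.
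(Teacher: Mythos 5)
Your proof is correct. The paper cites this identity from Levit and Mandrescu without reproducing a proof, and your argument — decomposing an independent set of $G^{\ast}$ into its trace $A$ on $V(G)$ plus an arbitrary subset of the $n-|A|$ starred vertices with partners outside $A$, then summing via the binomial theorem — is exactly the standard derivation one would expect for this result.
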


We now have a helpful reduction when trying to determine when two graph stars are independence equivalent.

\begin{prop}[\cite{Levit2008}]\label{prop:graphstarsequal}
Let $G$ and $H$ be graphs. Then $I(G,x)=I(H,x)$ if and only if $I(G^{\ast},x)=I(H^{\ast},x)$.
\end{prop}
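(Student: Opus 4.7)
The plan is to use Theorem~\ref{thm:stargraphformula} as a near-bijection between the independence polynomial of $G$ and that of $G^\ast$. A preliminary observation is that $|V(G)|$ equals the coefficient of $x$ in $I(G,x)$ (since every singleton is independent), which lets us recover the common order on either side of the equivalence.

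For the forward direction, if $I(G,x)=I(H,x)$ then in particular $|V(G)|=|V(H)|=n$, so applying Theorem~\ref{thm:stargraphformula} to each side gives
\[I(G^\ast,x)=(1+x)^n I\!\left(G,\tfrac{x}{1+x}\right)=(1+x)^n I\!\left(H,\tfrac{x}{1+x}\right)=I(H^\ast,x).\]

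For the reverse direction, suppose $I(G^\ast,x)=I(H^\ast,x)$. Reading off the coefficient of $x$ gives $|V(G^\ast)|=|V(H^\ast)|$, hence $|V(G)|=|V(H)|=n$. Theorem~\ref{thm:stargraphformula} applied to both sides then yields the polynomial identity
\[\sum_{k=0}^{n}i_k(G)\,x^k(1+x)^{n-k}=\sum_{k=0}^{n}i_k(H)\,x^k(1+x)^{n-k}\]
in $\mathbb{R}[x]$, and the remaining task is to extract $i_k(G)=i_k(H)$ for every $k$.

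The main obstacle is this last extraction. My plan is to observe that the polynomials $\{x^k(1+x)^{n-k}:0\le k\le n\}$ are linearly independent in $\mathbb{R}[x]$, because $x^k(1+x)^{n-k}$ vanishes at $x=0$ to order exactly $k$, so their $x$-adic valuations are pairwise distinct. Matching coefficients in the displayed identity then forces $i_k(G)=i_k(H)$ for every $k$, whence $I(G,x)=I(H,x)$. A slicker alternative I would mention is to invert the substitution $y=x/(1+x)$ via $x=y/(1-y)$: dividing the original identity through by $(1+x)^n$ in the field of rational functions and substituting produces $I(G,y)=I(H,y)$ on the nose, again using only that both sides are polynomials so the rational-function equality is genuine.
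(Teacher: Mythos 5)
Your proof is correct. Note that the paper itself offers no proof of this proposition --- it is quoted from the cited reference --- so there is nothing internal to compare against; your argument is a clean, self-contained derivation from Theorem~\ref{thm:stargraphformula}. Both directions are handled properly: the preliminary observation that the coefficient of $x$ in an independence polynomial recovers the order of the graph is exactly what is needed to legitimize applying the formula with a common exponent $n$ on both sides (and, in the reverse direction, $|V(G^{\ast})| = 2|V(G)|$ gives $|V(G)| = |V(H)|$). For the extraction step, your linear-independence argument is sound: the polynomials $x^k(1+x)^{n-k}$ have pairwise distinct orders of vanishing at $x=0$, so any vanishing linear combination must have all coefficients zero (look at the least index with a nonzero coefficient). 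The alternative you sketch is even more direct and is the standard route: from $(1+x)^n I\bigl(G,\tfrac{x}{1+x}\bigr) = (1+x)^n I\bigl(H,\tfrac{x}{1+x}\bigr)$ one cancels $(1+x)^n$ away from $x=-1$ and notes that $x \mapsto x/(1+x)$ hits infinitely many values, so the polynomials $I(G,y)$ and $I(H,y)$ agree at infinitely many points and hence are identical. Either route completes the proof.
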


The real power of Theorem~\ref{thm:stargraphformula}, however, is the relationship it describes between the independence roots of $G^{\ast}$ and $G$.

\begin{lemma}\label{lem:relationresptostar}
Let $G$ and $H$ be graphs of order $n$. Then $H\preceq G$ (resp. $H\prec G$) if and only if $H^{\ast}\preceq G^{\ast}$ (resp. $H^{\ast}\prec G^{\ast}$).
\end{lemma}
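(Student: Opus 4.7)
The plan is to exploit the identity in Theorem~\ref{thm:stargraphformula} through the M\"obius substitution $\phi(x)=x/(1+x)$, whose inverse is $\phi^{-1}(y)=y/(1-y)$. The map $\phi$ is a strictly increasing continuous bijection from $(-1,\infty)$ onto $(-\infty,1)$ that fixes $0$; in particular, it carries $(-1,0]$ monotonically onto $(-\infty,0]$.

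The crux is to show that $\phi$ takes $[\xi(G^{\ast}),0]$ bijectively onto $[\xi(G),0]$; equivalently, that $\phi(\xi(G^{\ast}))=\xi(G)$. Since $\xi(G)\in[-1,0)$, the value $x_0:=\phi^{-1}(\xi(G))=\xi(G)/(1-\xi(G))$ lies in $[-1/2,0)$, and Theorem~\ref{thm:stargraphformula} gives
\[
I(G^{\ast},x_0)=(1+x_0)^{n}I(G,\xi(G))=0,
\]
so $x_0$ is a real root of $I(G^{\ast},\cdot)$. Because $\xi(G^{\ast})$ is the root of $I(G^{\ast},\cdot)$ of smallest modulus and is real, $|\xi(G^{\ast})|\le|x_0|<1$; in particular $\xi(G^{\ast})\in(-1,0)$ and $(1+x)^n>0$ for every $x\in[\xi(G^{\ast}),0]$. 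Since $\xi(G^{\ast})$ and $x_0$ are both negative, $\xi(G^{\ast})\ge x_0$, hence $\phi(\xi(G^{\ast}))\ge\xi(G)$ by monotonicity. For the reverse inequality, any real root $r$ of $I(G^{\ast},\cdot)$ in $(-1,0)$ satisfies $I(G,\phi(r))=0$ by the same formula, so $\phi(r)$ is a negative real root of $I(G,\cdot)$ and therefore $\phi(r)\le\xi(G)$, using that $\xi(G)$ is the largest (i.e.\ smallest modulus) negative real root of $I(G,\cdot)$. Applying this to $r=\xi(G^{\ast})$ yields $\phi(\xi(G^{\ast}))\le\xi(G)$, and the two bounds combine to $\phi(\xi(G^{\ast}))=\xi(G)$.

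With the interval correspondence in hand, both implications are immediate from the formula. If $H\preceq G$ and $x\in[\xi(G^{\ast}),0]$, put $y=\phi(x)\in[\xi(G),0]$; the hypothesis gives $I(H,y)\ge I(G,y)$, and multiplying by the positive quantity $(1+x)^n$ and invoking Theorem~\ref{thm:stargraphformula} on each side yields $I(H^{\ast},x)\ge I(G^{\ast},x)$, so $H^{\ast}\preceq G^{\ast}$. Conversely, if $H^{\ast}\preceq G^{\ast}$ and $y\in[\xi(G),0]$, set $x=\phi^{-1}(y)\in[\xi(G^{\ast}),0]$ and divide the inequality $I(H^{\ast},x)\ge I(G^{\ast},x)$ by $(1+x)^n>0$ to obtain $I(H,y)\ge I(G,y)$. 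For the strict versions, Proposition~\ref{prop:graphstarsequal} says $I(H,x)\ne I(G,x)$ if and only if $I(H^{\ast},x)\ne I(G^{\ast},x)$, so $H\prec G\iff H^{\ast}\prec G^{\ast}$.

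The main obstacle is the identification $\phi(\xi(G^{\ast}))=\xi(G)$: one must rule out the degenerate case $\xi(G^{\ast})=-1$ (where the substitution blows up) and confirm that monotonicity of $\phi$ genuinely transports the smallest-modulus negative real root from one polynomial to the other. Both follow from the a priori bound $|\xi(G^{\ast})|\le 1/2$ produced above. Once that identification is in place, the rest of the argument is a single substitution in each direction.
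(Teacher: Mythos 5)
Your proof is correct and follows essentially the same route as the paper: apply Theorem~\ref{thm:stargraphformula}, show the substitution $x\mapsto x/(1+x)$ carries $[\xi(G^{\ast}),0]$ onto $[\xi(G),0]$ with $(1+x)^n>0$ throughout, and invoke Proposition~\ref{prop:graphstarsequal} for the strict version. The only (harmless) difference is that you justify $\xi(G^{\ast})=\xi(G)/(1-\xi(G))$ by a two-sided inequality on the extremal root, whereas the paper reads it off from the full correspondence between the root sets together with monotonicity of $r\mapsto r/(1-r)$.
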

\begin{proof}
Note that from Theorem~\ref{thm:stargraphformula} we know that the roots of $I(G^{\ast},x)$ are $\tfrac{r}{1-r}$ for all roots $r$ of $I(G,x)$ along with $-1$ with multiplicity $n-\alpha(G)$. Note that if $r_1< r_2 <0$, then $\tfrac{r_1}{1-r_1}<\tfrac{r_2}{1-r_2}$. Therefore as $\xi(G)\geq-1$ it follows that $\xi(G^{\ast})=\tfrac{\xi(G)}{1-\xi(G)}\geq -\tfrac{1}{2}$. Furthermore $x\in [\xi(G^{\ast}),0]$ if and only if $\tfrac{x}{1+x}\in [\xi(G),0]$. Since $\xi(G^{\ast})\geq -\tfrac{1}{2}$, it follows that $(1+x)^n>0$ for all $x\in [\xi(G^{\ast}),0]$. Therefore, $I(H,\tfrac{x}{1+x})\ge I(G,\tfrac{x}{1+x})$ for all $\tfrac{x}{1+x}\in [\xi(G),0]$ if and only if $(1+x)^nI(H,\tfrac{x}{1+x})\ge (1+x)^nI(G,\tfrac{x}{1+x})$ for all $x\in [\xi(G^{\ast}),0]$. Since $I(G^{\ast},x)=(1+x)^nI\left( G,\frac{x}{1+x}\right)$ and $I(H^\ast,x)=(1+x)^nI\left( H,\frac{x}{1+x}\right)$ by Theorem~\ref{thm:stargraphformula}, it follows that $H\preceq G$ if and only if $H^{\ast}\preceq G^{\ast}$. From this and Proposition~\ref{prop:graphstarsequal} it follows that $H\prec G$ if and only if $H^{\ast}\prec G^{\ast}$.
\end{proof}

From Lemma~\ref{lem:relationresptostar} and Theorem \ref{thm:wctg6}, the function $f(T)=T^{\ast}$ between the set of all trees of order $n$ and the set of all well-covered trees of order $2n$ is a bijection that preserves $\preceq$. 

\begin{theorem}[\cite{Oboudi2018treeroots}]\label{thm:treebounds}
If $G$ is a tree of order $n$ not equal to $P_n$ or $S_n$, then $P_n\prec T\prec S_n$.
\end{theorem}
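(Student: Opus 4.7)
The plan is to mirror the two-pronged strategy used for Theorem~\ref{thm:maxminunicyclic}, handling the two bounds separately. For $P_n\preceq T$ I would induct on the number of leaves of $T$, shrinking the leaf count by iterating the $G^{\star}_{u,v,w}$ operation. For $T\preceq S_n$ I would induct on $n$, applying the vertex-deletion criterion of Theorem~\ref{thm:recursivepartialorder}(i). Strictness will then be extracted separately from simple facts about the independence polynomials of $P_n$ and $S_n$.

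For the lower bound, the base case is when $T$ has at most two leaves, which forces $T\cong P_n$. In the inductive step suppose $T$ has $k+1\ge 3$ leaves, so $\Delta(T)\ge 3$ and $\delta(T)=1$. Pick a leaf $u$, let $v$ be the vertex of degree at least $3$ closest to $u$, and let $w$ be a neighbor of $v$ not on the $u$-to-$v$ path. Because $T$ is a tree, deleting $vw$ splits $T$ into two subtrees---one containing $u$, the other containing $w$---so $G^{\star}_{u,v,w}=T-vw+uw$ is again a tree. Moreover $u$ ceases to be a leaf (its degree becomes $2$) while every other vertex's leaf status is unchanged, so $G^{\star}_{u,v,w}$ is a tree of order $n$ with exactly $k$ leaves. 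Theorem~\ref{thm:boldstaroperation} gives $G^{\star}_{u,v,w}\preceq T$, and the inductive hypothesis gives $P_n\preceq G^{\star}_{u,v,w}$, so $P_n\preceq T$.

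For the upper bound I induct on $n$; for $n\le 4$ the only trees are $P_n$ and $S_n$, so the statement is vacuous. For the inductive step assume $T\not\cong S_n$, pick any leaf $v$ of $T$ and any leaf $u$ of $S_n$. Then $T-v$ is a tree of order $n-1$ and $S_n-u\cong S_{n-1}$, so $T-v\preceq S_n-u$ by induction. Also $S_n-N[u]=\overline{K_{n-2}}$, which is either equal to or a proper subgraph of the forest $T-N[v]$ on $n-2$ vertices, so $S_n-N[u]\preceq T-N[v]$ by reflexivity or by Theorem~\ref{thm:subgraphlessthangraph}. Theorem~\ref{thm:recursivepartialorder}(i) then yields $T\preceq S_n$.

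For the strict inequalities, if $T\not\cong P_n$ then the result of Li et al.\ \cite{Li2016} that $P_n$ is independence unique among all connected graphs gives $I(T,x)\neq I(P_n,x)$, so $P_n\prec T$. If $T\not\cong S_n$, then no single vertex of $T$ is incident with every edge, which forces $\alpha(T)\le n-2<n-1=\alpha(S_n)$; hence $I(T,x)$ and $I(S_n,x)$ have different degrees and $T\prec S_n$. The main technical obstacle is the bookkeeping in the $G^{\star}$ reduction---verifying that the operation keeps one inside the class of trees of order $n$ while strictly reducing the leaf count---but once this is in hand, the rest of the argument is driven by the machinery already stated.
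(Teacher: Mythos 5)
Your argument is correct, and it is essentially the same approach the paper itself deploys for the unicyclic analogue, Theorem~\ref{thm:maxminunicyclic} (the paper only cites Theorem~\ref{thm:treebounds} from Oboudi without reproving it): induction on the leaf count via the $G^{\star}_{u,v,w}$ operation and Theorem~\ref{thm:boldstaroperation} for the lower bound, and induction on $n$ via Theorem~\ref{thm:recursivepartialorder}(i) together with the subgraph comparison of Theorem~\ref{thm:subgraphlessthangraph} for the upper bound. Your strictness arguments (independence uniqueness of $P_n$ from Li et al., and the drop in independence number for $T\not\cong S_n$) are also valid.
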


\begin{corollary}\label{cor:wctreebounds}
Let $T$ be a well-covered tree of order $2n$ such that $T\neq P_{n}^{\ast}$ and $T\neq S_{n}^{\ast}$. Then
$P_{n}^{\ast}\prec T\prec S_{n}^{\ast}.$
\end{corollary}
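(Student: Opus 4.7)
The plan is to reduce Corollary~\ref{cor:wctreebounds} directly to Theorem~\ref{thm:treebounds} via the bijection between trees of order $n$ and well-covered trees of order $2n$ noted immediately after Lemma~\ref{lem:relationresptostar}. First, by Corollary~\ref{cor:wctreesaregraphstars}, any well-covered tree $T$ of order $2n$ (with $n\geq 1$) can be written as $T = T'^{\ast}$ for some tree $T'$, and comparing orders forces $|V(T')|=n$. The tree $T'$ is in fact uniquely determined by $T$ as an unlabelled graph, since Theorem~\ref{thm:wctg6} tells us that the pendant edges of $T$ form the unique perfect matching consisting of pendants, and removing the pendant leaves recovers $T'$. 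Consequently, the hypothesis $T\neq P_n^{\ast}$ forces $T'\neq P_n$, and $T\neq S_n^{\ast}$ forces $T'\neq S_n$.

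Second, with $T'$ a tree of order $n$ satisfying $T'\neq P_n$ and $T'\neq S_n$, Theorem~\ref{thm:treebounds} immediately yields
\[
P_n \prec T' \prec S_n.
\]
Third, $P_n$, $T'$, and $S_n$ all have the same order $n$, so Lemma~\ref{lem:relationresptostar} applies to each of the pairs $(P_n,T')$ and $(T',S_n)$ and converts each strict inequality to the corresponding strict inequality between graph stars:
\[
P_n^{\ast} \prec T'^{\ast} \prec S_n^{\ast}.
\]
Since $T'^{\ast}=T$, this is precisely the desired conclusion $P_n^{\ast}\prec T \prec S_n^{\ast}$.

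There is essentially no obstacle here, as all the machinery is in place. The only subtlety to keep track of is that the star operation on trees is injective up to isomorphism (so that the hypotheses on $T$ faithfully translate to hypotheses on $T'$), and that Lemma~\ref{lem:relationresptostar} requires the two graphs being compared to have equal order — both of which hold automatically from the setup above.
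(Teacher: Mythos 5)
Your proof is correct and follows exactly the route the paper intends: the paper's own justification is the remark preceding Theorem~\ref{thm:treebounds} that $T\mapsto T^{\ast}$ is a $\preceq$-preserving bijection between trees of order $n$ and well-covered trees of order $2n$ (via Corollary~\ref{cor:wctreesaregraphstars} and Lemma~\ref{lem:relationresptostar}), combined with Theorem~\ref{thm:treebounds}. Your additional care in noting that $T'$ is recovered uniquely from $T$ (so the hypotheses transfer faithfully) is a worthwhile detail the paper leaves implicit.
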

\begin{corollary}
If $G$ is a well-covered tree of order $2n$, then 
$$\xi(P_n^{\ast})\le\xi(G)\le\xi(S_n^{\ast}).$$
\end{corollary}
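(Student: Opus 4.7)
The plan is to derive this corollary as an immediate consequence of Corollary~\ref{cor:wctreebounds}, combined with the monotonicity observation recorded in Section~\ref{sec:intro} that $H\preceq G$ implies $\xi(H)\le\xi(G)$ (which followed from the fact that $I(G,x)>0$ on $(\xi(G),0]$). First, I would observe that by Corollary~\ref{cor:wctreesaregraphstars}, every well-covered tree on $2n$ vertices has the form $T'^{\ast}$ for some tree $T'$ of order $n$, so the hypothesis of Corollary~\ref{cor:wctreebounds} covers every $G$ under consideration. If $G$ is not isomorphic to either $P_n^{\ast}$ or $S_n^{\ast}$, then Corollary~\ref{cor:wctreebounds} yields the strict chain $P_n^{\ast}\prec G\prec S_n^{\ast}$; and if $G$ is one of these two extremal graphs, one of the inequalities is an equality and the other reduces to the comparison $P_n^{\ast}\preceq S_n^{\ast}$, which follows from Theorem~\ref{thm:treebounds} together with Lemma~\ref{lem:relationresptostar}.

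In all cases we obtain $P_n^{\ast}\preceq G\preceq S_n^{\ast}$. Applying the implication $H\preceq G\Rightarrow\xi(H)\le\xi(G)$ to both ends of this chain then gives $\xi(P_n^{\ast})\le\xi(G)\le\xi(S_n^{\ast})$, as required. There is no genuine obstacle in this step: all the substantive work has already been done, first in establishing Theorem~\ref{thm:treebounds} for trees, and then in transporting that bound across the graph-star bijection via Lemma~\ref{lem:relationresptostar} to obtain Corollary~\ref{cor:wctreebounds}. The corollary we are proving is simply the translation of that partial-order statement into the language of largest real independence roots.
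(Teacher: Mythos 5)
Your proposal is correct and is exactly the argument the paper intends: the corollary is stated without proof as an immediate consequence of Corollary~\ref{cor:wctreebounds} (via Corollary~\ref{cor:wctreesaregraphstars}) together with the observation from the introduction that $H\preceq G$ implies $\xi(H)\le\xi(G)$. Your handling of the extremal cases $G\cong P_n^{\ast}$ or $G\cong S_n^{\ast}$ is a reasonable bit of extra care that the paper leaves implicit.
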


\subsection{Well-covered unicyclic graphs}

We turn our attention now to well-covered unicyclic graphs. Let $\mathcal{KU}$ be the family of all graphs of the form $G^{\ast}$ where $G$ is a connected unicyclic graph. Unlike well-covered trees, there are well-covered unicyclic graphs that do not belong to $\mathcal{KU}$. In particular, there are well-covered unicyclic graphs of odd order, so finding the maximum and minimum well-covered unicyclic graphs of order $n$ will require more sophisticated techniques than simply applying Lemma~\ref{lem:relationresptostar}, although this lemma will be useful. We need to define a few more families of graphs before we can state Topp and Volkmann's \cite{ToppVolkmann1990} characterization of connected well-covered unicyclic graphs.

Let $G$ be a graph and $S \subset V(G)$ such that $\deg_{G[S]}(u)=\deg_G(u)$ for all but exactly one vertex $x$ in $S$ where $\deg_{G[S]}(x)<\deg_G(x)$. We call $S$ a \textit{well-covered branch (induced by $x$)} if a well-covered tree can obtained by adding a leaf to $x$ in $G[S]$ and $S$ is the maximal set having these properties. For example, in each of the graphs shown in Figure \ref{fig:S345} the vertices of $T_1^{\ast}, \ldots, T_k^{\ast}$ together with $x$ form a well-covered branch induced by $x$. Note that in Figure \ref{fig:S345} (b), the leaf adjacent to $x$ is not included in the well-covered branch induced by $x$. Also note that $G[S - \{x\}] \cong F^{\ast}$ for some forest $F$. We say \emph{$x$ induces a well-covered branch isomorphic to $F^{\ast}$} if $G[S - \{x\}] \cong F^{\ast}$. Furthermore note that $x$ is not adjacent to any leaves of $F^{\ast}$. 

Let $\mathcal{S}_3$ be the class of all graphs that can be obtained from a $C_3$ by allowing exactly 1 or 2 vertices in the $3$-cycle to induce any well-covered branch. Let $\mathcal{S}_4$ be the class of all graphs of the form $(T\cup K_2)+au+bv$ where $T$ is a well-covered tree, $uv$ is a non-pendant edge of $T$ and the vertex set of the $K_2$ component is $\{a,b\}$. Let $\mathcal{S}_5$ be the class of all graphs that can be obtained from a $C_5$ by allowing exactly 1 or 2 nonadjacent vertices in the $5$-cycle to induce any well-covered branch. See Figure~\ref{fig:S345} for some general examples of these graphs. The graph classes $\mathcal{S}_3$, $\mathcal{S}_{4}$, and $\mathcal{S}_5$ were first defined in \cite{ToppVolkmann1990}, where the following characterization theorem was also given.

\setcounter{subfigure}{0}
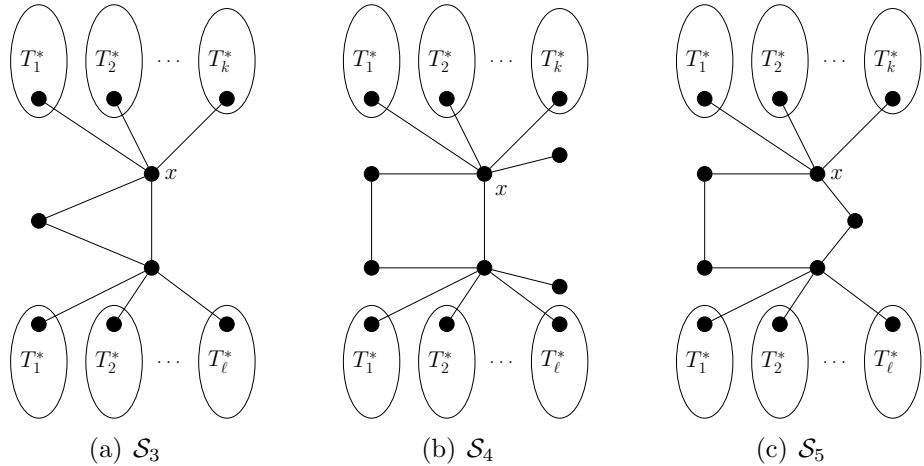
\begin{figure}[!h]
\def\c{0.5}
\def\r{1}
\centering
\subfigure[$\mathcal{S}_3$]{
\scalebox{\c}{
\begin{tikzpicture}
\draw (-3,4) ellipse (0.75 and 1.5);
\node[text width=1cm] at (-3,4) {\Large $T^{\ast}_{1}$};
\node[shape=circle,draw=black,fill=black] (u1) at (-3,3) {};

\draw (-1,4) ellipse (0.75 and 1.5);
\node[text width=1cm] at (-1,4) {\Large $T^{\ast}_{2}$};
\node[shape=circle,draw=black,fill=black] (u2) at (-1,3) {};

\draw (2,4) ellipse (0.75 and 1.4);
\node[text width=1cm] at (2,4) {\Large $T^{\ast}_{k}$};
\node[shape=circle,draw=black,fill=black] (uk) at (2,3) {};

\node[shape=circle,draw=black,fill=black] (r2) at (0,-1.5) {};
\node[shape=circle,draw=black,fill=black,label=right:\Large $x$] (r1) at (0,1) {};
\node[shape=circle,draw=black,fill=black] (r3) at (-3,-0.25) {};

\draw (-3,-4) ellipse (0.75 and 1.5);
\node[text width=1cm] at (-3,-4) {\Large $T^{\ast}_{1}$};
\node[shape=circle,draw=black,fill=black] (v1) at (-3,-3) {};

\draw (-1,-4) ellipse (0.75 and 1.5);
\node[text width=1cm] at (-1,-4) {\Large $T^{\ast}_{2}$};
\node[shape=circle,draw=black,fill=black] (v2) at (-1,-3) {};

\draw (2,-4) ellipse (0.75 and 1.5);
\node[text width=1cm] at (2,-4) {\Large $T^{\ast}_{\ell}$};
\node[shape=circle,draw=black,fill=black] (vk) at (2,-3) {};

\begin{scope}
    \path [-] (r1) edge node {} (r2);
    \path [-] (r3) edge node {} (r2);
    \path [-] (r1) edge node {} (r3); 
    \path [-] (r2) edge node {} (v1);
    \path [-] (r2) edge node {} (v2);
    \path [-] (r2) edge node {} (vk); 
    \path [-] (r1) edge node {} (u1);
    \path [-] (r1) edge node {} (u2);
    \path [-] (r1) edge node {} (uk); 
\end{scope}

\begin{scope}
    \path (0,-4) -- node[auto=false]{\Large \ldots} (1,-4);
    \path (0,4) -- node[auto=false]{\Large \ldots} (1,4);
\end{scope}
\end{tikzpicture}}}
\qquad
\subfigure[$\mathcal{S}_4$]{
\scalebox{\c}{
\begin{tikzpicture}
\draw (-3,4) ellipse (0.75 and 1.5);
\node[text width=1cm] at (-3,4) {\Large $T^{\ast}_{1}$};
\node[shape=circle,draw=black,fill=black] (u1) at (-3,3) {};

\draw (-1,4) ellipse (0.75 and 1.5);
\node[text width=1cm] at (-1,4) {\Large $T^{\ast}_{2}$};
\node[shape=circle,draw=black,fill=black] (u2) at (-1,3) {};

\draw (2,4) ellipse (0.75 and 1.4);
\node[text width=1cm] at (2,4) {\Large $T^{\ast}_{k}$};
\node[shape=circle,draw=black,fill=black] (uk) at (2,3) {};

\node[shape=circle,draw=black,fill=black] (r2) at (0,-1.5) {};
\node[shape=circle,draw=black,fill=black,label=below right:\Large $x$] (r1) at (0,1) {};
\node[shape=circle,draw=black,fill=black] (r3) at (-3,-1.5) {};
\node[shape=circle,draw=black,fill=black] (r4) at (-3,1) {};

\node[shape=circle,draw=black,fill=black] (l2) at (2,-2) {};
\node[shape=circle,draw=black,fill=black] (l1) at (2,1.5) {};

\draw (-3,-4) ellipse (0.75 and 1.5);
\node[text width=1cm] at (-3,-4) {\Large $T^{\ast}_{1}$};
\node[shape=circle,draw=black,fill=black] (v1) at (-3,-3) {};

\draw (-1,-4) ellipse (0.75 and 1.5);
\node[text width=1cm] at (-1,-4) {\Large $T^{\ast}_{2}$};
\node[shape=circle,draw=black,fill=black] (v2) at (-1,-3) {};

\draw (2,-4) ellipse (0.75 and 1.5);
\node[text width=1cm] at (2,-4) {\Large $T^{\ast}_{\ell}$};
\node[shape=circle,draw=black,fill=black] (vk) at (2,-3) {};

\begin{scope}
    \path [-] (r1) edge node {} (r2);
    \path [-] (r3) edge node {} (r2);
    \path [-] (r1) edge node {} (r4); 
    \path [-] (r3) edge node {} (r4); 
    \path [-] (r2) edge node {} (v1);
    \path [-] (r2) edge node {} (v2);
    \path [-] (r2) edge node {} (vk); 
    \path [-] (r1) edge node {} (u1);
    \path [-] (r1) edge node {} (u2);
    \path [-] (r1) edge node {} (uk); 
    
    \path [-] (r1) edge node {} (l1);
    \path [-] (r2) edge node {} (l2);
\end{scope}

\begin{scope}
    \path (0,-4) -- node[auto=false]{\Large \ldots} (1,-4);
    \path (0,4) -- node[auto=false]{\Large \ldots} (1,4);
\end{scope}
\end{tikzpicture}}}
\qquad
\subfigure[$\mathcal{S}_5$]{
\scalebox{\c}{
\begin{tikzpicture}
\draw (-3,4) ellipse (0.75 and 1.5);
\node[text width=1cm] at (-3,4) {\Large $T^{\ast}_{1}$};
\node[shape=circle,draw=black,fill=black] (u1) at (-3,3) {};

\draw (-1,4) ellipse (0.75 and 1.5);
\node[text width=1cm] at (-1,4) {\Large $T^{\ast}_{2}$};
\node[shape=circle,draw=black,fill=black] (u2) at (-1,3) {};

\draw (2,4) ellipse (0.75 and 1.4);
\node[text width=1cm] at (2,4) {\Large $T^{\ast}_{k}$};
\node[shape=circle,draw=black,fill=black] (uk) at (2,3) {};

\node[shape=circle,draw=black,fill=black] (r2) at (0,-1.5) {};
\node[shape=circle,draw=black,fill=black,label=right:\Large $x$] (r1) at (0,1) {};
\node[shape=circle,draw=black,fill=black] (r3) at (-3,-1.5) {};
\node[shape=circle,draw=black,fill=black] (r4) at (-3,1) {};
\node[shape=circle,draw=black,fill=black] (r5) at (1,-0.25) {};

\draw (-3,-4) ellipse (0.75 and 1.5);
\node[text width=1cm] at (-3,-4) {\Large $T^{\ast}_{1}$};
\node[shape=circle,draw=black,fill=black] (v1) at (-3,-3) {};

\draw (-1,-4) ellipse (0.75 and 1.5);
\node[text width=1cm] at (-1,-4) {\Large $T^{\ast}_{2}$};
\node[shape=circle,draw=black,fill=black] (v2) at (-1,-3) {};

\draw (2,-4) ellipse (0.75 and 1.5);
\node[text width=1cm] at (2,-4) {\Large $T^{\ast}_{\ell}$};
\node[shape=circle,draw=black,fill=black] (vk) at (2,-3) {};

\begin{scope}
    \path [-] (r1) edge node {} (r5);
    \path [-] (r5) edge node {} (r2);
    \path [-] (r3) edge node {} (r2);
    \path [-] (r1) edge node {} (r4); 
    \path [-] (r3) edge node {} (r4); 
    \path [-] (r2) edge node {} (v1);
    \path [-] (r2) edge node {} (v2);
    \path [-] (r2) edge node {} (vk); 
    \path [-] (r1) edge node {} (u1);
    \path [-] (r1) edge node {} (u2);
    \path [-] (r1) edge node {} (uk); 
\end{scope}

\begin{scope}
    \path (0,-4) -- node[auto=false]{\Large \ldots} (1,-4);
    \path (0,4) -- node[auto=false]{\Large \ldots} (1,4);
\end{scope}
\end{tikzpicture}}}
\caption{The general form of graphs in $\mathcal{S}_3$, $\mathcal{S}_4$, and $\mathcal{S}_5$ where $T_i$ and $T_i'$ are trees for all $i$.}
\label{fig:S345}
\end{figure}

\begin{theorem}[\cite{ToppVolkmann1990}]\label{thm:wcunicyclicgraphs}
A graph $G$ is a connected well-covered unicyclic graph if and only if $$G\in \{C_3,C_4,C_5,C_7\}\cup \mathcal{S}_3\cup \mathcal{S}_4\cup \mathcal{S}_5\cup\mathcal{KU}.$$
\end{theorem}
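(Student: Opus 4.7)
My plan is to prove both directions of the equivalence by structural analysis based on the length of the unique cycle.

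For the ``if'' direction I would verify well-coveredness for each listed family. The cycles $C_3, C_4, C_5, C_7$ are immediate (these are precisely the well-covered cycles). For $G \in \mathcal{KU}$ with $G = H^\ast$, every maximal independent set $I$ must contain exactly one endpoint of each pendant edge (if neither were in $I$, the leaf could be added), so $|I| = |V(H)|$ and well-coveredness follows. For $G \in \mathcal{S}_3 \cup \mathcal{S}_4 \cup \mathcal{S}_5$, I would partition any maximal independent set into its intersections with each attached well-covered branch $F^\ast$ and with the short cycle. Since each branch is itself very well-covered (Corollary~\ref{cor:wctreesaregraphstars}), its contribution is pinned down; a short case analysis on how the remaining cycle vertices can be chosen then yields a uniform total size across all maximal independent sets.

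For the ``only if'' direction, let $G$ be a connected well-covered unicyclic graph with unique cycle of length $k$. I split on $k$. If $k \geq 6$ and $G \neq C_7$, the girth of $G$ is at least $6$, so Theorem~\ref{thm:wctg6} applies and the pendant edges of $G$ form a perfect matching; peeling off all leaves yields a graph $H$ with $G = H^\ast$, and since $G$ is connected and unicyclic so is $H$, placing $G \in \mathcal{KU}$. The pure cycle case gives $G \in \{C_3,C_4,C_5,C_7\}$.

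If $k \in \{3,4,5\}$, I would argue first that every maximal subtree of $G$ rooted at a cycle vertex $x$ (not containing the rest of the cycle) must, after attaching a dummy pendant leaf at $x$, be a well-covered tree. This is because, by well-coveredness of $G$, any maximal independent set of this ``branch $+$ dummy leaf'' extends to a maximal independent set of $G$ of the fixed size $\alpha(G)$, forcing uniformity of branch sizes. Corollary~\ref{cor:wctreesaregraphstars} then forces each such branch to have the form $F^\ast$. A final case analysis on the short cycle dictates which configurations of branch-bearing cycle vertices are consistent with well-coveredness: for $k=3$ one obtains $\mathcal{S}_3$ (exactly one or two branch-bearing vertices), for $k=4$ the diagonal $K_2$ attachment of $\mathcal{S}_4$, and for $k=5$ two nonadjacent branch-bearing vertices as in $\mathcal{S}_5$.

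The main obstacle is this last case analysis: one must show that any other configuration (say, three branch-bearing vertices on $C_5$, or two adjacent branch-bearing vertices on $C_5$, or the ``wrong'' attachment on $C_4$) admits two maximal independent sets of different sizes. This is done by explicitly constructing such sets---typically by choosing either to include a cycle vertex (suppressing its branch contribution by one) or to exclude it (so the branch contributes its full independence number), and showing that in the forbidden configurations these choices yield unequal totals. Checking all sub-cases cleanly across $k=3,4,5$ is the bulk of the work.
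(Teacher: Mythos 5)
First, note that the paper offers no proof of Theorem~\ref{thm:wcunicyclicgraphs}: it is quoted from Topp and Volkmann \cite{ToppVolkmann1990}, so there is no internal argument to compare yours against. Judged on its own terms, your outline has the right overall shape (split on the length of the unique cycle, invoke Theorem~\ref{thm:wctg6} when the girth is at least $6$), and both the ``if'' direction and the $k\ge 6$ case are sound, but the ``only if'' direction for $k\in\{3,4,5\}$ rests on a false structural claim.

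The claim that ``every maximal subtree of $G$ rooted at a cycle vertex $x$, after attaching a dummy pendant leaf at $x$, is a well-covered tree'' fails. Take $G=C_3^{\ast}\in\mathcal{KU}$: the subtree at a cycle vertex $x$ is the single pendant edge $xx^{\ast}$, and attaching a dummy leaf to $x$ produces $P_3$, which is not well-covered, yet $C_3^{\ast}$ is. The same failure occurs for every graph in $\mathcal{S}_4$: there the cycle vertices $u$ and $v$ lie on a non-pendant edge of the well-covered tree $T$, so each already has a pendant partner inside its branch, and a dummy leaf at $u$ gives $u$ two leaf neighbours, destroying the perfect matching of pendant edges required by Corollary~\ref{cor:wctreesaregraphstars}. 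The correct local dichotomy is that the branch at a cycle vertex is either of ``matched'' type (isomorphic to $F^{\ast}$ with the cycle vertex adjacent to one of its leaves) or of ``unmatched'' type (a well-covered branch in the sense defined before Figure~\ref{fig:S345}); your taxonomy admits only the second. Consequently your case analysis for $k\in\{3,4,5\}$ can only ever output $\mathcal{S}_3\cup\mathcal{S}_4\cup\mathcal{S}_5$ and omits all small-girth members of $\mathcal{KU}$ (for instance $C_3^{\ast}$, $C_5^{\ast}$, $U_n^{\ast}$), which are connected well-covered unicyclic graphs lying in none of the $\mathcal{S}_i$. Separately, the justification offered for the branch claim (``any maximal independent set of the branch extends to one of $G$ of size $\alpha(G)$'') is not rigorous as stated; the standard tool is that $G-N[S]$ is well-covered for every independent set $S$ of a well-covered graph $G$, applied with $S$ chosen outside the branch, and even that yields only the two-type dichotomy above, not the single type you assert.
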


Note that all graphs in $\mathcal{S}_3$ and $\mathcal{S}_5$ have odd order while all graphs in $\mathcal{S}_4$ have even order by definition.

\subsubsection{Even order}

For our results in this section we will require some technical lemmas. The first is a result similar to Theorem~\ref{thm:boldstaroperation} that allows us to make small local changes to a graph that results in a graph which is smaller with respect to $\preceq$. Unlike Theorem~\ref{thm:boldstaroperation} though, this new operation will allow for the preservation of well-coveredness. 

Let $G$ be a graph with a well-covered branch $S$ induced by $x$ such that there are at least two vertices $a,b\in S-\{x\}$ such that $\deg_{G}(a)=\deg_{G}(b)=2$. Note that if there is exactly one vertex whose degree is $2$, then $G[S - \{x\}]\cong P_{|S|/2}^{\ast}$ and the unique neighbour of $x$ in $S-\{x\}$ has degree $3$ in $G$. Let $u,v\in S$ such that $\deg_G(u)=2$ and $v$ is a vertex with $\deg_{G[S]}(v) \geq 4$ at shortest distance to $u$ among all vertices with degree at least $4$ in $G[S]$. If no such $v$ exists or $v$ is further from $u$ than $x$, then assign $v=x$. Note if $v=x$ then $\deg_{G[S]}(x) \geq 2$ and therefore $\deg_{G}(x) \geq 3$. Let $w\in S$ be a non-leaf neighbour of $v$ not on the path from $u$ to $x$. We define the \textit{dagger operation} on the graph $G$, denoted $G^{\dagger}_{u,v,w}$, to be the the graph $G-vw+uw$. Note that $S$ is still a well-covered branch in $G^{\dagger}_{u,v,w}$. See Figure \ref{fig:Gdagger} for an illustration of this operation. 

\setcounter{subfigure}{0}
\begin{figure}[!h]
\def\c{0.5}
\def\r{1}
\centering
\subfigure[$S$ in $G$]{
\scalebox{\c}{
\begin{tikzpicture}
\draw (0.5,-0.25) ellipse (1.25 and 2.25);
\node[text width=1cm] at (0.5,1.25) {\Large $F^{\ast}$};

\node[shape=circle,draw=black,fill=black, label=left:\Large $u$] (1) at (-6,0) {};
\node[shape=circle,draw=black,fill=black] (2) at (-4,0) {};
\node[shape=circle,draw=black,fill=black, label=above right:\Large $v$] (3) at (-2,0) {};

\node[shape=circle,draw=black,fill=black] (1l) at (-6,-2) {};
\node[shape=circle,draw=black,fill=black] (2l) at (-4,-2) {};
\node[shape=circle,draw=black,fill=black] (3l) at (-2,-2) {};

\draw (-3.5,1.5) ellipse (1.5 and 0.75);
\node[text width=1cm] at (-4,1.5) {\Large $T^{\ast}$};
\node[shape=circle,draw=black,fill=black, label=above:\Large $w$] (w) at (-3,1.25) {};

\node[shape=circle,draw=black,fill=black] (4) at (0,0.25) {};
\node[shape=circle,draw=black,fill=black] (5) at (0,-0.5) {};
\node[shape=circle,draw=black,fill=black] (6) at (0,-1.5) {};

\node[shape=circle,draw=black,fill=black] (7) at (1.25,0.75) {};
\node[shape=circle,draw=black,fill=black] (8) at (1.25,0) {};
\node[shape=circle,draw=black,fill=black] (9) at (1.25,-1) {};
\node[shape=circle,draw=black,fill=black, label=above:\Large $x$] (x) at (3.25,-0.25) {};

\begin{scope}
    \path [-] (1) edge node {} (2);
    \path [-] (2) edge node {} (-3.5,0);
    \path [-] (-2.5,0) edge node {} (3);
    \path [-] (3) edge node {} (4);
    \path [-] (3) edge node {} (5);
    \path [-] (3) edge node {} (6);
    
    \path [-] (1) edge node {} (1l);
    \path [-] (2) edge node {} (2l);
    \path [-] (3) edge node {} (3l);
    
    \path [-] (3) edge node {} (w);
    
    \path [-] (x) edge node {} (7);
    \path [-] (x) edge node {} (8);
    \path [-] (x) edge node {} (9);
       
\end{scope}

\begin{scope}
    \path (-3.5,0) -- node[auto=false]{\Large \ldots} (-2.5,0);
    \path (5) -- node[auto=false]{\Large \vdots} (6);
    \path (8) -- node[auto=false]{\Large \vdots} (9);
\end{scope}
\end{tikzpicture}}}
\qquad
\subfigure[$S$ in $G^{\dagger}_{u,v,w}$]{
\scalebox{\c}{
\begin{tikzpicture}
\draw (0.5,-0.25) ellipse (1.25 and 2.25);
\node[text width=1cm] at (0.5,1.25) {\Large $F^{\ast}$};

\node[shape=circle,draw=black,fill=black, label=left:\Large $u$] (1) at (-6,0) {};
\node[shape=circle,draw=black,fill=black] (2) at (-4,0) {};
\node[shape=circle,draw=black,fill=black, label=above right:\Large $v$] (3) at (-2,0) {};

\node[shape=circle,draw=black,fill=black] (1l) at (-6,-2) {};
\node[shape=circle,draw=black,fill=black] (2l) at (-4,-2) {};
\node[shape=circle,draw=black,fill=black] (3l) at (-2,-2) {};

\draw (-3.5,1.5) ellipse (1.5 and 0.75);
\node[text width=1cm] at (-4,1.5) {\Large $T^{\ast}$};
\node[shape=circle,draw=black,fill=black, label=above:\Large $w$] (w) at (-3,1.25) {};

\node[shape=circle,draw=black,fill=black] (4) at (0,0.25) {};
\node[shape=circle,draw=black,fill=black] (5) at (0,-0.5) {};
\node[shape=circle,draw=black,fill=black] (6) at (0,-1.5) {};

\node[shape=circle,draw=black,fill=black] (7) at (1.25,0.75) {};
\node[shape=circle,draw=black,fill=black] (8) at (1.25,0) {};
\node[shape=circle,draw=black,fill=black] (9) at (1.25,-1) {};
\node[shape=circle,draw=black,fill=black, label=above:\Large $x$] (x) at (3.25,-0.25) {};

\begin{scope}
    \path [-] (1) edge node {} (2);
    \path [-] (2) edge node {} (-3.5,0);
    \path [-] (-2.5,0) edge node {} (3);
    \path [-] (3) edge node {} (4);
    \path [-] (3) edge node {} (5);
    \path [-] (3) edge node {} (6);
    
    \path [-] (1) edge node {} (1l);
    \path [-] (2) edge node {} (2l);
    \path [-] (3) edge node {} (3l);
    
    \path [-] (1) edge node {} (w);
    
    \path [-] (x) edge node {} (7);
    \path [-] (x) edge node {} (8);
    \path [-] (x) edge node {} (9);
       
\end{scope}

\begin{scope}
    \path (-3.5,0) -- node[auto=false]{\Large \ldots} (-2.5,0);
    \path (5) -- node[auto=false]{\Large \vdots} (6);
    \path (8) -- node[auto=false]{\Large \vdots} (9);
\end{scope}
\end{tikzpicture}}}
\caption{Illustration of the dagger operation.}%
\label{fig:Gdagger}%
\end{figure}
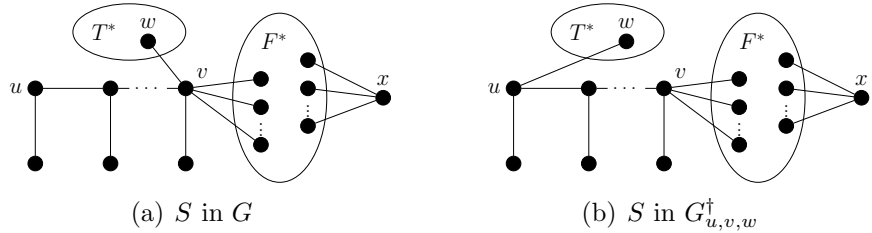

\begin{lemma}\label{lem:dagger}
If $G$ is a graph as above, then $G^{\dagger}_{u,v,w}\preceq G$.
\end{lemma}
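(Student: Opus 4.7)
The plan is to invoke Theorem~\ref{thm:recursivepartialorder}(ii) with $e = vw \in E(G)$ and $f = uw \in E(H)$, where $H := G^{\dagger}_{u,v,w}$. The first hypothesis of that theorem reduces to an identity:
\[ H - f = (G - vw + uw) - uw = G - vw = G - e, \]
so $H - f \preceq G - e$ holds with equality.

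The second hypothesis, $G - N[e] \preceq H - N[f]$, requires comparing the induced subgraphs $G - N_G[vw]$ and $H - N_H[uw]$. Since $H$ and $G$ differ only in the swap $vw \leftrightarrow uw$, one computes $N_H[u] = N_G[u] \cup \{w\}$ and $N_H[w] = (N_G[w] \setminus \{v\}) \cup \{u\}$, and hence
\[ N_H[uw] = N_G[u] \cup (N_G[w] \setminus \{v\}), \qquad N_G[vw] = N_G[v] \cup N_G[w]. \]
The hypothesis $\deg_G(u) = 2$ forces $N_G[u] = \{u, u^*, u'\}$, where $u^*$ is the leaf partner of $u$ in the branch $S$ and $u'$ is the next vertex along the shortest $u$--$v$ path in $G[S]$ (with $u' = v$ in the length-$1$ case). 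On the other hand, $v$ has degree at least $3$ in $G$, in fact at least $4$ in $G[S]$ whenever $v \ne x$, so $N_G[v]$ strips away substantially more of the branch than $N_G[u]$ does.

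The structural comparison proceeds by case analysis on whether $u \sim v$ and whether $v = x$. In each case, after identifying $G - N_G[vw]$ and $H - N_H[uw]$ explicitly (both inside the branch $S$ and outside it), the vertices retained in $G - N_G[vw]$ but lost in $H - N_H[uw]$ are confined to $\{u^*\}$ (and possibly $\{u\}$ when $u \not\sim v$); these form isolated vertices or an isolated pendant edge in $G - N_G[vw]$ because their only other $G$-neighbors lie inside the removed set $N_G[v]$. They are outweighed by the additional structure rooted at $v$ and its other $G$-neighbors (those escaping both $N_G[u]$ and $N_G[w]$) that is retained in $H - N_H[uw]$. Invoking Theorem~\ref{thm:subgraphlessthangraph}, after factoring off any common $(1+x)$ contributions from matched isolated vertices, then yields $G - N_G[vw] \preceq H - N_H[uw]$, and Theorem~\ref{thm:recursivepartialorder}(ii) delivers $G^{\dagger}_{u,v,w} = H \preceq G$.

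The main obstacle is the careful bookkeeping required to carry out this case analysis: precisely determining which vertices of $S$ survive in each residual graph, and which edges they retain. The structural identity $G[S - \{x\}] \cong F^{\ast}$, which ensures that non-leaf vertices of $S \setminus \{x\}$ all come with pendant partners, is essential for pinning down the isolated components in $G - N_G[vw]$ and recognizing $G - N_G[vw]$ as an induced subgraph of $H - N_H[uw]$ in every case.
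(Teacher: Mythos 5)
Your overall strategy is exactly the paper's: apply Theorem~\ref{thm:recursivepartialorder}(ii) with $e=vw$ and $f=uw$, observe that $G-vw=G^{\dagger}_{u,v,w}-uw$, and reduce to showing that $G-N[vw]$ is (isomorphic to) a subgraph of $G^{\dagger}_{u,v,w}-N[uw]$ so that Theorem~\ref{thm:subgraphlessthangraph} applies. The problem is that the one step you defer to ``bookkeeping'' is the entire content of the lemma, and the sketch you give of that bookkeeping is wrong in the main case. You claim the vertices retained in $G-N_G[vw]$ but lost in $G^{\dagger}_{u,v,w}-N_H[uw]$ are confined to $\{u^\ast\}$ (and possibly $u$), and that they form isolated vertices or an isolated pendant edge. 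Since $N_H[uw]=N_G[u]\cup(N_G[w]\setminus\{v\})$ and $N_G[u]=\{u,u^\ast,u'\}$, the lost set is $\{u,u^\ast,u'\}\setminus(N_G[v]\cup N_G[w])$: whenever $u$ is at distance at least $3$ from $v$ in $G[S]$, the spine neighbour $u'$ is also lost, and the three lost vertices sit inside a larger path-star component of $G-N[vw]$ (e.g.\ $u'$ remains adjacent to its own pendant and to the next spine vertex). They are neither isolated nor an isolated $K_2$, so there is no common $(1+x)$ factor to cancel, and the comparison cannot be finished the way you describe.

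The paper's proof closes this gap with a global rather than local accounting. Let $T$ be the set of vertices of $S$ whose shortest path to $u$ avoids $v$; then $G[T]\cong P_k^{\ast}$, and $G-N[vw]\cong P_{k-1}^{\ast}\cup K_1\cup\bigl(G-T-(N[v]\cup N[w])\bigr)$, whereas $G^{\dagger}_{u,v,w}-N[uw]$ retains only $P_{k-2}^{\ast}\cup K_1$ from $T$ but keeps all of $G-T-(N[w]\setminus\{v\})$, in particular $v$, the edge from $v$ to the end of the surviving spine, and one further neighbour of $v$ outside $T$ (a pendant leaf of $v$ if $v\neq x$, or a neighbour of $x$ outside $S$ if $v=x$). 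These two extra vertices extend $P_{k-2}^{\ast}$ to $P_{k-1}^{\ast}$, which is what actually produces the required subgraph embedding. Your remark that the loss is ``outweighed by the additional structure rooted at $v$'' points in the right direction, but without this explicit re-embedding (which uses the adjacency $v\sim t_1$, not just a count of spare vertices) the argument does not go through.
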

\begin{proof}
Let $G$ be a graph with $S$ a well-covered branch induced by $x$ such that there are at least two vertices $a,b\in S-\{x\}$ such that $\deg_{G}(a)=\deg_{G}(b)=2$. Let and $u,v,w\in V(G)$ as required in the definition of the dagger operation. Let $T$ be the collection of vertices in $S$ whose shortest path to $u$ does not contain $v$. Note $G[T] \cong P_{k}^{\ast}$ for some $k>0$. Also let $H=G-T$. It is clear that $G-vw=G^{\dagger}_{u,v,w}-uw$, so $G-vw\succeq G^{\dagger}_{u,v,w}-uw$. Now by Theorem~\ref{thm:recursivepartialorder}, it suffices to show $G-N[vw]\preceq G^{\dagger}_{u,v,w}-N[uw]$. Furthermore by Theorem \ref{thm:subgraphlessthangraph} it suffices to show $G^{\dagger}_{u,v,w}-N[uw]$ has a subgraph isomorphic to  $G-N[vw]\cong P_{k-1}^{\ast}\cup K_1\cup (H-N[v]\cup N[w])$. Clearly $H-N[v]\cup N[w]$ is a subgraph of $G^{\dagger}_{u,v,w}-N[uw]$. Now with the remaining vertices we can obtain $P_{k-1}^{\ast} \cup K_1$ from the $P_{k-2}^{\ast} \cup K_1$ left from $T$ after deleting $N[u]$ together with $v$ and one of its neighbours not in $T$. Note if $v \neq x$, $v$ necessarily has a leaf neighbour not in $T$, and if $v = x$ then $x$ has a neighbour not in $S$ which was otherwise deleted in $G-N[vw]$. Therefore $G-N[vw]$ is a subgraph of $G^{\dagger}_{u,v,w}-N[uw]$.
\end{proof}

Let $G$ be a graph with well-covered branch $S$ induced by $x$. Note $G^{\dagger}_{u,v,w}$ reduces the number of degree $2$ vertices in $S$ while keeping it a well-covered branch. It follows that repeated applications of the previous lemma allows the $S-x$ to be replaced by $P_{n}^{\ast}$ for the appropriate value of $n$. This gives the next technical lemma.

\begin{lemma}\label{lem:replacebranchwithpath}
Let $G$ be a connected well-covered unicyclic graph with a well-covered branch $S$ induced by $x$. Let the graph $H$ be obtained from $G$ by rearranging the edges induced by $S-\{x\}$ such that $x$ now induces the well-covered branch $S$ such that $\deg_{H}(u)=2$ for exactly one vertex $v\in S-\{x\}$. Then $H\preceq G$.
\end{lemma}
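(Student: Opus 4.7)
The plan is to proceed by induction on $n_2(G) := |\{u \in S - \{x\} : \deg_G(u) = 2\}|$, the number of degree-$2$ vertices inside the well-covered branch $S$ (excluding $x$), using Lemma~\ref{lem:dagger} as the reduction step. For the base case $n_2(G) = 1$, the graph $G$ itself already satisfies the required condition on $H$, so we may take $H \cong G$ and the conclusion $H \preceq G$ is immediate from reflexivity of $\preceq$.

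For the inductive step, assume $n_2(G) \geq 2$. Since at least two vertices $a, b \in S - \{x\}$ have degree $2$ in $G$, the hypothesis of Lemma~\ref{lem:dagger} is satisfied, so we may form $G^{\dagger} := G^{\dagger}_{u,v,w}$ with $G^{\dagger} \preceq G$ for an appropriate choice of $u, v, w$. Two facts then drive the induction: first, $S$ remains a well-covered branch induced by $x$ in $G^{\dagger}$ (as noted in the paragraph defining the dagger operation); second, $n_2(G^{\dagger}) = n_2(G) - 1$. For the latter, observe that the operation raises $\deg(u)$ from $2$ to $3$, while $v$ is either $x$ itself (and thus uncounted) or has $\deg_{G[S]}(v) \geq 4$, which gives $\deg_G(v) \geq 4$ and hence $\deg_{G^{\dagger}}(v) \geq 3$; meanwhile $\deg(w)$ is unchanged, and no other vertex is affected. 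Applying the inductive hypothesis to $G^{\dagger}$ with the same branch $S$ induced by $x$, we obtain $H \preceq G^{\dagger}$, and the transitivity of $\preceq$ (Theorem~\ref{thm:partialorder}) yields $H \preceq G$.

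I would also note in passing that $H$ is well-defined up to isomorphism: since $G[S - \{x\}] \cong F^{\ast}$ for some forest $F$, and the constraint forces exactly one degree-$2$ vertex to survive in $S - \{x\}$, an analysis of the degree sequence of $P_k^{\ast}$ (endpoints of degree $2$, internal path vertices of degree $3$, pendants of degree $1$) shows that $F$ must be a single path and that $x$ must attach to a path-endpoint of $P_k^{\ast}$. The main obstacle I expect is verifying the first fact above — that $S$ genuinely remains a well-covered branch induced by $x$ after each dagger operation — since this is the load-bearing structural claim that lets the induction proceed. This verification is indicated but not argued in detail in the setup of the dagger operation, and would likely invoke Theorem~\ref{thm:wctg6} or a direct analysis of the maximal independent sets of $G^{\dagger}[S]$ together with a newly attached leaf at $x$.
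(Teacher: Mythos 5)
Your proof is correct and follows essentially the same route as the paper's: induction on the number of degree-$2$ vertices in $S-\{x\}$, with the base case $H\cong G$ and the inductive step supplied by Lemma~\ref{lem:dagger} plus transitivity. The paper likewise relies on the (unproved) assertion from the setup of the dagger operation that $S$ remains a well-covered branch induced by $x$ in $G^{\dagger}_{u,v,w}$, so your flagged concern applies equally to both arguments.
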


\begin{proof}
We shall induct on the number of degree 2 vertices in $S-\{x\}$. Note that by definition $\deg_{G[S]}(v)=\deg_{G}(v)$ for all $v \in S-\{x\}$. If $S-\{x\}$ has one vertex of degree two, then $H \cong G$ so $H\preceq G$, trivially. Now suppose $S-\{x\}$ has $k > 1$ vertices of degree two. Therefore there exists $u,v,w \in S$ as in the definition of the dagger operation to obtain $G^{\dagger}_{u,v,w}$. By Lemma  \ref{lem:dagger}, $G^{\dagger}_{u,v,w}\preceq G$. The only vertices in $S-\{x\}$ with different degrees in $G$ and $G^{\dagger}_{u,v,w}$ are $u$ and, if $v\neq x$, $v$. Furthermore $u$ and $v$ both have degree at least three in $G^{\dagger}_{u,v,w}$, so there are fewer vertices of degree $2$ in $S-\{x\}$ in $G^{\dagger}_{u,v,w}$ than there are in $S-\{x\}$ in $G$. Therefore our inductive hypothesis applies to give $H \preceq  G^{\dagger}_{u,v,w}\preceq G$. 
\end{proof}

\begin{theorem}
If $G$ is a connected well-covered unicyclic graph of order $2n$, $n\ge 3$, then 
$$C_n^{\ast}\preceq G\preceq U_n^{\ast}.$$
\end{theorem}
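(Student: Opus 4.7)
My plan is to invoke the classification Theorem~\ref{thm:wcunicyclicgraphs}. Since $G$ has even order $2n\geq 6$, the odd-order families $\{C_3,C_5,C_7\}\cup\mathcal{S}_3\cup\mathcal{S}_5$ are excluded by the parity remark following that theorem, and $C_4$ has order $4<6$. Thus $G\in\mathcal{KU}\cup\mathcal{S}_4$, and I will handle the two cases separately.

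If $G\in\mathcal{KU}$, write $G=H^\ast$ for a connected unicyclic graph $H$ of order $n$. Theorem~\ref{thm:maxminunicyclic} yields $C_n\preceq H\preceq U_n$, and Lemma~\ref{lem:relationresptostar} transports this up to $C_n^\ast\preceq G\preceq U_n^\ast$ immediately.

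The main obstacle is the case $G\in\mathcal{S}_4$, because the unique cycle of $G$ has length $4$ rather than $n$, so Lemma~\ref{lem:relationresptostar} does not apply directly. Write $G=(T\cup K_2)+au+bv$, and use Corollary~\ref{cor:wctreesaregraphstars} to write $T=(T')^\ast$ for some tree $T'$; because $uv$ is non-pendant in $T$, it follows that $u,v\in V(T')$ and $uv\in E(T')$. My strategy is to show $G\sim H^\ast$, where $H:=T'+c+cu+cv$ is obtained by adding a new vertex $c$ adjacent to both $u$ and $v$. Together with the edge $uv\in E(T')$, the vertices $c,u,v$ form a triangle, so $H$ is connected unicyclic of order $|V(T')|+1=n$. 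Granting this equivalence, applying the $\mathcal{KU}$ case to $H^\ast$ finishes the proof.

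To prove $I(G,x)=I(H^\ast,x)$ I apply the standard vertex-deletion formula at $a$ in $G$ and at the pendant $c^\ast$ of $c$ in $H^\ast$, followed by a further deletion at the new pendant $b$ of $T+bv$ in the first expansion and at $c$ in $T+c+cu+cv$ in the second. The computation collapses to
\[
I(G,x) = I(T,x)+xI(T-v,x)+xI(T-u,x),
\]
\[
I(H^\ast,x) = (1+x)I(T,x)+xI(T-\{u,v\},x),
\]
so equality reduces to the identity $I(T-u,x)+I(T-v,x)=I(T,x)+I(T-\{u,v\},x)$. This identity is valid for any graph with edge $uv$: the deletion formula gives $I(T,x)-I(T-u,x)=xI(T-N_T[u],x)$ and likewise $I(T-v,x)-I(T-\{u,v\},x)=xI(T-N_T[u],x)$, since $v\in N_T(u)$ forces $(T-v)-N_{T-v}[u]=T-N_T[u]$. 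Intuitively, the pair $\{a,b\}$ acts as a subdivision of an edge of $H$ incident to $c$, a change that preserves the independence polynomial even though it alters the isomorphism type.
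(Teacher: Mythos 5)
Your proof is correct, and for the $\mathcal{S}_4$ case it takes a genuinely different route from the paper. The paper splits the two bounds: for $G\preceq U_n^{\ast}$ it observes that $G=F^{\star}_{b,a,w}$ for a graph $F\in\mathcal{KU}$ and invokes Theorem~\ref{thm:boldstaroperation}, while for $C_n^{\ast}\preceq G$ it first normalizes the two well-covered branches via Lemma~\ref{lem:replacebranchwithpath} and then runs the edge version of Theorem~\ref{thm:recursivepartialorder} against a cycle edge of $C_n^{\ast}$, using Corollary~\ref{cor:wctreebounds} for the deleted-edge comparison. You instead prove the stronger structural fact that every graph in $\mathcal{S}_4$ is independence equivalent to $H^{\ast}$ for the connected unicyclic graph $H=T'+c+cu+cv$, which collapses the entire case into the $\mathcal{KU}$ case and yields both bounds at once. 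I checked your two expansions, $I(G,x)=I(T,x)+xI(T-u,x)+xI(T-v,x)$ and $I(H^{\ast},x)=(1+x)I(T,x)+xI(T-\{u,v\},x)$, and the identity $I(T-u,x)+I(T-v,x)=I(T,x)+I(T-\{u,v\},x)$ for adjacent $u,v$; all are valid, as is the reduction of non-pendant edges of $T=(T')^{\ast}$ to edges of $T'$ via Corollary~\ref{cor:wctreesaregraphstars}. Your approach buys a cleaner, purely algebraic argument and a byproduct of independent interest (each $\mathcal{S}_4$ graph shares its independence polynomial with a member of $\mathcal{KU}$, in the spirit of the paper's Lemma~\ref{lem:equivofoddwcuniminimal} for $\mathcal{S}_P$); the paper's argument buys nothing extra here, though its machinery (the dagger operation and branch normalization) is reused for the odd-order case where no such equivalence to $\mathcal{KU}$ is available. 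The only cosmetic caveat is that Theorem~\ref{thm:maxminunicyclic} as stated excludes $H\cong C_n,D_n,U_n$, so you should cite the non-strict inequality $C_n\preceq H\preceq U_n$ established in its proof for all connected unicyclic $H$ (as the paper itself does in the $\mathcal{KU}$ case).
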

\begin{proof}
Suppose $G\in \mathcal{KU}$, i.e. $G=H^\ast$ for some connected unicyclic graph $H$, then the result follows by Lemma~\ref{lem:relationresptostar} and Theorem~\ref{thm:maxminunicyclic}. Now suppose $G\notin \mathcal{KU}$, then by Theorem~\ref{thm:wcunicyclicgraphs} $G\in \mathcal{S}_4$.

We first show that $G\preceq U_n^{\ast}$. Let $a,b,u,w$ be the vertices of the cycle in $G$ such that $N_G(a)=\{u,b\}$ and $N_G(b)=\{a,w\}$. Let $F$ be the graph $G-bw+aw$. Therefore, $F^{\star}_{b,a,w}=G$ so by Theorem~\ref{thm:boldstaroperation}, 
\begin{align}
G\preceq F.\label{eq:wcuni1}
\end{align}
 Also note that $F=H^\ast$ for some connected unicyclic graph of order $n$ and girth $3$, so by Lemma~\ref{lem:relationresptostar} and Theorem~\ref{thm:maxminunicyclic}, 
\begin{align}
F\preceq U_n^{\ast}.\label{eq:wcuni2}
\end{align}

\noindent By Theorem~\ref{thm:partialorder} and (\ref{eq:wcuni1}) and (\ref{eq:wcuni2}), it follows that $G\preceq U_n^{\ast}$.

We now show that $C_n^{\ast}\preceq G$. Recall $G\in \mathcal{S}_4$. By Lemma~\ref{lem:replacebranchwithpath} it suffices to assume $G$ is isomorphic to $(K_2\cup P_{k}^{\ast}\cup P_{\ell}^{\ast})+ua+vb+uv$ where $v$ is a degree $2$ vertex in $P_{\ell}^{\ast}$, $u$ is a degree $2$ vertex in $P_{k}^{\ast}$, and the vertex set of $K_2$ is equal to $\{a,b\}$. Note that $k+\ell=n-1$. Let $f\in E(C_n^{\ast})$ be an edge on the cycle. Now $G-au$ is a well-covered tree of order $2n$ and $C_n^{\ast}-f=P_n^{\ast}$, so by Corollary~\ref{cor:wctreebounds}, $C_n^{\ast}-f\preceq G-au$. On the other hand, $G-N[au]\cong P_{\ell-1}^{\ast}\cup P_{k-2}^{\ast}\cup \overline{K_2}$, which is clearly a subgraph of $P_{n-4}^{\ast}\cup \overline{K_2}\cong C_n^{\ast}-N[f]$. So by Theorem~\ref{thm:subgraphlessthangraph} and Theorem~\ref{thm:recursivepartialorder}, $C_n^{\ast}\preceq G$.

\end{proof}

\begin{corollary}
If $G$ is a connected well-covered unicyclic graph of order $2n$, $n\ge 3$, then 
$$\xi(C_n^{\ast})\le \xi(G)\le \xi(U_n^{\ast}).$$
\end{corollary}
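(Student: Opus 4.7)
The plan is to derive this corollary as an immediate consequence of the theorem stated just above. That theorem already establishes the stronger statement $C_n^{\ast}\preceq G\preceq U_n^{\ast}$ for every connected well-covered unicyclic graph $G$ of order $2n$ with $n\ge 3$, so all of the combinatorial and polynomial work has been carried out; what remains is only to translate a $\preceq$-comparison into an inequality between the largest real independence roots.

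The relevant translation is the monotonicity principle recorded in the introduction: if $H\preceq G$, then $\xi(H)\le \xi(G)$. The one-line justification is that $I(G,x)>0$ on the half-open interval $(\xi(G),0]$, so the pointwise inequality $I(H,x)\ge I(G,x)$ on $[\xi(G),0]$ forces $I(H,x)>0$ throughout $(\xi(G),0]$, meaning $H$ has no independence root in that interval and therefore $\xi(H)\le \xi(G)$. Applying this fact once to $C_n^{\ast}\preceq G$ and once to $G\preceq U_n^{\ast}$ immediately yields the chain $\xi(C_n^{\ast})\le \xi(G)\le \xi(U_n^{\ast})$.

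There is no substantive obstacle. The only minor point to verify is that the hypothesis $n\ge 3$ carries over from the preceding theorem, which is automatic since the corollary uses exactly the same hypothesis; in particular no case distinction between $G\in \mathcal{KU}$ and $G\in \mathcal{S}_4$ is needed at this stage, because that split was already absorbed into the proof of the preceding theorem. Consequently the entire proof amounts to a one-sentence invocation of the previous theorem together with the standard root-monotonicity of $\preceq$.
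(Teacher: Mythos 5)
Your proposal is correct and matches the paper's (implicit) argument exactly: the corollary is an immediate consequence of the preceding theorem together with the fact, noted in the introduction, that $H\preceq G$ implies $\xi(H)\le \xi(G)$. Nothing further is needed.
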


\subsubsection{Odd order}
For odd $n$, let $M_n$ be the graph obtained from $U_{\tfrac{n+1}{2}}$ by subdividing each of its pendant edges, see Figure~\ref{fig:Mn}.

 \begin{figure}[htb]
\def\c{0.5}
\def\r{2}
\centering
\scalebox{\c}{
\begin{tikzpicture}
\begin{scope}[every node/.style={circle,thick,draw,fill}]

     \node (13) at (-1*\r,-1*\r) {}; 
     \node (11) at (-1*\r,0*\r) {};   
    \node (6) at (-1*\r,0.75*\r) {};
    
    \node[label=left:\large $v_{\tfrac{n-1}{2}}$] (L3) at (-2*\r,-1*\r) {}; 
     \node[label=left:\large $v_2$](L2) at (-2*\r,0*\r) {};   
    \node[label=left:\large $v_1$](L1) at (-2*\r,0.75*\r) {};
    
    \node(5) at (0*\r,0*\r) {};
\node(4) at (1*\r,sqrt{2/3}*\r) {};
    \node (3) at (1*\r,-sqrt{2/3}*\r) {}; 
      
\end{scope}

\begin{scope}
   
    \path [-] (5) edge node {} (4);
    
    \path [-] (4) edge node {} (3);
    \path [-] (5) edge node {} (3);
    
    \path [-] (6) edge node {} (5);
    \path [-] (5) edge node {} (11);
    \path [-] (5) edge node {} (13);
    
      \path [-] (6) edge node {} (L1);
    \path [-] (L2) edge node {} (11);
    \path [-] (L3) edge node {} (13);
\end{scope}

\path (11) -- node[auto=false]{\vdots} (13);

\end{tikzpicture}}
\caption{The graph $M_{n}$.}%
\label{fig:Mn}%
\end{figure}
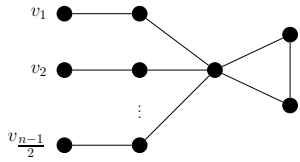

We require one more result on computing the independence polynomial before we can prove that $M_n$ is the maximum graph.

\begin{theorem}[\cite{Hoede1994}]\label{thm:cliquedeletionformula}
If $G$ is a graph with $C$ a complete subgraph, then 
$$I(G,x)=I(G-C,x)+x\sum_{v\in V(C)}I(G-N[v],x).$$
\end{theorem}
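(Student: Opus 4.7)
The plan is to partition the independent sets of $G$ according to their intersection with $V(C)$. Because $C$ is a complete subgraph, any independent set of $G$ meets $V(C)$ in at most one vertex, so we obtain the disjoint decomposition
$$\mathcal{I}(G) = \mathcal{I}_0 \;\cup\; \bigcup_{v\in V(C)} \mathcal{I}_v,$$
where $\mathcal{I}_0$ is the collection of independent sets of $G$ disjoint from $V(C)$ and $\mathcal{I}_v$ is the collection of independent sets of $G$ that contain $v$ (and thus, by the previous observation, no other vertex of $V(C)$).

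First, I would note that $\mathcal{I}_0$ is precisely the family of independent sets of the induced subgraph $G-C$, so its generating function by cardinality is $I(G-C,x)$. Next, for each fixed $v\in V(C)$, I would set up the bijection $S \longleftrightarrow S\setminus\{v\}$ between $\mathcal{I}_v$ and the set of independent sets of $G-N[v]$: deleting $v$ from $S\in\mathcal{I}_v$ produces an independent set that avoids $N(v)$ (by independence of $S$) and also avoids $v$, hence lies in $G-N[v]$; conversely, any independent set $J$ of $G-N[v]$ has $\{v\}\cup J$ independent in $G$ and containing $v$. Since this bijection shifts cardinalities up by one, the generating function of $\mathcal{I}_v$ is $x\cdot I(G-N[v],x)$.

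Adding the contributions from $\mathcal{I}_0$ and each $\mathcal{I}_v$, and using the disjointness guaranteed by $C$ being a clique, yields
$$I(G,x) = I(G-C,x) + x\sum_{v\in V(C)} I(G-N[v],x),$$
as claimed. There is no real obstacle here: the only subtle point is the clique hypothesis on $C$, which is exactly what prevents double-counting when summing over $v\in V(C)$, since otherwise an independent set could meet $V(C)$ in more than one vertex and would be counted more than once on the right-hand side.
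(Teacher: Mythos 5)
Your proof is correct and is the standard argument for this clique decomposition formula; the paper itself imports the result from the literature without proof, and your partition of the independent sets by their (at most one-element, since $C$ is complete) intersection with $V(C)$, together with the cardinality-shifting bijection onto independent sets of $G-N[v]$, is exactly the expected derivation. Nothing is missing.
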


\begin{theorem}\label{thm:wcunicyclicoddupperbound}
If $n$ is odd and $G$ is a connected well-covered unicyclic graph of order $n$, then $G\preceq M_n$.
\end{theorem}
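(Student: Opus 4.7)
The argument proceeds by case analysis based on Theorem~\ref{thm:wcunicyclicgraphs}. Since $n$ is odd, the classes $\mathcal{S}_4$ and $\mathcal{KU}$ are ruled out (they contain only even-order graphs), so $G \in \{C_3, C_5, C_7\} \cup \mathcal{S}_3 \cup \mathcal{S}_5$. The case $C_3 = M_3$ is trivial, and for $C_5, C_7$ one either verifies $C_n \prec M_n$ directly from the explicit independence polynomials or exhibits $C_n$ as a proper subgraph of a graph already shown to be $\preceq M_n$ and applies Theorem~\ref{thm:subgraphlessthangraph}.

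For $G \in \mathcal{S}_5$ with $5$-cycle $v_1 v_2 v_3 v_4 v_5$, I would reduce to the $\mathcal{S}_3$ case via an edge swap: let $G' = G - v_4 v_5 + v_2 v_5$. The $5$-cycle contracts to the triangle $v_1 v_2 v_5$, while $v_3, v_4$ together with the original well-covered branch on $v_3$ (if present) merge into an enlarged well-covered branch on $v_2$, so $G' \in \mathcal{S}_3$ of order $n$. Applying Theorem~\ref{thm:recursivepartialorder}(ii) with $e = v_2 v_5 \in E(G')$ and $f = v_4 v_5 \in E(G)$, one checks that $G - f = G' - e$ and $G - N[f] \cong G' - N[e]$, so in fact $I(G,x) = I(G',x)$ and $G \preceq G'$.

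For $G \in \mathcal{S}_3$ with triangle $abx$, let $F$ denote the (disjoint) union of the underlying forests of all well-covered branches on the triangle, with $k := (n-3)/2 = |V(F)|$. I plan to apply Theorem~\ref{thm:recursivepartialorder}(i) with $v$ a non-branch triangle vertex of $G$ (which exists since at most two of $a, b, x$ carry branches) and $u$ the corresponding non-branch triangle vertex of $M_n$. Direct computation using Theorem~\ref{thm:stargraphformula} shows that $G - N[v]$ is a disjoint union of forest-stars while $M_n - N[u] = kK_2$, and the condition $M_n - N[u] \preceq G - N[v]$ reduces, via the substitution $z = y/(1+y)$, to $I(\overline{K_k}, z) \geq I(F, z)$, which holds by Theorem~\ref{thm:subgraphlessthangraph}. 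The companion condition $G - v \preceq M_n - u$ reduces similarly to the polynomial inequality
\begin{equation*}
I(F^+, z) \;\geq\; I(K_{1,k}, z)
\end{equation*}
for an appropriate tree $F^+$ on $k+1$ vertices built from $F$ (in the single-branch subcase, $F^+$ is $F$ together with one new vertex $w$ adjacent to the branch-attachment set of the triangle). The unicyclic constraint forces the attachment set to be a transversal of the tree components of $F$, so $F^+$ is indeed a tree on $k+1$ vertices, and Theorem~\ref{thm:treebounds} delivers $F^+ \preceq S_{k+1} = K_{1,k}$, as required. A short calculation confirms that the range of $z$ arising from $[\xi(M_n - u), 0]$ matches $[\xi(K_{1,k}), 0]$ exactly, since both ranges are determined by the equation $(1+z)^k + z = 0$.

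The main obstacle is the two-branch subcase of $\mathcal{S}_3$, where the auxiliary tree $F^+$ must be built from both forests (together with an extra edge arising from the triangle edge between the two branch vertices) so that the single-branch calculation extends uniformly. Verifying that this enlarged tree still satisfies the required comparison with $K_{1,k}$---or, alternatively, reducing the two-branch case to the single-branch case by iterated applications of Theorem~\ref{thm:boldstaroperation} to consolidate branches onto one triangle vertex---is expected to be the most delicate step of the proof.
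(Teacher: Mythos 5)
Your treatment of the single-branch $\mathcal{S}_3$ case is correct and is essentially the paper's Case~1 (the paper invokes Corollary~\ref{cor:wctreebounds} for $G-v\preceq M_n-u$ and a subgraph comparison for $M_n-N[u]\preceq G-N[v]$ rather than unwinding Theorem~\ref{thm:stargraphformula} by hand, but the computation is the same). Your $\mathcal{S}_5\to\mathcal{S}_3$ edge swap is a genuinely different and rather clean alternative to the paper's separate girth-$5$ arguments: with branches placed on $v_1$ and/or $v_3$, one checks $G-v_4v_5=G'-v_2v_5$ and $G-N[v_4v_5]\cong G'-N[v_2v_5]$ (both are the disjoint union of the branch forests' stars with one isolated vertex), so Theorem~\ref{thm:recursivepartialorder}(ii) gives $G\preceq G'$ with $G'$ of $\mathcal{S}_3$ shape.

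The gap is exactly where you feared, and neither of your proposed fixes closes it. In the two-branch $\mathcal{S}_3$ case, with branches on $a$ and $b$ and $v=c$ the free triangle vertex, $G-c$ is \emph{not} a graph star: $a$ is adjacent only to $b$ and to non-leaf attachment vertices of $F_a^{\ast}$, and $b$ likewise has degree at least $2$, so $a$ has no pendant neighbour (and dimensionally $G-c$ has $2k+2$ vertices, so it cannot be $(F^+)^{\ast}$ for any graph $F^+$ containing both $a$ and $b$ as core vertices). Hence the substitution via Theorem~\ref{thm:stargraphformula} is unavailable, and since $G-c$ is not well-covered, Corollary~\ref{cor:wctreebounds} does not give $G-c\preceq S_{k+1}^{\ast}$ either; note $S_{k+1}^{\ast}$ is maximal only among well-covered trees of order $2k+2$, not among all trees of that order. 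Your fallback of consolidating the branches onto one triangle vertex via Theorem~\ref{thm:boldstaroperation} also fails its hypotheses: to realize $G=(G')^{\star}_{u,v,w}$ with $G'$ the consolidated graph, the vertex playing the role of the leaf $u$ would have to be the triangle vertex $b$, which has degree at least $2$ in $G'$. The paper closes this case with a different device: it first replaces each branch in turn by a bundle of pendant paths attached to the triangle vertex (the chain $G\preceq G'\preceq G''$, proved via Theorem~\ref{thm:recursivepartialorder} together with Corollary~\ref{cor:wctreebounds} and subgraph comparisons of the form $\ell K_2\subseteq G_2$), and then compares $G''$ with $M_n$ directly by expanding both polynomials along the triangle with Theorem~\ref{thm:cliquedeletionformula}, which factors the difference as $x\bigl(I(\ell K_2,x)-I(\ell K_1,x)\bigr)\bigl(I(kK_1,x)-I(kK_2,x)\bigr)\ge 0$. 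You need an argument of this kind (or some other genuinely new step) before the two-branch case, and hence the theorem, is proved.
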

\begin{proof}
Let $n\ge 3$ be odd and $G$ be a connected well-covered unicyclic graph of order $n$. The proof is in cases depending on the number of vertices of degree greater than $2$ in the cycle of $G$.

\vspace{2mm}
\noindent\textbf{Case 1:} The cycle of $G$ has exactly $1$ vertex of degree greater than $2$. 
\vspace{2mm}

Suppose that $\operatorname{girth}(G)=3$ and let $v$ be a vertex of degree $2$ on the cycle. Let $u$ be a vertex of degree $2$ on the cycle in $M_n$. By definition of graphs in $\mathcal{S}_3$, $G-v$ is a well-covered tree of order $n-1$ and $M_n-u=S_{\frac{n-1}{2}}^{\ast}$. Hence, by Corollary~\ref{cor:wctreebounds}, $G-v\preceq M_n-u$. On the other hand, $G-N[v]$ is a well-covered forest of order $n-3$ and $M_n-N[u]=\frac{n-3}{2}K_2$. Thus, by Corollary~\ref{cor:wctreesaregraphstars}, $M_n-N[u]$ is a subgraph of $G-N[v]$. So by Theorem~\ref{thm:subgraphlessthangraph}, $M_n-N[u]\preceq G-N[v]$. Finally, by Theorem~\ref{thm:recursivepartialorder}, $G\preceq M_n$. We note that if $\operatorname{girth}(G)=5$, then a similar argument shows that $G\preceq M_n$ by choosing $u$ to be a vertex of degree $2$ with all neighbours of degree $2$ on the cycle of $G$.

\vspace{2mm}
\noindent\textbf{Case 2:} The cycle of $G$ has exactly $2$ vertices of degree greater than $2$. 
\vspace{2mm}

We first suppose that $\operatorname{girth}(G)=3$. Let $\{u,v,w\}$ be the vertices of the $3$-cycle in $G$ such that $\deg(w)=2$. From Theorem~\ref{thm:wcunicyclicgraphs}, $G$ has two well-covered branches $S_1$ and $S_2$ induced by $v$ and $u$, respectively. Let $G_1=G[S_1-\{v\}]$ and $G_2=G[S_2-\{u\}]$. Note $G_1=F_1^{\ast}$ and $G_2=F_2^{\ast}$ where $F_1$ and $F_2$ are forests. Suppose $|V(G_1)|=2k$ and $|V(G_2)|=2\ell$ for some positive integers $k$ and $\ell$. Now let $G'$ be the graph obtained from $G$ by deleting all non-pendant edges from $G_2$ and then joining a single vertex in every resulting $K_2$ component to $u$ (i.e. replacing $G_2$ with $kK_2$ and joining a vertex from each $K_2$ to $u$, see Figure~\ref{fig:GG'G''}). We now have, 
\begin{align}
I(G-v,x)-I(G'-v,x)&=I(G_1,x)\left(I(G[V(G_2)\cup\{w\}],x)-I(S_{\ell+1}^{\ast},x)\right). \label{eq:G-vprime}
\end{align}

\setcounter{subfigure}{0}
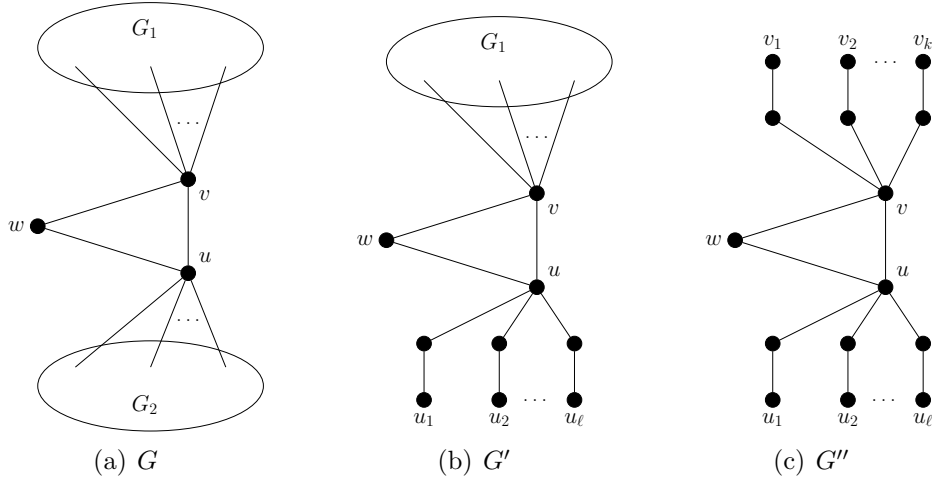
\begin{figure}[!h]
\def\c{0.5}
\def\r{1}
\centering
\subfigure[$G$]{
\scalebox{\c}{
\begin{tikzpicture}
\draw (-1,4.5) ellipse (3 and 1.2);
\node[text width=1cm] at (-1,5) {\Large $G_1$};

\node[shape=circle,draw=black,fill=black, label=above right:\Large $u$] (r2) at (0,-1.5) {};
\node[shape=circle,draw=black,fill=black,label=below right:\Large $v$] (r1) at (0,1) {};
\node[shape=circle,draw=black,fill=black,label=left:\Large $w$] (r3) at (-4,-0.25) {};

\draw (-1,-4.5) ellipse (3 and 1.2);
\node[text width=1cm] at (-1,-5) {\Large $G_2$};

\begin{scope}
    \path [-] (r1) edge node {} (r2);
    \path [-] (r3) edge node {} (r2);
    \path [-] (r1) edge node {} (r3); 
    \path [-] (r2) edge node {} (-3,-4);
    \path [-] (r2) edge node {} (-1,-4);
    \path [-] (r2) edge node {} (1,-4); 
    \path [-] (r1) edge node {} (-3,4);
    \path [-] (r1) edge node {} (-1,4);
    \path [-] (r1) edge node {} (1,4); 
\end{scope}

\begin{scope}
    \path (-1,-2.75) -- node[auto=false]{\Large \ldots} (1.1,-2.75);
    \path (-1,2.5) -- node[auto=false]{\Large \ldots} (1.1,2.5);
\end{scope}
\end{tikzpicture}}}
\qquad
\subfigure[$G'$]{
\scalebox{\c}{
\begin{tikzpicture}

\draw (-1,4.5) ellipse (3 and 1.2);
\node[text width=1cm] at (-1,5) {\Large $G_1$};

\node[shape=circle,draw=black,fill=black, label=above right:\Large $u$] (r2) at (0,-1.5) {};
\node[shape=circle,draw=black,fill=black,label=below right:\Large $v$] (r1) at (0,1) {};
\node[shape=circle,draw=black,fill=black,label=left:\Large $w$] (r3) at (-4,-0.25) {};

\node[shape=circle,draw=black,fill=black] (v1) at (-3,-3) {};
\node[shape=circle,draw=black,fill=black,label=below:\Large $u_{1}$] (u1) at (-3,-4.5) {};

\node[shape=circle,draw=black,fill=black] (v2) at (-1,-3) {};
\node[shape=circle,draw=black,fill=black,label=below:\Large $u_{2}$] (u2) at (-1,-4.5) {};

\node[shape=circle,draw=black,fill=black] (vk) at (1,-3) {};
\node[shape=circle,draw=black,fill=black,label=below:\Large $u_{\ell}$] (uL) at (1,-4.5) {};

\begin{scope}
    \path [-] (r1) edge node {} (r2);
    \path [-] (r3) edge node {} (r2);
    \path [-] (r1) edge node {} (r3); 
    \path [-] (r2) edge node {} (v1);
    \path [-] (r2) edge node {} (v2);
    \path [-] (r2) edge node {} (vk); 
    \path [-] (r1) edge node {} (-3,4);
    \path [-] (r1) edge node {} (-1,4);
    \path [-] (r1) edge node {} (1,4); 
    
    \path [-] (u1) edge node {} (v1);
    \path [-] (u2) edge node {} (v2);
    \path [-] (uL) edge node {} (vk);
\end{scope}

\begin{scope}
    \path (-1,-4.5) -- node[auto=false]{\Large \ldots} (1,-4.5);
    \path (-1,2.5) -- node[auto=false]{\Large \ldots} (1.1,2.5);
\end{scope}
\end{tikzpicture}}}
\qquad
\subfigure[$G''$]{
\scalebox{\c}{
\begin{tikzpicture}

\node[shape=circle,draw=black,fill=black] (1v1) at (-3,3) {};
\node[shape=circle,draw=black,fill=black,label=above:\Large $v_{1}$] (v11) at (-3,4.5) {};

\node[shape=circle,draw=black,fill=black] (2v2) at (-1,3) {};
\node[shape=circle,draw=black,fill=black,label=above:\Large $v_{2}$] (v22) at (-1,4.5) {};

\node[shape=circle,draw=black,fill=black] (kvk) at (1,3) {};
\node[shape=circle,draw=black,fill=black,label=above:\Large $v_{k}$] (vkk) at (1,4.5) {};

\node[shape=circle,draw=black,fill=black, label=above right:\Large $u$] (r2) at (0,-1.5) {};
\node[shape=circle,draw=black,fill=black,label=below right:\Large $v$] (r1) at (0,1) {};
\node[shape=circle,draw=black,fill=black,label=left:\Large $w$] (r3) at (-4,-0.25) {};

\node[shape=circle,draw=black,fill=black] (v1) at (-3,-3) {};
\node[shape=circle,draw=black,fill=black,label=below:\Large $u_{1}$] (u1) at (-3,-4.5) {};

\node[shape=circle,draw=black,fill=black] (v2) at (-1,-3) {};
\node[shape=circle,draw=black,fill=black,label=below:\Large $u_{2}$] (u2) at (-1,-4.5) {};

\node[shape=circle,draw=black,fill=black] (vk) at (1,-3) {};
\node[shape=circle,draw=black,fill=black,label=below:\Large $u_{\ell}$] (uL) at (1,-4.5) {};

\begin{scope}
    \path [-] (r1) edge node {} (r2);
    \path [-] (r3) edge node {} (r2);
    \path [-] (r1) edge node {} (r3); 
    \path [-] (r2) edge node {} (v1);
    \path [-] (r2) edge node {} (v2);
    \path [-] (r2) edge node {} (vk); 
    \path [-] (r1) edge node {} (1v1);
    \path [-] (r1) edge node {} (2v2);
    \path [-] (r1) edge node {} (kvk); 
    
    \path [-] (u1) edge node {} (v1);
    \path [-] (u2) edge node {} (v2);
    \path [-] (uL) edge node {} (vk);
    
    \path [-] (v11) edge node {} (1v1);
    \path [-] (v22) edge node {} (2v2);
    \path [-] (vkk) edge node {} (kvk);
\end{scope}

\begin{scope}
    \path (-1,-4.5) -- node[auto=false]{\Large \ldots} (1,-4.5);
    \path (-1,4.5) -- node[auto=false]{\Large \ldots} (1.1,4.5);
\end{scope}
\end{tikzpicture}}}
\caption{Graphs used in the proof of Theorem~\ref{thm:wcunicyclicoddupperbound}.}%
\label{fig:GG'G''}%
\end{figure}

It is clear that $G[V(G_2)\cup\{w\}]$ is a well-covered tree of order $2\ell+2$, so by Corollary~\ref{cor:wctreebounds} it follows that $G[V(G_2)\cup\{w\}]\preceq S_{\ell+1}^{\ast}$. Since $G_1$ is a subgraph of $G'-v$, it follows that $\xi(G_1)\le \xi(G'-v)$, so $I(G_1,x)\ge 0$ for all $x\in [\xi(G'-v),0]$. Therefore, (\ref{eq:G-vprime}) is at least $0$ so $G-v\preceq G'-v$. Further, 
\begin{align}
I(G-N[v],x)-I(G'-N[v],x)&=I(G_1-N[v],x)\left(I(G_2,x)-I(\ell K_2,x)\right). \label{eq:G-[v]prime}
\end{align}

Since $G_1-N[v]$ is a subgraph of $G'-N[v]$, it follows that $\xi(G_1-N[v])\le \xi(G'-N[v])$, so $I(G_1-N[v],x)\ge 0$ for all $x\in [\xi(G'-N[v]),0]$. Also, $G_2$ has a perfect matching, so $\ell K_2$ is a subgraph of $G_2$. So by Theorem~\ref{thm:subgraphlessthangraph}, $I(G_2,x)-I(\ell K_2,x)\le 0$ for all $x\in [\xi(G'-N[v],0]$. Thus, from (\ref{eq:G-[v]prime}), $G'-N[v]\preceq G-N[v]$.  Finally, we can conclude that $G\preceq G'$ by Theorem~\ref{thm:recursivepartialorder}. 

Let $G''$ be the graph obtained from $G'$ by deleting all non-pendant edges from $G_1$ and then joining a single vertex in any resulting $K_2$ components to $v$, see Figure~\ref{fig:GG'G''}. From the work done in the last few paragraphs, it follows that $G'\preceq G''$. 

To complete the proof, all that remains is to show that $G''\preceq M_n$. We apply Theorem~\ref{thm:cliquedeletionformula} to $G''$ and $M_n$ where the clique chosen in each case is the unique triangle. With these reductions it follows that for all $x\in [\xi(M_n),0]$,
$$
I(G'',x)-I(M_n,x)=x\left(I(\ell K_2,x)-I(\ell K_1,x) \right)\left(I(k K_1,x)-I(k K_2,x) \right) \ge 0.
$$
Therefore, $G''\preceq M_n$.

Except for the application of Theorem~\ref{thm:cliquedeletionformula}, every step in Case 2 can be done similarly if $\operatorname{girth}(G)=5$. In other words, it suffices to show for every graph $F\in \mathcal{S}_5$ of the form depicted in Figure~\ref{fig:girth5final} that $F\preceq G''$ where $G''$ is the graph from earlier in the proof, see Figure~\ref{fig:GG'G''}. However, $G^{\prime\prime\star}_{v_k,v,y}=F$, so $F\preceq G''$ by Theorem~\ref{thm:boldstaroperation}.

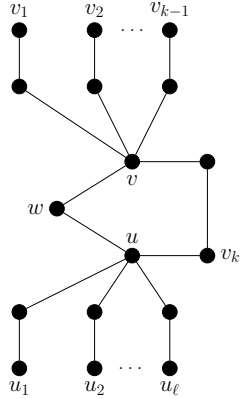
\begin{figure}[htb]
\def\c{0.5}
\centering
\scalebox{\c}{
\begin{tikzpicture}

\begin{scope}
	\node[shape=circle,draw=black,fill=black] (1v1) at (-3,3) {};
	\node[shape=circle,draw=black,fill=black,label=above:\Large $v_{1}$] (v11) at (-3,4.5) {};

	\node[shape=circle,draw=black,fill=black] (2v2) at (-1,3) {};
	\node[shape=circle,draw=black,fill=black,label=above:\Large $v_{2}$] (v22) at (-1,4.5) {};

	\node[shape=circle,draw=black,fill=black] (kvk) at (1,3) {};
	\node[shape=circle,draw=black,fill=black,label=above:\Large $v_{k-1}$] (vkk) at (1,4.5) {};

	\node[shape=circle,draw=black,fill=black,label=below:\Large $v$] (r1) at (0,1) {};
	\node[shape=circle,draw=black,fill=black,label=left:\Large $w$] (r3) at (-2,-0.25) {};
	\node[shape=circle,draw=black,fill=black, label=above:\Large $u$] (r2) at (0,-1.5) {};
	\node[shape=circle,draw=black,fill=black,label=right:\Large $v_k$] (r4) at (2,-1.5) {};
	\node[shape=circle,draw=black,fill=black] (r5) at (2,1) {};

	\node[shape=circle,draw=black,fill=black] (v1) at (-3,-3) {};
	\node[shape=circle,draw=black,fill=black,label=below:\Large $u_{1}$] (u1) at (-3,-4.5) {};

	\node[shape=circle,draw=black,fill=black] (v2) at (-1,-3) {};
	\node[shape=circle,draw=black,fill=black,label=below:\Large $u_{2}$] (u2) at (-1,-4.5) {};

	\node[shape=circle,draw=black,fill=black] (vk) at (1,-3) {};
	\node[shape=circle,draw=black,fill=black,label=below:\Large $u_{\ell}$] (uL) at (1,-4.5) {};
\end{scope}

\begin{scope}
    \path [-] (r1) edge node {} (r5);
    \path [-] (r4) edge node {} (r5);
    \path [-] (r4) edge node {} (r2);
    \path [-] (r3) edge node {} (r2);
    \path [-] (r1) edge node {} (r3); 
    \path [-] (r2) edge node {} (v1);
    \path [-] (r2) edge node {} (v2);
    \path [-] (r2) edge node {} (vk); 
    \path [-] (r1) edge node {} (1v1);
    \path [-] (r1) edge node {} (2v2);
    \path [-] (r1) edge node {} (kvk); 
    
    \path [-] (u1) edge node {} (v1);
    \path [-] (u2) edge node {} (v2);
    \path [-] (uL) edge node {} (vk);
    
    \path [-] (v11) edge node {} (1v1);
    \path [-] (v22) edge node {} (2v2);
    \path [-] (vkk) edge node {} (kvk);
\end{scope}

\begin{scope}
    \path (-1,-4.5) -- node[auto=false]{\Large \ldots} (1,-4.5);
    \path (-1,4.5) -- node[auto=false]{\Large \ldots} (1.1,4.5);
\end{scope}
\end{tikzpicture}}
\caption{$F\in\mathcal{S}_5$.}\label{fig:girth5final}
\end{figure}

\end{proof}

Let $G(g,k,\ell)$ be a graph obtained from a cycle $C=\{c_1,c_2,\ldots c_{g}\}$, $g=3$ or $g=5$, where $c_1$ and $c_3$ each induce well-covered branches isomorphic to $P_{k}^{\ast}$ and $P_{\ell}^{\ast}$ respectively and the unique neighbour of $c_1$ in the $P_{k}^{\ast}$ has degree $3$ in $G(g,k,\ell)$, respectively for $c_3$ and its unique neighbour in the $P_{\ell}^{\ast}$. We require that $k>0$ or $\ell>0$. Let $\mathcal{S}_P$ be the set of all $G(g,k,\ell)$ for every combination of $g,k,\ell$.  Note that $\mathcal{S}_P\subseteq \mathcal{S}_3\cup \mathcal{S}_5$. Examples of graphs in $\mathcal{S}_P$ can be found throughout the proof of Lemma \ref{lem:equivofoddwcuniminimal} in Figures~\ref{fig:BaseCase1} to \ref{fig:5Casek}.

We will now show that every graph of equal order in $\mathcal{S}_P$ has the same independence polynomial. First we will require the following result.

\begin{prop}[\cite{INDFIRST}]\label{prop:deletion}
Let $G$ and $H$ be graphs and $v\in V(G)$. Then
\begin{itemize}
\item[(i)] $I(G,x)=I(G-v,x)+xI(G-N[v],x)$,
\item[(ii)] $I(G\cup H,x)=I(G,x)\cdot I(H,x)$.
\end{itemize}
\end{prop}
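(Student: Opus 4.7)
The plan is to prove both identities by partitioning or factoring the set of independent sets, then reading off the generating polynomial identity coefficient by coefficient. Both parts are purely combinatorial, with no need to invoke any structural hypothesis on the graphs involved.

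For part (i), I would partition the independent sets of $G$ according to whether they contain the fixed vertex $v$. Any independent set $S$ with $v \notin S$ is exactly an independent set of $G-v$, and conversely every independent set of $G-v$ is an independent set of $G$ not containing $v$. On the other hand, any independent set $S$ with $v \in S$ must contain no neighbour of $v$, so $S \setminus \{v\}$ is an independent set of $G - N[v]$; conversely, any independent set of $G - N[v]$ together with $v$ forms an independent set of $G$ of one larger size. Thus, if $i_k$ denotes the number of size-$k$ independent sets of $G$, then $i_k$ equals the number of size-$k$ independent sets of $G - v$ plus the number of size-$(k-1)$ independent sets of $G - N[v]$. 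Multiplying by $x^k$ and summing over $k$ yields $I(G,x) = I(G-v,x) + xI(G-N[v],x)$, noting that the shift $k \mapsto k-1$ on the second term produces the factor of $x$.

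For part (ii), I would use the observation that because $G$ and $H$ share no vertices and no edges in the disjoint union $G \cup H$, a set $S \subseteq V(G) \cup V(H)$ is independent in $G \cup H$ if and only if $S \cap V(G)$ is independent in $G$ and $S \cap V(H)$ is independent in $H$, with no interaction between the two. Hence the independent sets of $G \cup H$ of size $k$ are in bijection with pairs $(A,B)$ where $A$ is an independent set of $G$, $B$ is an independent set of $H$, and $|A|+|B|=k$. Summing yields the convolution $i_k(G \cup H) = \sum_{j=0}^{k} i_j(G)\, i_{k-j}(H)$, which is precisely the coefficient of $x^k$ in the product $I(G,x)\cdot I(H,x)$.

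There is no genuine obstacle here, since this is a foundational identity from the original Gutman–Harary paper; the only care required is keeping the index shift in part (i) straight, so that the $+1$ in the size of the independent set $S = \{v\} \cup S'$ with $S' \subseteq V(G) - N[v]$ corresponds correctly to the factor of $x$ in the recurrence.
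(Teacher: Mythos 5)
Your proof is correct: the partition of independent sets by membership of $v$ (with the bijection $S \mapsto S\setminus\{v\}$ onto independent sets of $G-N[v]$ handling the index shift) gives (i), and the convolution of coefficients for the disjoint union gives (ii). The paper cites this result from Gutman and Harary without proof, and your argument is the standard one that appears in that reference.
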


\begin{lemma}\label{lem:equivofoddwcuniminimal}
Let $G ,H \in \mathcal{S}_P$. If $G$ and $H$ have the number of vertices then $I(G,x)=I(H,x).$
\end{lemma}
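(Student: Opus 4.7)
My plan is to establish the lemma by computing $I(G(g,k,\ell),x)$ in closed form for each $G=G(g,k,\ell)\in\mathcal{S}_P$ and showing that the resulting expression depends only on $|V(G)|=g+2(k+\ell)$. The central auxiliary quantity will be $p_k:=I(P_k^{\ast},x)$. Applying Proposition~\ref{prop:deletion}(i) to $P_{k+1}^{\ast}$ at a spine endpoint $w_{k+1}$ (so that $P_{k+1}^{\ast}-w_{k+1}=P_k^{\ast}\cup K_1$ and $P_{k+1}^{\ast}-N[w_{k+1}]=P_{k-1}^{\ast}\cup K_1$) produces the three-term recurrence
\[
p_{k+1}=(1+x)\,p_k+x(1+x)\,p_{k-1},\qquad k\ge 1,
\]
with $p_0=1$ and $p_1=1+2x$.

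Next I would compute the two family formulas. For $G(3,k,\ell)$ with $k,\ell\ge 1$, apply Theorem~\ref{thm:cliquedeletionformula} to the triangle $C=\{c_1,c_2,c_3\}$: each of $G-C$ and $G-N[c_i]$ decomposes as a disjoint union of $P_{\cdot}^{\ast}$ pieces and at most one isolated vertex, yielding
\[
I(G(3,k,\ell),x)=(1+x)\bigl[p_kp_\ell+x(p_{k-1}p_\ell+p_kp_{\ell-1})\bigr].
\]
For $G(5,k,\ell)$ there is no triangle, so instead I apply Proposition~\ref{prop:deletion}(i) at the cycle-vertex $c_2$ (giving $G-N[c_2]=K_2\cup P_k^{\ast}\cup P_\ell^{\ast}$) and then a second time at $c_5$ inside the tree $G-c_2$. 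The key structural observation is that the leaf $c_4$ attached to $c_3$ together with the $P_\ell^{\ast}$-branch at $c_3$ is isomorphic to $P_{\ell+1}^{\ast}$; after simplification this gives
\[
I(G(5,k,\ell),x)=p_{k+1}p_{\ell+1}+x(1+x)\,p_kp_\ell.
\]
The boundary cases $k=0$ or $\ell=0$ (where the spine-endpoint neighbour $w_1$ or $w_1'$ is absent) can be handled by parallel direct calculations.

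The core step is then to establish the two shift-invariance identities
\[
I(G(3,k,\ell),x)=I(G(3,k-1,\ell+1),x)\qquad\text{and}\qquad I(G(5,k,\ell),x)=I(G(5,k-1,\ell+1),x),
\]
both of which fall out by straightforward algebraic manipulation of the closed forms after substituting the rearranged recurrence $p_k+xp_{k-1}=p_{k+1}/(1+x)$. Iterating these shifts reduces every graph to the canonical form $G(g,k+\ell,0)$, and a final direct check using the boundary formulas confirms
\[
I(G(3,m+1,0),x)=(1+2x)p_{m+1}+x(1+x)p_m=I(G(5,m,0),x),
\]
linking the $g=3$ and $g=5$ subfamilies of matching order. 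Thus $I(G,x)$ depends only on $|V(G)|$, completing the proof.

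I expect the main obstacle to be the two-step cycle deletion for $G(5,k,\ell)$: the absence of a triangle rules out a one-shot application of Theorem~\ref{thm:cliquedeletionformula}, and one must correctly identify the resulting tree components, in particular the copy of $P_{\ell+1}^{\ast}$ produced when the cycle vertex $c_4$ becomes a leaf absorbed into the $P_\ell^{\ast}$-branch at $c_3$. A secondary nuisance is handling the boundary cases $k=0$ or $\ell=0$ separately, since the standard formulas reference $p_{k-1}$ when $k=0$.
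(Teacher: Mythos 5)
Your proposal is correct, but it takes a genuinely different route from the paper. The paper never writes down a formula for $I(G(g,k,\ell),x)$: it proves the equivalences $G(3,k,\ell)\sim G(3,k+\ell,0)$, $G(5,k,0)\sim G(3,k+1,0)$, and $G(5,k,\ell)\sim G(3,k+1,\ell)$ by repeatedly applying Proposition~\ref{prop:deletion} at a well-chosen vertex and observing that the resulting pairs $G-v$, $G-N[v]$ are either isomorphic or independence equivalent by the inductive hypothesis --- a purely structural induction on $\ell$. You instead compute closed forms in terms of $p_k=I(P_k^{\ast},x)$ and reduce everything to the recurrence $p_{k+1}=(1+x)p_k+x(1+x)p_{k-1}$. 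I checked your key formulas and they are all correct: the clique-deletion computation gives $I(G(3,k,\ell),x)=(1+x)\bigl[p_kp_\ell+x(p_{k-1}p_\ell+p_kp_{\ell-1})\bigr]$ for $k,\ell\ge 1$; the two-step deletion at $c_2$ and then $c_5$ gives $I(G-c_2,x)=p_{k+1}p_{\ell+1}-x^2p_kp_\ell$ and hence $I(G(5,k,\ell),x)=p_{k+1}p_{\ell+1}+x(1+x)p_kp_\ell$ (your identification of the $P_{\ell+1}^{\ast}$ component, and of the other component as $P_k^{\ast}$ with a pendant vertex whose polynomial is $p_{k+1}-xp_k$, is the right structural observation); the shift invariances then follow from $p_{k}+xp_{k-1}=p_{k+1}/(1+x)$ and $p_{k+1}-x(1+x)p_{k-1}=(1+x)p_k$; and the bridge $I(G(3,m+1,0),x)=(1+2x)p_{m+1}+x(1+x)p_m=I(G(5,m,0),x)$ correctly matches the orders $3+2(m+1)=5+2m$. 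What your approach buys is explicit polynomial formulas for every graph in $\mathcal{S}_P$ (potentially useful for locating $\xi$ in Theorem~\ref{thm:unioddmin}), at the cost of more computational bookkeeping, especially the separate treatment of the boundary cases $k=0$ or $\ell=0$ where $p_{k-1}$ is undefined; the paper's induction avoids all closed forms but gives no formula. Do make sure, when writing this up, to actually carry out the boundary-case computations rather than only flagging them, since the canonical targets $G(g,m,0)$ of your reduction live entirely in that boundary.
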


\begin{proof}
Recall $G\sim H$ denotes that $G$ and $H$ are independence equivalent. Note that for vertices $u \in V(G)$ and $u \in V(H)$, if $G-u \sim H-v$ and $G-N[u] \sim H-N[v]$ then $G \sim H$ by Proposition \ref{prop:deletion}. We will begin by showing $G(3,k+\ell,0) \sim G(3,k,\ell)$ for all $k,\ell>0$. It is sufficient to show for every fixed $k>0$, $G(3,k+\ell,0) \sim G(3,k,\ell)$ for all $\ell > 0$. We will show this through induction on $\ell$. 

Fix a $k>0$ and let $\ell = 1$. Let $v_1$ and $u_{k+1}$ be the vertices in $G(3,k,1)$ and $G(3,k+1,0)$, respectively illustrated in Figure~\ref{fig:BaseCase1}.

\setcounter{subfigure}{0}
\begin{figure}[!h]
\def\c{0.45}
\def\r{1}
\centering
\subfigure[$G(3,k,1)$]{
\scalebox{\c}{
\begin{tikzpicture}
\node[shape=circle,draw=black,fill=black,label=below:\Large $c_{1}$] (1) at (0,0) {};
\node[shape=circle,draw=black,fill=black,label=left:\Large $c_{2}$] (2) at (1,2) {};
\node[shape=circle,draw=black,fill=black,label=below:\Large $c_{3}$] (3) at (2,0) {};

\node[shape=circle,draw=black,fill=black,label=below:\Large $u_{1}$] (4) at (4,0) {};
\node[shape=circle,draw=black,fill=black] (4l) at (4,2) {};
\node[shape=circle,draw=black,fill=black,label=below:\Large $u_{2}$] (5) at (6,0) {};
\node[shape=circle,draw=black,fill=black] (5l) at (6,2) {};
\node[shape=circle,draw=black,fill=black,label=below:\Large $u_{k}$] (6) at (10,0) {};
\node[shape=circle,draw=black,fill=black] (6l) at (10,2) {};
\node[shape=circle,draw=black,fill=black,label=below:\Large $v_{1}$] (7) at (-2,0) {};
\node[shape=circle,draw=black,fill=black] (7l) at (-2,2) {};

\begin{scope}
    \path [-] (1) edge node {} (2);
    \path [-] (1) edge node {} (3);
    \path [-] (2) edge node {} (3);
    \path [-] (3) edge node {} (4);
    \path [-] (4) edge node {} (5);
    \path [-] (5) edge node {} (7,0);
    \path [-] (9,0) edge node {} (6);
    \path [-] (1) edge node {} (7);

    \path [-] (4) edge node {} (4l);
    \path [-] (5) edge node {} (5l);
    \path [-] (6) edge node {} (6l);
    \path [-] (7) edge node {} (7l);
\end{scope}

\begin{scope}
    \path (7,0) -- node[auto=false]{\Large \ldots} (9,0);
\end{scope}
\end{tikzpicture}}}
\qquad
\subfigure[$G(3,k+1,0)$]{
\scalebox{\c}{
\begin{tikzpicture}
\node[shape=circle,draw=black,fill=black,label=below:\Large $c_{1}$] (1) at (0,0) {};
\node[shape=circle,draw=black,fill=black,label=left:\Large $c_{2}$] (2) at (1,2) {};
\node[shape=circle,draw=black,fill=black,label=below:\Large $c_{3}$] (3) at (2,0) {};

\node[shape=circle,draw=black,fill=black,label=below:\Large $u_{1}$] (4) at (4,0) {};
\node[shape=circle,draw=black,fill=black] (4l) at (4,2) {};
\node[shape=circle,draw=black,fill=black,label=below:\Large $u_{2}$] (5) at (6,0) {};
\node[shape=circle,draw=black,fill=black] (5l) at (6,2) {};
\node[shape=circle,draw=black,fill=black,label=below:\Large $u_{k}$] (6) at (10,0) {};
\node[shape=circle,draw=black,fill=black] (6l) at (10,2) {};
\node[shape=circle,draw=black,fill=black,label=below:\Large $u_{k+1}$] (7) at (12,0) {};
\node[shape=circle,draw=black,fill=black] (7l) at (12,2) {};

\begin{scope}
    \path [-] (1) edge node {} (2);
    \path [-] (1) edge node {} (3);
    \path [-] (2) edge node {} (3);
    \path [-] (3) edge node {} (4);
    \path [-] (4) edge node {} (5);
    \path [-] (5) edge node {} (7,0);
    \path [-] (9,0) edge node {} (6);
    \path [-] (6) edge node {} (7);

    \path [-] (4) edge node {} (4l);
    \path [-] (5) edge node {} (5l);
    \path [-] (6) edge node {} (6l);
    \path [-] (7) edge node {} (7l);
\end{scope}

\begin{scope}
    \path (7,0) -- node[auto=false]{\Large \ldots} (9,0);
\end{scope}
\end{tikzpicture}}}
\caption{}%
\label{fig:BaseCase1}%
\end{figure}

\noindent Note that $G(3,k,1) - v_1 \cong $ $G(3,k+1,0) - u_{k+1} $ and hence $G(3,k,1) - v_1 \sim $ $G(3,k+1,0) - u_{k+1}$. For simplicity let $G'(3,k,1)$ and $G'(3,k+1,0)$ denote $G(3,k,1) - N[v_1]$  and $G(3,k+1,0) - N[u_{k+1}]$ respectively. Now let $u_{k}$ and $c_1$ be the vertices illustrated in Figure~\ref{fig:BaseCase2} in $G'(3,k,1)$ and $G'(3,k+1,0)$, respectively.

\setcounter{subfigure}{0}
\begin{figure}[!h]
\def\c{0.45}
\def\r{1}
\centering
\subfigure[$G'(3,k,1)$]{
\scalebox{\c}{
\begin{tikzpicture}
\node[shape=circle,draw=black,fill=black,label=left:\Large $c_{2}$] (2) at (2,2) {};
\node[shape=circle,draw=black,fill=black,label=below:\Large $c_{3}$] (3) at (2,0) {};

\node[shape=circle,draw=black,fill=black,label=below:\Large $u_{1}$] (4) at (4,0) {};
\node[shape=circle,draw=black,fill=black] (4l) at (4,2) {};
\node[shape=circle,draw=black,fill=black,label=below:\Large $u_{2}$] (5) at (6,0) {};
\node[shape=circle,draw=black,fill=black] (5l) at (6,2) {};
\node[shape=circle,draw=black,fill=black,label=below:\Large $u_{k}$] (6) at (10,0) {};
\node[shape=circle,draw=black,fill=black] (6l) at (10,2) {};

\begin{scope}
    \path [-] (2) edge node {} (3);
    \path [-] (3) edge node {} (4);
    \path [-] (4) edge node {} (5);
    \path [-] (5) edge node {} (7,0);
    \path [-] (9,0) edge node {} (6);

    \path [-] (4) edge node {} (4l);
    \path [-] (5) edge node {} (5l);
    \path [-] (6) edge node {} (6l);
\end{scope}

\begin{scope}
    \path (7,0) -- node[auto=false]{\Large \ldots} (9,0);
\end{scope}
\end{tikzpicture}}}
\qquad
\subfigure[$G'(3,k+1,0)$]{
\scalebox{\c}{
\begin{tikzpicture}
\node[shape=circle,draw=black,fill=black,label=below:\Large $c_{1}$] (1) at (0,0) {};
\node[shape=circle,draw=black,fill=black,label=left:\Large $c_{2}$] (2) at (1,2) {};
\node[shape=circle,draw=black,fill=black,label=below:\Large $c_{3}$] (3) at (2,0) {};

\node[shape=circle,draw=black,fill=black,label=below:\Large $u_{1}$] (4) at (4,0) {};
\node[shape=circle,draw=black,fill=black] (4l) at (4,2) {};
\node[shape=circle,draw=black,fill=black,label=below:\Large $u_{2}$] (5) at (6,0) {};
\node[shape=circle,draw=black,fill=black] (5l) at (6,2) {};
\node[shape=circle,draw=black,fill=black,label=below:\Large $u_{k-1}$] (6) at (10,0) {};
\node[shape=circle,draw=black,fill=black] (6l) at (10,2) {};
\node[shape=circle,draw=black,fill=black] (7l) at (12,2) {};

\begin{scope}
    \path [-] (1) edge node {} (2);
    \path [-] (1) edge node {} (3);
    \path [-] (2) edge node {} (3);
    \path [-] (3) edge node {} (4);
    \path [-] (4) edge node {} (5);
    \path [-] (5) edge node {} (7,0);
    \path [-] (9,0) edge node {} (6);

    \path [-] (4) edge node {} (4l);
    \path [-] (5) edge node {} (5l);
    \path [-] (6) edge node {} (6l);
\end{scope}

\begin{scope}
    \path (7,0) -- node[auto=false]{\Large \ldots} (9,0);
\end{scope}
\end{tikzpicture}}}
\caption{}%
\label{fig:BaseCase2}%
\end{figure}

\noindent Note that $G'(3,k,1) - u_k \cong $ $G'(3,k+1,0) - c_1 $ and $G'(3,k,1) - N[u_k] \cong $ $G'(3,k+1,0) - N[c_1]$. Therefore $G'(3,k,1) - u_k \sim $ $G'(3,k+1,0) - c_1 $ and $G'(3,k,1) - N[u_k] \sim $ $G'(3,k+1,0) - N[c_1]$. Finally by Proposition \ref{prop:deletion}, $G'(3,k,1)\sim G'(3,k+1,0)$ and hence $G(3,k,1)\sim G(3,k+1,0)$.

Now suppose $G(3,k,\ell)\sim G(3,k+\ell,0)$ for all $0 < \ell \leq m$ for some $m\ge 1$. Let $v_{m+1}$ and $u_{k+m+1}$ be the vertices illustrated in Figure~\ref{fig:InductiveStep} in $G(3,k,m+1)$ and $G(3,k+m+1,0)$, respectively.

\setcounter{subfigure}{0}
\begin{figure}[!h]
\def\c{0.45}
\def\r{1}
\centering
\subfigure[$G(3,k,m+1)$]{
\scalebox{\c}{
\begin{tikzpicture}
\node[shape=circle,draw=black,fill=black,label=below:\Large $c_{1}$] (1) at (2,0) {};
\node[shape=circle,draw=black,fill=black,label=left:\Large $c_{2}$] (2) at (3,2) {};
\node[shape=circle,draw=black,fill=black,label=below:\Large $c_{3}$] (3) at (4,0) {};

\node[shape=circle,draw=black,fill=black,label=below:\Large $u_{1}$] (4) at (6,0) {};
\node[shape=circle,draw=black,fill=black] (4l) at (6,2) {};
\node[shape=circle,draw=black,fill=black,label=below:\Large $u_{k}$] (5) at (9,0) {};
\node[shape=circle,draw=black,fill=black] (5l) at (9,2) {};

\node[shape=circle,draw=black,fill=black,label=below:\Large $v_{1}$] (6) at (0,0) {};
\node[shape=circle,draw=black,fill=black] (6l) at (0,2) {};
\node[shape=circle,draw=black,fill=black,label=below:\Large $v_{m+1}$] (7) at (-3,0) {};
\node[shape=circle,draw=black,fill=black] (7l) at (-3,2) {};

\begin{scope}
    \path [-] (1) edge node {} (2);
    \path [-] (1) edge node {} (3);
    \path [-] (2) edge node {} (3);
    
    \path [-] (3) edge node {} (4);
    \path [-] (4) edge node {} (6.5,0);
    \path [-] (8.5,0) edge node {} (5);
    \path [-] (4) edge node {} (4l);
    \path [-] (5) edge node {} (5l);

    \path [-] (1) edge node {} (6);
    \path [-] (6) edge node {} (-0.5,0);
    \path [-] (-2.5,0) edge node {} (7);
    \path [-] (6) edge node {} (6l);
    \path [-] (7) edge node {} (7l);
\end{scope}

\begin{scope}
    \path (6.5,0) -- node[auto=false]{\Large \ldots} (8.5,0);
    \path (-0.5,0) -- node[auto=false]{\Large \ldots} (-2.5,0);
\end{scope}
\end{tikzpicture}}}
\qquad
\subfigure[$G(3,k+m+1,0)$]{
\scalebox{\c}{
\begin{tikzpicture}
\node[shape=circle,draw=black,fill=black,label=below:\Large $c_{1}$] (1) at (0,0) {};
\node[shape=circle,draw=black,fill=black,label=left:\Large $c_{2}$] (2) at (1,2) {};
\node[shape=circle,draw=black,fill=black,label=below:\Large $c_{3}$] (3) at (2,0) {};

\node[shape=circle,draw=black,fill=black,label=below:\Large $u_{1}$] (4) at (4,0) {};
\node[shape=circle,draw=black,fill=black] (4l) at (4,2) {};
\node[shape=circle,draw=black,fill=black,label=below:\Large $u_{2}$] (5) at (6,0) {};
\node[shape=circle,draw=black,fill=black] (5l) at (6,2) {};
\node[shape=circle,draw=black,fill=black,label=below:\Large $u_{k+m+1}$] (6) at (10,0) {};
\node[shape=circle,draw=black,fill=black] (6l) at (10,2) {};

\begin{scope}
    \path [-] (1) edge node {} (2);
    \path [-] (1) edge node {} (3);
    \path [-] (2) edge node {} (3);
    \path [-] (3) edge node {} (4);
    \path [-] (4) edge node {} (5);
    \path [-] (5) edge node {} (7,0);
    \path [-] (9,0) edge node {} (6);

    \path [-] (4) edge node {} (4l);
    \path [-] (5) edge node {} (5l);
    \path [-] (6) edge node {} (6l);
\end{scope}

\begin{scope}
    \path (7,0) -- node[auto=false]{\Large \ldots} (9,0);
\end{scope}
\end{tikzpicture}}}
\caption{}%
\label{fig:InductiveStep}%
\end{figure}

\noindent Note that $G(3,k,m+1) - v_{m+1} \cong G(3,k,m) \cup K_1$ and $G(3,k+m+1,0) - u_{k+m+1} \cong G(3,k+m,0) \cup K_1$. By our inductive hypothesis and Proposition \ref{prop:deletion} $G(3,k,m+1) - v_{m+1} \sim G(3,k+m+1,0) - u_{k+m+1}$. Furthermore $G(3,k,m+1) - N[v_{m+1}] \cong G(3,k,m-1) \cup K_1$ and $G(3,k+m+1,0) - N[u_{k+m+1}] \cong G(3,k+m-1,0) \cup K_1$. Again by our inductive hypothesis and Proposition \ref{prop:deletion}, $G(3,k,m+1) - N[v_{m+1}] \sim G(3,k+m+1,0) - N[u_{k+m+1}]$. Therefore by Proposition \ref{prop:graphstarsequal}, $G(3,k,m+1) \sim G(3,k+m+1,0)$ and by induction $G(3,k+\ell,0) \sim G(3,k,\ell)$ for all $\ell > 0$. Furthermore $G(3,k+\ell,0) \sim G(3,k,\ell)$ for all $\ell,k > 0$.

It now suffices to show $G(5,k,0) \sim G(3,k+1,0)$ and $G(5,k,\ell) \sim G(3,k+1,\ell)$ for all $\ell,k > 0$. We will begin with showing $G(5,k,0) \sim G(3,k+1,0)$. Let $c_{3}$ and $u_1$ be the vertices illustrated in Figure~\ref{fig:5Case0} in $G(5,k,0)$ and $G(3,k+1,0)$, respectively.

\setcounter{subfigure}{0}
\begin{figure}[!h]
\def\c{0.45}
\def\r{1}
\centering
\subfigure[$G(5,k,0)$]{
\scalebox{\c}{
\begin{tikzpicture}
\node[shape=circle,draw=black,fill=black,label=below:\Large $c_{1}$] (c1) at (-1,0) {};
\node[shape=circle,draw=black,fill=black,label=below:\Large $c_{2}$] (c2) at (0.5,-0.5) {};
\node[shape=circle,draw=black,fill=black,label=below:\Large $c_{3}$] (c3) at (2,0) {};
\node[shape=circle,draw=black,fill=black,label=right:\Large $c_{4}$] (c4) at (1.5,2) {};
\node[shape=circle,draw=black,fill=black,label=left:\Large $c_{5}$] (c5) at (-0.5,2) {};

\node[shape=circle,draw=black,fill=black,label=below:\Large $u_{1}$] (4) at (4,0) {};
\node[shape=circle,draw=black,fill=black] (4l) at (4,2) {};
\node[shape=circle,draw=black,fill=black,label=below:\Large $u_{2}$] (5) at (6,0) {};
\node[shape=circle,draw=black,fill=black] (5l) at (6,2) {};
\node[shape=circle,draw=black,fill=black,label=below:\Large $u_{k}$] (6) at (10,0) {};
\node[shape=circle,draw=black,fill=black] (6l) at (10,2) {};

\begin{scope}
    \path [-] (c1) edge node {} (c2);
    \path [-] (c2) edge node {} (c3);
    \path [-] (c3) edge node {} (c4);
    \path [-] (c4) edge node {} (c5);
    \path [-] (c5) edge node {} (c1);
    \path [-] (c3) edge node {} (4);
    \path [-] (4) edge node {} (5);
    \path [-] (5) edge node {} (7,0);
    \path [-] (9,0) edge node {} (6);

    \path [-] (4) edge node {} (4l);
    \path [-] (5) edge node {} (5l);
    \path [-] (6) edge node {} (6l);
\end{scope}

\begin{scope}
    \path (7,0) -- node[auto=false]{\Large \ldots} (9,0);
\end{scope}
\end{tikzpicture}}}
\qquad
\subfigure[$G(3,k+1,0)$]{
\scalebox{\c}{
\begin{tikzpicture}
\node[shape=circle,draw=black,fill=black,label=below:\Large $c_{1}$] (1) at (0,0) {};
\node[shape=circle,draw=black,fill=black,label=left:\Large $c_{2}$] (2) at (1,2) {};
\node[shape=circle,draw=black,fill=black,label=below:\Large $c_{3}$] (3) at (2,0) {};

\node[shape=circle,draw=black,fill=black,label=below:\Large $u_{1}$] (4) at (4,0) {};
\node[shape=circle,draw=black,fill=black] (4l) at (4,2) {};
\node[shape=circle,draw=black,fill=black,label=below:\Large $u_{2}$] (5) at (6,0) {};
\node[shape=circle,draw=black,fill=black] (5l) at (6,2) {};
\node[shape=circle,draw=black,fill=black,label=below:\Large $u_{k+1}$] (6) at (10,0) {};
\node[shape=circle,draw=black,fill=black] (6l) at (10,2) {};

\begin{scope}
    \path [-] (1) edge node {} (2);
    \path [-] (1) edge node {} (3);
    \path [-] (2) edge node {} (3);
    \path [-] (3) edge node {} (4);
    \path [-] (4) edge node {} (5);
    \path [-] (5) edge node {} (7,0);
    \path [-] (9,0) edge node {} (6);

    \path [-] (4) edge node {} (4l);
    \path [-] (5) edge node {} (5l);
    \path [-] (6) edge node {} (6l);
\end{scope}

\begin{scope}
    \path (7,0) -- node[auto=false]{\Large \ldots} (9,0);
\end{scope}
\end{tikzpicture}}}
\caption{}%
\label{fig:5Case0}%
\end{figure}

Note that $G(5,k,0) - c_3 \cong P_k^* \cup P_4$ and $G(3,k+1,0) - u_1 \cong P_k^* \cup C_3 \cup K_1$. Furthermore $I(P_4,x)=I(C_3 \cup K_1,x)=1+4x+3x^2$. Thus by Proposition \ref{prop:deletion} $G(5,k,0) - c_3 \sim G(3,k+1,0) - u_1$. As $G(5,k,0) - N[c_3]$ and $G(3,k+1,0) - N[u_1]$ are both isomorphic to $P_{k-1}^* \cup K_2 \cup K-1$ then $G(5,k,0) - N[c_3] \sim G(3,k+1,0) - N[u_1]$. Therefore by Proposition \ref{prop:deletion}, $G(5,k,0) \sim G(3,k+1,0)$.

Finally we will show $G(5,k,\ell) \sim G(3,k+1,\ell)$. Consider the vertex $v_1$ illustrated in Figure~\ref{fig:5Casek} for both $G(5,k,\ell)$ and $G(3,k+1,\ell)$.

\setcounter{subfigure}{0}
\begin{figure}[!h]
\def\c{0.45}
\def\r{1}
\centering
\subfigure[$G(5,k,\ell)$]{
\scalebox{\c}{
\begin{tikzpicture}

\node[shape=circle,draw=black,fill=black,label=below:\Large $c_{1}$] (c1) at (1.5,0) {};
\node[shape=circle,draw=black,fill=black,label=below:\Large $c_{2}$] (c2) at (3,-0.5) {};
\node[shape=circle,draw=black,fill=black,label=below:\Large $c_{3}$] (c3) at (4.5,0) {};
\node[shape=circle,draw=black,fill=black,label=right:\Large $c_{4}$] (c4) at (4,2) {};
\node[shape=circle,draw=black,fill=black,label=left:\Large $c_{5}$] (c5) at (2,2) {};

\node[shape=circle,draw=black,fill=black,label=below:\Large $u_{1}$] (4) at (6,0) {};
\node[shape=circle,draw=black,fill=black] (4l) at (6,2) {};
\node[shape=circle,draw=black,fill=black,label=below:\Large $u_{k}$] (5) at (9,0) {};
\node[shape=circle,draw=black,fill=black] (5l) at (9,2) {};

\node[shape=circle,draw=black,fill=black,label=below:\Large $v_{1}$] (6) at (0,0) {};
\node[shape=circle,draw=black,fill=black] (6l) at (0,2) {};
\node[shape=circle,draw=black,fill=black,label=below:\Large $v_{\ell}$] (7) at (-3,0) {};
\node[shape=circle,draw=black,fill=black] (7l) at (-3,2) {};

\begin{scope}

    \path [-] (c1) edge node {} (c2);
    \path [-] (c2) edge node {} (c3);
    \path [-] (c3) edge node {} (c4);
    \path [-] (c4) edge node {} (c5);
    \path [-] (c5) edge node {} (c1);
    
    \path [-] (c3) edge node {} (4);
    \path [-] (4) edge node {} (6.5,0);
    \path [-] (8.5,0) edge node {} (5);
    \path [-] (4) edge node {} (4l);
    \path [-] (5) edge node {} (5l);

    \path [-] (c1) edge node {} (6);
    \path [-] (6) edge node {} (-0.5,0);
    \path [-] (-2.5,0) edge node {} (7);
    \path [-] (6) edge node {} (6l);
    \path [-] (7) edge node {} (7l);
\end{scope}

\begin{scope}
    \path (6.5,0) -- node[auto=false]{\Large \ldots} (8.5,0);
    \path (-0.5,0) -- node[auto=false]{\Large \ldots} (-2.5,0);
\end{scope}
\end{tikzpicture}}}
\qquad
\subfigure[$G(3,k+1,\ell)$]{
\scalebox{\c}{
\begin{tikzpicture}
\node[shape=circle,draw=black,fill=black,label=below:\Large $c_{1}$] (1) at (1,0) {};
\node[shape=circle,draw=black,fill=black,label=left:\Large $c_{2}$] (2) at (2,2) {};
\node[shape=circle,draw=black,fill=black,label=below:\Large $c_{3}$] (3) at (3,0) {};

\node[shape=circle,draw=black,fill=black,label=below:\Large $u_{1}$] (4) at (4,0) {};
\node[shape=circle,draw=black,fill=black] (4l) at (4,2) {};
\node[shape=circle,draw=black,fill=black,label=below:\Large $u_{k}$] (5) at (7,0) {};
\node[shape=circle,draw=black,fill=black] (5l) at (7,2) {};
\node[shape=circle,draw=black,fill=black,label=below:\Large $u_{k+1}$] (8) at (9,0) {};
\node[shape=circle,draw=black,fill=black] (8l) at (9,2) {};

\node[shape=circle,draw=black,fill=black,label=below:\Large $v_{1}$] (6) at (0,0) {};
\node[shape=circle,draw=black,fill=black] (6l) at (0,2) {};
\node[shape=circle,draw=black,fill=black,label=below:\Large $v_{\ell}$] (7) at (-3,0) {};
\node[shape=circle,draw=black,fill=black] (7l) at (-3,2) {};

\begin{scope}
    \path [-] (1) edge node {} (2);
    \path [-] (1) edge node {} (3);
    \path [-] (2) edge node {} (3);
    
    \path [-] (3) edge node {} (4);
    \path [-] (4) edge node {} (4.5,0);
    \path [-] (6.5,0) edge node {} (5);
    \path [-] (4) edge node {} (4l);
    \path [-] (5) edge node {} (5l);
    \path [-] (5) edge node {} (8);
    \path [-] (8) edge node {} (8l);

    \path [-] (1) edge node {} (6);
    \path [-] (6) edge node {} (-0.5,0);
    \path [-] (-2.5,0) edge node {} (7);
    \path [-] (6) edge node {} (6l);
    \path [-] (7) edge node {} (7l);
\end{scope}

\begin{scope}
    \path (4.5,0) -- node[auto=false]{\Large \ldots} (6.5,0);
    \path (-0.5,0) -- node[auto=false]{\Large \ldots} (-2.5,0);
\end{scope}
\end{tikzpicture}}}
\caption{}%
\label{fig:5Casek}%
\end{figure}

Note that $G(5,k,\ell) - v_1 \cong G(5,k,0) \cup P_{\ell-1}^* \cup K_1$ and $G(3,k+1,\ell) - v_1 \cong G(3,k+1,0) \cup P_{\ell-1}^* \cup K_1$. As $G(5,k,0) \sim G(3,k+1,0)$ then $G(5,k,\ell) - v_1 \sim G(3,k+1,\ell) - v_1$. Furthermore $G(5,k,\ell) - N[v_1]$ and $G(3,k+1,\ell) - N[v_1]$ and both isomorphic to $P_{\ell-2}^* \cup P_{k+1}^* \cup K_1$. Therefore $G(5,k,\ell) - N[v_1] \sim G(3,k+1,\ell) - N[v_1]$ and by Proposition \ref{prop:deletion} $G(5,k,\ell) \sim G(3,k+1,\ell)$.
\end{proof}

\begin{theorem}
\label{thm:unioddmin}
Let $n\ge 3$ be odd, $G$ be a connected well-covered unicyclic graph of order $n$ and $H_n$ a graph of order $n$ in $\mathcal{S}_P$. Then 

$$ G\succeq \left\{
\begin{array}{ll}
      C_n &~~~~\textnormal{if } n\le 7 \\      
      & \\
      H_n &~~~~\textnormal{if } n\ge 9 \\           
\end{array}\right.$$
\end{theorem}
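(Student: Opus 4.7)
The plan is to start from the classification of connected well-covered unicyclic graphs (Theorem~\ref{thm:wcunicyclicgraphs}) and observe that, because $n$ is odd, every graph in $\mathcal{S}_4\cup\mathcal{KU}$ (which have even order) is excluded, as is $C_4$. So it suffices to prove the claimed bound for $G\in\{C_3,C_5,C_7\}\cup\mathcal{S}_3\cup\mathcal{S}_5$.

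For $G\in\mathcal{S}_3\cup\mathcal{S}_5$, I would apply Lemma~\ref{lem:replacebranchwithpath} to each well-covered branch of $G$, rearranging every branch $F^\ast$ into a single pendant path $P_k^\ast$. The result is a graph $H\in\mathcal{S}_P$ of order $n$ with $H\preceq G$. Lemma~\ref{lem:equivofoddwcuniminimal} then gives $H\sim H_n$, so by transitivity (Theorem~\ref{thm:partialorder}), $H_n\preceq G$. This immediately settles the case $n\ge 9$, because then $G$ cannot be one of $C_3,C_5,C_7$.

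For the remaining cycle cases $n\in\{3,5,7\}$: if $G=C_n$ the conclusion is trivial, and for $n=5$ the only other option is $G(3,1,0)=D_5$, which is independence equivalent to $C_5$ by Theorem~\ref{thm:cycleequivclass}. Only $n=7$ with $G\in\mathcal{S}_3\cup\mathcal{S}_5$ requires more work: the general reduction above gives $H_7\preceq G$, so it suffices to verify $C_7\preceq H_7$. Taking $H_7=G(3,2,0)$ and using either Theorem~\ref{thm:cliquedeletionformula} applied to the triangle or repeated application of Proposition~\ref{prop:deletion}, one computes $I(H_7,x)=1+7x+14x^2+8x^3$ and $I(C_7,x)=1+7x+14x^2+7x^3$. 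Their difference is $-x^3$, which is nonnegative on $[\xi(H_7),0]$, so $C_7\preceq H_7\preceq G$.

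The main obstacle is the threshold behavior at $n=7$: one needs to notice that $C_7$ still belongs to the family and in fact beats every graph in $\mathcal{S}_P$ of order $7$ by exactly the single monomial $x^3$, which is why $C_n$ is the minimum for $n=7$ but $H_n$ takes over starting at $n=9$ (the first odd value for which $C_n$ is no longer well-covered). Apart from that polynomial comparison, the argument is a clean application of the classification together with Lemmas~\ref{lem:replacebranchwithpath} and~\ref{lem:equivofoddwcuniminimal}.
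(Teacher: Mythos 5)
Your argument for $n\ge 9$ is exactly the paper's: reduce each well-covered branch to a pendant $P_k^{\ast}$ via Lemma~\ref{lem:replacebranchwithpath}, land in $\mathcal{S}_P$, and invoke Lemma~\ref{lem:equivofoddwcuniminimal} plus transitivity. Where you diverge is the case $n\le 7$. The paper dispatches it in one line: for odd $n\le 7$ the cycle $C_n$ is itself well-covered, and Theorem~\ref{thm:maxminunicyclic} already shows $C_n\preceq G$ for \emph{every} connected unicyclic graph $G$ of order $n$, well-covered or not, so no reduction to $\mathcal{S}_P$ and no polynomial comparison is needed. Your route -- reducing $G$ to $H_7\in\mathcal{S}_P$ and then checking $I(C_7,x)-I(H_7,x)=-x^3\ge 0$ on $[\xi(H_7),0]$ -- is correct (the computation $I(G(3,2,0),x)=1+7x+14x^2+8x^3$ versus $I(C_7,x)=1+7x+14x^2+7x^3$ checks out, as do the small cases $n=3,5$ with $G(3,1,0)=D_5\sim C_5$), and it has the minor virtue of quantifying exactly how $C_7$ beats the $\mathcal{S}_P$ graphs and of explaining the threshold at $n=9$. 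But it is strictly more work than necessary, and it obscures the cleaner structural reason: $C_n$ is the global minimum among all connected unicyclic graphs, so whenever it happens to lie in the well-covered subfamily it is automatically the minimum there too.
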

\begin{proof}
Let $G$ be a connected well-covered unicyclic graph of order $n$ where $n$ is odd. If $n\le 7$, then from Theorem~\ref{thm:wcunicyclicgraphs}, $C_n$ is a well-covered unicyclic graph. So by Theorem~\ref{thm:maxminunicyclic}, $C_n\preceq G$. For $n\ge 9$, we know that $G\in \mathcal{S}_3\cup \mathcal{S}_5$. Let $H'$ be the graph obtained from $G$ by replacing each well-covered branch $S$ with a well-covered branch isomorphic to $P_{k-1}^{\ast}$ with exactly one vertex of degree $2$ (as in Lemma~\ref{lem:replacebranchwithpath}). By Lemma \ref{lem:replacebranchwithpath}, $H' \preceq G$. Furthermore clearly $H' \in \mathcal{S}_P$. Thus by Lemma~\ref{lem:equivofoddwcuniminimal}, $I(H',x)=I(H_n,x)$ and thus $H_n\preceq G$.

\end{proof}

\begin{corollary}
Let $G$ be a connected well-covered unicyclic graph of odd order $n$ and $H_n\in \mathcal{S}_P$ of order $n$. Then

\begin{itemize}
\item[i)] $\xi(C_n)\le \xi(G)\le\xi(M_n)$ if $n\le 7$, and
\item[ii)] $\xi(H_n)\le \xi(G)\le\xi(M_n)$ if $n\ge 9$.
\end{itemize} 
\end{corollary}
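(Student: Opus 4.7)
The plan is to split on the size of $n$ and reduce to results already established. For the small cases $n\in\{3,5,7\}$, Theorem~\ref{thm:wcunicyclicgraphs} tells us that $C_n$ is itself a connected well-covered unicyclic graph, so the lower bound $C_n\preceq G$ is an immediate consequence of Theorem~\ref{thm:maxminunicyclic} (which applies to arbitrary connected unicyclic graphs, not just well-covered ones). This case should be a single sentence.

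For $n\ge 9$ the plan is to first narrow down where $G$ lives. By Theorem~\ref{thm:wcunicyclicgraphs}, $G\in\{C_3,C_4,C_5,C_7\}\cup\mathcal{S}_3\cup\mathcal{S}_4\cup\mathcal{S}_5\cup\mathcal{KU}$. The four exceptional cycles are too small, and every element of $\mathcal{S}_4$ or $\mathcal{KU}$ has even order by definition, so $G\in\mathcal{S}_3\cup\mathcal{S}_5$. In particular the cycle of $G$ carries one or two well-covered branches hanging off it.

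Next I would invoke Lemma~\ref{lem:replacebranchwithpath} on each of the (at most two) well-covered branches of $G$. The lemma lets me rearrange the edges inside each branch $S$ so that it becomes a path-star branch isomorphic to $P_{k-1}^{\ast}$ with a unique degree-$2$ attachment vertex, and the resulting graph $H'$ satisfies $H'\preceq G$. By construction $H'$ has the same cycle length as $G$ (either $3$ or $5$) and only path-star branches, so $H'\in\mathcal{S}_P$ of order $n$.

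Finally, since both $H'$ and $H_n$ are elements of $\mathcal{S}_P$ of the same order, Lemma~\ref{lem:equivofoddwcuniminimal} gives $I(H',x)=I(H_n,x)$, and hence $H_n\sim H'\preceq G$, which is $H_n\preceq G$ as required. The main subtlety I expect is the bookkeeping in verifying that the output of Lemma~\ref{lem:replacebranchwithpath} really is an element of $\mathcal{S}_P$: one needs that at least one of the two path-star branches is nontrivial, which is forced because $n\ge 9$ prevents the graph from being just $C_3$ or $C_5$, and that the attachment vertex of the path-star to the cycle has degree $3$ in $H'$, which is exactly the condition encoded in the definition of $\mathcal{S}_P$ and is built into the statement of Lemma~\ref{lem:replacebranchwithpath}.
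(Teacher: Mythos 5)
Your argument for the lower bounds is sound and in fact reproduces, essentially verbatim, the paper's proof of Theorem~\ref{thm:unioddmin}: for odd $n\le 7$ the cycle $C_n$ is itself well-covered so Theorem~\ref{thm:maxminunicyclic} applies directly, and for $n\ge 9$ the parity argument forces $G\in\mathcal{S}_3\cup\mathcal{S}_5$, after which Lemma~\ref{lem:replacebranchwithpath} and Lemma~\ref{lem:equivofoddwcuniminimal} give $H_n\preceq G$. That part is correct.

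However, there is a genuine gap: the corollary is a two-sided bound on $\xi$, and you never address the upper bound $\xi(G)\le\xi(M_n)$, nor do you mention $M_n$ at all. That half comes from Theorem~\ref{thm:wcunicyclicoddupperbound}, which establishes $G\preceq M_n$ for every connected well-covered unicyclic graph of odd order; without citing it the claimed inequality $\xi(G)\le\xi(M_n)$ has no justification. You also leave implicit the passage from the relation $\preceq$ to the inequality on $\xi$: the corollary follows only because $H\preceq G$ implies $\xi(H)\le\xi(G)$ (since $I(G,x)>0$ on $(\xi(G),0]$, as noted in Section~\ref{sec:intro}). The intended proof of this corollary is simply to combine Theorem~\ref{thm:unioddmin} and Theorem~\ref{thm:wcunicyclicoddupperbound} with that implication; re-deriving the lower-bound theorem is harmless but unnecessary, while omitting the upper bound leaves the statement unproved.
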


\section{Resolutions to open questions and conjectures related to $\preceq$}\label{sec:counterexamples}
We have seen in the previous sections that when $G$ is a (well-covered) tree or a (well-covered) unicyclic graph there seems to be a correspondence between graphs that maximize/minimize the maximum degree and the graphs that are maximal/minimal with respect to $\preceq$.  This lead Oboudi to ask two questions about the degree sequence of trees and their independence polynomials. Let $D_G$ denote the degree sequence of $G$ in nondecreasing order. $D_H=(y_1,y_2,\ldots, y_n) \preceq D_G=(x_1,x_2,\ldots ,x_n)$ if $x_t>y_t$ for some $t\in \{1,2,\ldots , n\}$ and $x_k=y_k$ for all $k<t$.

\begin{ques}[\cite{Oboudi2018treeroots}]\label{ques:oboudi1}
Let $T_1$ and $T_2$ be two trees such that $D_{T_1}\prec D_{T_2}$. Is it true that $T_1\prec T_2$?
\end{ques}

\begin{ques}[\cite{Oboudi2018treeroots}]\label{ques:oboudi2}
Let $T_1$ and $T_2$ be two trees such that $I(T_1,x)=I(T_2,x)$. Is it true that $D_{T_1}=D_{T_2}$?
\end{ques}

These questions are closely related to the $15$-year old conjecture due to Levit and Mandrescu. 

\begin{conj}[\cite{INDPOLY,Levit2008}]
\label{conj:wctree}
If $G$ is a connected graph and $T$ is a well-covered tree, and $I(T,x)=I(G,x)$, then $G$ is a well-covered tree.
\end{conj}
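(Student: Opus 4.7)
The plan is to disprove the conjecture by constructing an explicit counterexample. The first step is a useful reduction. For any graph $H$, the coefficients $i_1(H) = |V(H)|$ and $i_2(H) = \binom{|V(H)|}{2} - |E(H)|$ are read off from the independence polynomial, so any connected graph $G$ with $I(G,x) = I(T,x)$ for a tree $T$ on $n$ vertices must itself have $n$ vertices and $n-1$ edges, and hence be a tree. By Corollary~\ref{cor:wctreesaregraphstars}, a tree is well-covered if and only if its pendant edges form a perfect matching. The conjecture therefore reduces to: every tree sharing its independence polynomial with a well-covered tree already has pendant edges forming a perfect matching.

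To falsify this reduced statement, I would search for a pair of trees $(T,T')$ of the same order such that the pendant edges of $T$ form a perfect matching, those of $T'$ do not, and $I(T,x) = I(T',x)$. The well-covered candidates have the form $T = T_0^{\ast}$, with polynomials given in closed form by Theorem~\ref{thm:stargraphformula}. The non-well-covered candidates are all the remaining trees of the same order; their polynomials are computed via the leaf-deletion recursion in Proposition~\ref{prop:deletion}. I would then scan the two lists for matches; since no coincidences appear at the smallest orders (a by-hand check eliminates 4 and 6 vertices immediately), this search almost certainly requires computer assistance.

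Having found a base pair $(T,T')$, I would promote it to an infinite family. The cleanest route is to attach a fixed rooted subgraph at corresponding vertices of $T$ and $T'$: the resulting trees remain independence-equivalent via repeated application of Proposition~\ref{prop:deletion}, and if the attachment point and attached structure are chosen carefully, the larger version of $T$ is still a graph-star (hence well-covered) while the larger version of $T'$ still fails to have pendant edges forming a perfect matching. Alternatively, one may exhibit a parametrized family outright --- for instance, pairs of caterpillars or spiders indexed by a path length --- whose polynomials can be compared inductively using the same recursion.

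The chief obstacle is the base case. Independence polynomials of trees are surprisingly rigid: the introduction recalls that $P_n$ is independence-unique among connected graphs, and well-covered trees are highly constrained structurally, so there is no a priori reason to expect even a single counterexample to exist. Locating the first pair is therefore genuinely delicate and may demand either a patient computer search or a slick algebraic identity forcing two tree polynomials to coincide by design. As an added benefit, a base pair $(T,T')$ with different degree sequences will simultaneously yield a negative answer to Question~\ref{ques:oboudi2}, matching the paper's remark that its counterexamples also resolve two open questions in \cite{Oboudi2018treeroots}.
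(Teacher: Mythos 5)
Your strategy coincides with the paper's: the conjecture is false, the reduction to trees via $i_1$ and $i_2$ is correct, the smallest counterexample is located by computer search, and an infinite family is then built by a local attachment whose independence equivalence is verified with Proposition~\ref{prop:deletion}. The difficulty is that your proposal stops precisely where the mathematical content begins. A disproof \emph{is} an exhibited counterexample, and you never produce one; asserting that a suitable pair $(T,T')$ ``almost certainly'' exists and could be found by computer establishes nothing, since a priori the search could come up empty at every order (you yourself note that tree independence polynomials are rigid, e.g.\ $P_n$ is independence unique among connected graphs). For the record, the smallest counterexample has $10$ vertices: $P_5^{\ast}$ is independence equivalent to the tree $G_{10}$ of Figure~\ref{fig:T1}, obtained from the path $v_1v_2v_3v_4v_5$ by attaching a leaf $z_1$ and a pendant path $y_1y_2$ to $v_3$, and a pendant path $x_1x_2$ to $v_5$; both graphs have independence polynomial $13x^5+45x^4+59x^3+36x^2+10x+1$, and $G_{10}$ is not well-covered because $v_4$ is adjacent to no leaf (Corollary~\ref{cor:wctreesaregraphstars}). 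Your by-hand elimination of orders $4$ and $6$ does not come close to this.

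Your plan for promoting the base pair to an infinite family is also too loosely stated to be checkable: grafting ``a fixed rooted subgraph at corresponding vertices'' of two independence-equivalent trees does not in general preserve independence equivalence, because Proposition~\ref{prop:deletion} requires that \emph{both} the vertex-deleted and the closed-neighbourhood-deleted graphs continue to match. The paper's construction is engineered so that this works: $G_{2(n+1)}$ is obtained from $G_{2n}$ by appending a new pendant $K_2$ at the end of the growing path, and Theorem~\ref{thm:indequiv2} is proved by showing that $I(G_{2n},x)$ satisfies the same recurrence $I(P_n^{\ast},x)=(1+x)I(P_{n-1}^{\ast},x)+x(1+x)I(P_{n-2}^{\ast},x)$ as the target, with two base cases checked directly. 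A complete write-up must specify the attachment at this level of concreteness and verify both deletion identities, and must also confirm that every member of the family remains non-well-covered (here, that $v_4$ never acquires a leaf neighbour).
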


In particular, if Question~\ref{ques:oboudi2} has a positive answer, then any connected graph independence equivalent to a well-covered tree of order $2n$ must have exactly $n$ leaves and therefore would provide some evidence for Conjecture~\ref{conj:wctree}. However, as we will show, Conjecture~\ref{conj:wctree} is false and the answer to both Questions~\ref{ques:oboudi1} and \ref{ques:oboudi2} is ``no''. We now provide an infinite family of graphs that will justify all three claims in the previous sentence. 


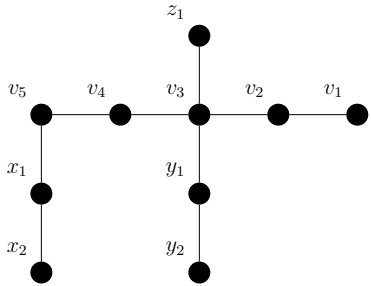
\begin{figure}[htp]
\def\c{0.7}
\def\r{1.5}
\centering
\scalebox{\c}{
\begin{tikzpicture}
\begin{scope}[every node/.style={circle,thick,draw,fill}]
    \node[label=above left:$v_5$] (11) at (1*\r,0*\r) {};
    \node[label=above left:$x_1$] (2) at (1*\r,-1*\r) {};
    \node[label=above left:$x_2$] (8) at (1*\r,-2*\r) {};
    \node[label=above left:$v_4$] (5) at (2*\r,0*\r) {};
    \node[label=above left:$v_3$] (12) at (3*\r,0*\r) {};
    \node[label=above left:$z_1$] (6) at (3*\r,1*\r) {};
    \node[label=above left:$y_1$] (3) at (3*\r,-1*\r) {};
    \node[label=above left:$y_2$] (9) at (3*\r,-2*\r) {}; 
    \node[label=above left:$v_2$] (4) at (4*\r,0*\r) {}; 
    \node[label=above left:$v_1$] (10) at (5*\r,0*\r) {};    
\end{scope}

\begin{scope}
    \path [-] (11) edge node {} (2);
    \path [-] (2) edge node {} (8);
    
    \path [-] (11) edge node {} (5);
    \path [-] (5) edge node {} (12);
    \path [-] (12) edge node {} (6);
    \path [-] (12) edge node {} (4);
    \path [-] (4) edge node {} (10);
    \path [-] (12) edge node {} (3);
    \path [-] (3) edge node {} (9);
    
\end{scope}
\end{tikzpicture}}
\caption{The tree $G_{10}$ that is not well-covered but is equivalent to $P_{5}^{\ast}$}%
\label{fig:T1}%
\end{figure}

The graph in Figure~\ref{fig:T1} has independence polynomial $13x^5+45x^4+59x^3+36x^2+10x+1$ which is equivalent to the independence polynomial of $P_{5}^{\ast}$. From computations with the computer algebra system Maple, this is the smallest counterexample to Conjecture~\ref{conj:wctree}. It is a counterexample to Question \ref{ques:oboudi2} since $\Delta(P_{5}^{\ast})=3$ but $\Delta(G_{10})=4$ so $D_{P_{5}^{\ast}}\neq D_{G_{10}}$, but the graphs are independence equivalent. Furthermore is a counterexample to Question \ref{ques:oboudi1} since $G_{10}$ has one fewer leaf than $P_{5}^{\ast}$ so $D_{P_{5}^{\ast}} \prec D_{G_{10}}$, but again the graphs are independence equivalent so $G_{10} \preceq P_{5}^{\ast}$. This graph also provides the basis for an infinite family of counterexamples for each of Question \ref{ques:oboudi1}, Question \ref{ques:oboudi2}, and Conjecture~\ref{conj:wctree}.

We now define the graph $G_{2n}$ recursively as follows: Let $G_{10}$ be the graph pictured in Figure~\ref{fig:T1}. The graph $G_{2(n+1)}$ is obtained from $G_{2n}$ by adding a copy of $K_2$, with vertices labelled $v_{n+1}$ and $w_{n+1}$, and joining $v_{n}$ and $v_{n+1}$ with an edge, see Figure~\ref{fig:G2n}.

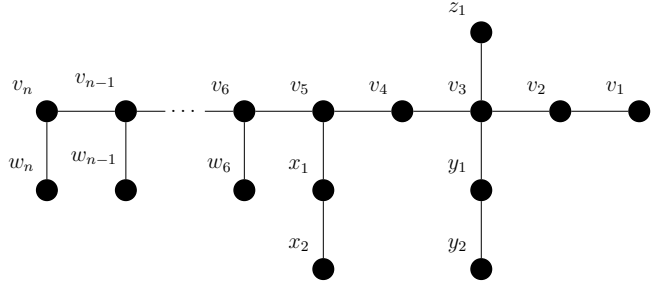
\begin{figure}[htp]
\def\c{0.7}
\def\r{1.5}
\centering
\scalebox{\c}{
\begin{tikzpicture}

\begin{scope}[every node/.style={circle,thick,draw,fill}]
     \node[label=above left:$w_n$] (13) at (-2.5*\r,-1*\r) {};
    \node[label=above left:$v_n$] (14) at (-2.5*\r,0*\r) {};
    
     \node[label=above left:$w_6$] (15) at (0*\r,-1*\r) {};
    \node[label=above left:$v_6$] (16) at (0*\r,0*\r) {};
    
    \node[label=above left:$w_{n-1}$] (7) at (-1.5*\r,-1*\r) {};
    \node[label=above left:$v_{n-1}$] (1) at (-1.5*\r,0*\r) {};
    \node[label=above left:$v_{5}$] (11) at (1*\r,0*\r) {};
    \node[label=above left:$x_1$] (2) at (1*\r,-1*\r) {};
    \node[label=above left:$x_2$] (8) at (1*\r,-2*\r) {};
    \node[label=above left:$v_4$] (5) at (2*\r,0*\r) {};
    \node[label=above left:$v_3$] (12) at (3*\r,0*\r) {};
    \node[label=above left:$z_1$] (6) at (3*\r,1*\r) {};
    \node[label=above left:$y_1$] (3) at (3*\r,-1*\r) {};
    \node[label=above left:$y_2$] (9) at (3*\r,-2*\r) {}; 
    \node[label=above left:$v_2$] (4) at (4*\r,0*\r) {}; 
    \node[label=above left:$v_1$] (10) at (5*\r,0*\r) {};    
\end{scope}

\begin{scope}
    
    \path [-] (7) edge node {} (1);
    \path [-] (11) edge node {} (2);
    \path [-] (2) edge node {} (8);
    \path [-] (14) edge node {} (1);
    \path [-] (14) edge node {} (13);
    \path [-] (11) edge node {} (5);
    \path [-] (5) edge node {} (12);
    \path [-] (12) edge node {} (6);
    \path [-] (15) edge node {} (16);
    \path [-] (11) edge node {} (16);
    \path [-] (12) edge node {} (4);
    \path [-] (4) edge node {} (10);
    \path [-] (12) edge node {} (3);
    \path [-] (3) edge node {} (9);
    
\end{scope}
	
	\path (1) -- node[auto=false]{\ldots} (16);
	\path [-] (16) edge node {} (-0.5*\r,0*\r) ;
    \path [-] (-1*\r,0*\r) edge node {} (1);

\end{tikzpicture}}
\caption{The graph $G_{2n}$ that is equivalent to $P_{n}^{\ast}$}%
\label{fig:G2n}%
\end{figure}

\begin{theorem}\label{thm:indequiv2}
For $n\ge 6$, $I(P_n^{\ast},x)=I(G_{2n},x)$.
\end{theorem}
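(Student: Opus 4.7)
The plan is to prove Theorem~\ref{thm:indequiv2} by strong induction on $n$, using Proposition~\ref{prop:deletion} to show that both $I(P_n^\ast,x)$ and $I(G_{2n},x)$ satisfy the same second-order linear recurrence, and then verifying two consecutive base cases.

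First I would apply Proposition~\ref{prop:deletion}(i) at the endpoint $u_n$ of $P_n^\ast$. Since $N[u_n]=\{u_{n-1},u_n,u_n^\ast\}$, deleting $u_n$ isolates $u_n^\ast$, so $P_n^\ast - u_n \cong P_{n-1}^\ast \cup K_1$; deleting $N[u_n]$ also isolates $u_{n-1}^\ast$, so $P_n^\ast - N[u_n] \cong P_{n-2}^\ast \cup K_1$. Together with Proposition~\ref{prop:deletion}(ii) this yields
\[ I(P_n^\ast,x) \;=\; (1+x)\,I(P_{n-1}^\ast,x) \;+\; x(1+x)\,I(P_{n-2}^\ast,x). \]
The analogous deletion at $v_n$ in $G_{2n}$, where $N[v_n]=\{v_{n-1},v_n,w_n\}$, gives $G_{2n}-v_n \cong G_{2(n-1)} \cup K_1$ (isolating $w_n$) and $G_{2n}-N[v_n] \cong G_{2(n-2)} \cup K_1$ (since removing $v_{n-1}$ further isolates $w_{n-1}$), producing the identical recurrence valid for all $n \geq 7$.

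It then suffices to verify the two base cases $n=5$ and $n=6$. The first is exactly the equality $I(G_{10},x) = I(P_5^\ast,x)$ recorded in the paragraph preceding the theorem. For $n=6$, applying the same deletions at $v_6$ in $G_{12}$ and at $u_6$ in $P_6^\ast$ reduces the identity $I(G_{12},x)=I(P_6^\ast,x)$ to the auxiliary identity
\[ I(G_{10} - v_5,x) \;=\; I(P_5^\ast - u_5,x) \;=\; (1+x)\,I(P_4^\ast,x). \]
Since $v_5$ has neighbours $v_4$ and $x_1$ in $G_{10}$, we have $G_{10}-v_5 \cong K_2 \cup T$, where $T$ is the small tree on $\{v_1,v_2,v_3,v_4,z_1,y_1,y_2\}$; one further application of Proposition~\ref{prop:deletion} at the degree-$4$ hub $v_3$ of $T$ reduces everything to a polynomial identity in $x$ that is verified by direct calculation.

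The main obstacle is the $n=6$ base case: the recurrence for $G_{2n}$ requires $G_{2(n-2)}$ to be defined (i.e.\ $n\geq 7$) and so cannot be invoked at $n=6$, forcing a hands-on computation at the small end. Once this base case is handled, the inductive step follows immediately because the two sequences satisfy the same second-order recurrence with matching initial data at $n=5$ and $n=6$.
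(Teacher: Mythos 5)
Your proof is correct and takes essentially the same route as the paper: both derive, via Proposition~\ref{prop:deletion}, that $I(P_n^{\ast},x)$ and $I(G_{2n},x)$ satisfy the common recurrence $f_n=(1+x)f_{n-1}+x(1+x)f_{n-2}$ and then match two consecutive initial values. The only (immaterial) difference is the choice of anchors: the paper verifies $n=6$ and $n=7$ by direct computation, while you anchor at $n=5$ and $n=6$, correctly noting that the $G_{2n}$ recurrence is only valid for $n\ge 7$ and handling $n=6$ by a further hand reduction (your auxiliary identity $I(G_{10}-v_5,x)=(1+x)I(P_4^{\ast},x)=(1+x)^3(1+2x)(1+4x)$ does check out).
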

\begin{proof}
The proof is by induction on $n$. For $n=6$ and $n=7$ we have equivalence from direct computations. Now suppose $I(P_k^{\ast},x)=I(G_{2k},x)$ for all $7\le k< n$. Label the vertices of $P_n^{\ast}$ such that the vertices of $P_n$ are labelled with $u_1,u_2,\ldots,u_n$ such that $u_{i}\sim u_{i+1}$ for $i=1,2,\ldots n-1$ and label the leaf adjacent to $u_{i}$ with $u_{i}^{\ast}$. Label the vertices of $G_{2n}$ as in its definition (see, Figure~\ref{fig:G2n}). Now, by Proposition~\ref{prop:deletion},
\begin{align*}
I(P_n^{\ast},x)&=I(P_n^{\ast}-u_1,x)+xI(P_n^{\ast}-N[u_1],x)\\
&=(1+x)I(P_{n-1}^{\ast},x)+x(1+x)I(P_{n-1}^{\ast},x)\\
&=(1+x)I(G_{2(n-1)},x)+x(1+x)I(G_{2(n-2)},x)\\ 
&=I(G_{2(n-1)}\cup K_1,x)+xI(G_{2(n-2)}\cup K_1,x)\\
&= I(G_{2n}-v_n,x)+xI(G_{2n}-N[v_n],x)\\
&=I(G_{2n},x).
\end{align*}

The result follows by induction.

\end{proof}

Since $v_4\in G_{2n}$ is not adjacent to a leaf, $G_{2n}$ is not well-covered by Corollary~\ref{cor:wctreesaregraphstars}. Thus, Theorem~\ref{thm:indequiv2} provides an infinite family of counterexamples to Conjecture \ref{conj:wctree}. This is also an infinite family answering Questions~\ref{ques:oboudi1} and \ref{ques:oboudi2} in the negative since $G_{2n}$ has one fewer pendant vertex as $P_{n}^{\ast}$ so $D_{P_{5}^{\ast}} \prec D_{G_{10}}$ ($D_{P_{5}^{\ast}} \neq D_{G_{10}}$ ) but the graphs are independence equivalent so $G_{2n} \preceq P_{n}^{\ast}$.


There is another conjecture related to $\preceq$ posed by Oboudi that we will answer now as well.

\begin{conj}[\cite{Oboudi2018treeroots}]\label{conj:oboudi3}
Let $T_1$ and $T_2$ be two trees of order $n$. Is it true that $T_1\prec T_2$ or $T_2\preceq T_1$?
\end{conj}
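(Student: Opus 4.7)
The plan is to disprove Conjecture~\ref{conj:oboudi3} by exhibiting an explicit pair of trees of the same order that are incomparable under $\preceq$. That is, I would produce trees $T_1$ and $T_2$ with $|V(T_1)| = |V(T_2)| = n$ such that neither $I(T_1,x) \geq I(T_2,x)$ on $[\xi(T_2),0]$ nor $I(T_2,x) \geq I(T_1,x)$ on $[\xi(T_1),0]$ holds. Since $P_n \preceq T \preceq S_n$ for every tree $T$ of order $n$ by Theorem~\ref{thm:treebounds}, any incomparable pair must lie strictly between these extremes, so a natural place to search is among ``intermediate'' trees such as caterpillars, brooms, spiders, or double stars of small order.

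First I would carry out a computer search: for small $n$ (likely $n \in \{7,8,9,10\}$, since the tree $G_{10}$ already appears as the smallest counterexample to Conjecture~\ref{conj:wctree}), enumerate all non-isomorphic trees, compute $I(T,x)$ using the deletion recursion of Proposition~\ref{prop:deletion}, and record $\xi(T)$. Then, for every pair $(T_1, T_2)$ of non-equivalent trees of the same order with (say) $\xi(T_1) \leq \xi(T_2)$, examine the sign behaviour of the polynomial $\Delta(x) := I(T_1,x) - I(T_2,x)$ on the interval $[\xi(T_1), 0]$. Incomparability is equivalent to $\Delta$ changing sign strictly inside $[\xi(T_2),0]$: a point where $\Delta(x) < 0$ in $[\xi(T_2),0]$ witnesses $T_1 \not\preceq T_2$, and the existence of such a point combined with $\Delta(0) = 0$ and $\Delta$ having a positive value somewhere in $[\xi(T_1),0]$ (which is automatic once we know $\Delta$ is not of one sign on the smaller interval, using that $\Delta$ is continuous and vanishes at $0$) gives $T_2 \not\preceq T_1$.

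Once a concrete candidate pair $(T_1, T_2)$ is identified, the written proof is straightforward: display both trees, compute their independence polynomials explicitly (via Proposition~\ref{prop:deletion} applied at a well-chosen vertex), and exhibit two rational numbers $a, b$ in the appropriate intervals such that $I(T_1,a) > I(T_2,a)$ and $I(T_1,b) < I(T_2,b)$. Locating $\xi(T_1)$ and $\xi(T_2)$ to sufficient precision is easy because both are in $[-1,0)$ and we only need rational upper/lower bounds on them that sandwich the test points $a$ and $b$. This reduces the whole argument to a finite check with exact arithmetic.

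The main obstacle is not logical but combinatorial/computational: the search space of trees grows quickly with $n$, and one must be patient enough to reach the smallest $n$ where the crossing phenomenon actually occurs. A secondary subtlety is that many pairs of trees are in fact comparable (consistent with the conjecture being plausible for small $n$), so one must check carefully that the claimed crossing of $I(T_1,x)$ and $I(T_2,x)$ really occurs inside $[\xi(T_2),0]$ rather than in the tail $[\xi(T_1), \xi(T_2))$, where it would only yield $T_2 \not\preceq T_1$ without refuting $T_1 \preceq T_2$. Once the correct pair is in hand, the disproof is essentially a one-paragraph verification.
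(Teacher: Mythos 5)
Your approach is correct and is essentially the same as the paper's: the authors also disprove the conjecture by a finite search over small trees, finding an incomparable pair at order $8$ (all trees of order at most $7$ turn out to be comparable) and certifying incomparability by exhibiting two test points, $x=-0.1$ and $x=-0.2$, both lying in $[\xi(T_2),0]\subseteq[\xi(T_1),0]$, at which $I(T_1,x)-I(T_2,x)$ takes opposite signs. Your discussion of where the crossing must occur matches the paper's (implicit) verification, so the proposal is sound.
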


Although Oboudi noted that all trees of order at most $6$ are comparable and we have verified that all trees of order $7$ are also all comparable, the trend stops at order $8$. The trees $T_1$ and $T_2$ in Figure~\ref{fig:incomptrees} both have order $8$, with $\xi(T_1)\approx -0.2451223338$ and $\xi(T_2)\approx -0.2410859067$, but $I(T_2,-0.1)>I(T_1,-0.1)$ and $I(T_2,-0.2)<I(T_1,-0.2)$. Hence $T_1$ and $T_2$ are incomparable with respect to $\preceq$.
\setcounter{subfigure}{0}
\begin{figure}[htb]
\def\c{0.5}
\def\r{2}
\centering
\subfigure[$T_1$]{
\scalebox{\c}{
\begin{tikzpicture}
\begin{scope}
	\node[shape=circle,draw=black,fill=black] (1) at (0*\r,0*\r) {};
	\node[shape=circle,draw=black,fill=black] (2) at (1*\r,0*\r) {};
	\node[shape=circle,draw=black,fill=black] (3) at (2*\r,-1*\r) {};
	\node[shape=circle,draw=black,fill=black] (4) at (2*\r,0*\r) {};
	\node[shape=circle,draw=black,fill=black] (5) at (2*\r,1*\r) {};
	\node[shape=circle,draw=black,fill=black] (6) at (-1*\r,0*\r) {};
	\node[shape=circle,draw=black,fill=black] (7) at (-2*\r,-1*\r) {};
	\node[shape=circle,draw=black,fill=black] (8) at (-2*\r,1*\r) {};
\end{scope}
\begin{scope}
    \path [-] (1) edge node {} (2);
    \path [-] (1) edge node {} (6);
    
    \path [-] (3) edge node {} (2);
    \path [-] (4) edge node {} (2);
    \path [-] (5) edge node {} (2);
    
    \path [-] (6) edge node {} (7);
    \path [-] (6) edge node {} (8);
  
\end{scope}

\end{tikzpicture}}}
\qquad
\subfigure[$T_2$]{
\scalebox{\c}{
\begin{tikzpicture}
\begin{scope}
	\node[shape=circle,draw=black,fill=black] (1) at (0*\r,0*\r) {};
	\node[shape=circle,draw=black,fill=black] (2) at (1*\r,0*\r) {};
	\node[shape=circle,draw=black,fill=black] (3) at (2*\r,0*\r) {};
	\node[shape=circle,draw=black,fill=black] (4) at (0*\r,1*\r) {};
	\node[shape=circle,draw=black,fill=black] (5) at (0*\r,-1*\r) {};
	\node[shape=circle,draw=black,fill=black] (6) at (-1*\r,0*\r) {};
	\node[shape=circle,draw=black,fill=black] (7) at (0*\r,-2*\r) {};
	\node[shape=circle,draw=black,fill=black] (8) at (-2*\r,0*\r) {};
\end{scope}
\begin{scope}
    \path [-] (1) edge node {} (2);
    \path [-] (1) edge node {} (6);
    \path [-] (1) edge node {} (4);
    \path [-] (1) edge node {} (5);
    
    \path [-] (3) edge node {} (2);

    \path [-] (5) edge node {} (7);

    \path [-] (6) edge node {} (8);
  
\end{scope}

\end{tikzpicture}}}
\caption{$T_1$ and $T_2$ are incomparable trees of order $8$ with respect to $\preceq$.}%
\label{fig:incomptrees}%
\end{figure}
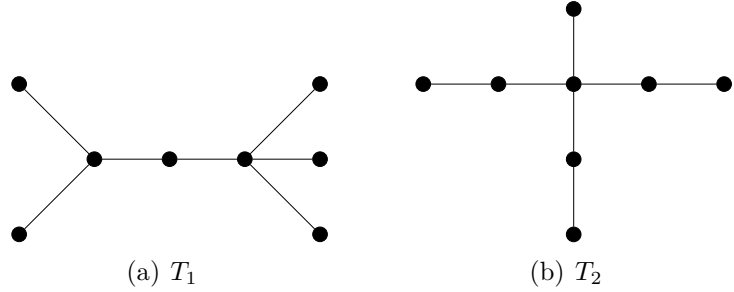

Note that by Lemma \ref{lem:relationresptostar}, $T_1^{\ast}$ and $T_2^{\ast}$ are also incomparable with respect to $\preceq$. In fact, applying the graph star operation an equal number of times to both $T_1$ and $T_2$ will always produce two incomparable trees with respect to $\preceq$. This gives the following theorem.

\begin{theorem}
If $k\ge 0$ and $n=2^{k+3}$, then there exist two trees of order $n$ that are incomparable with respect to $\preceq$. 
\end{theorem}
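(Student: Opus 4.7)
The plan is to prove this by induction on $k$, using the trees $T_1$ and $T_2$ from Figure~\ref{fig:incomptrees} as the base case and Lemma~\ref{lem:relationresptostar} to propagate incomparability through iterated graph star operations.

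For the base case $k=0$, set $S_1^{(0)}=T_1$ and $S_2^{(0)}=T_2$, which have order $2^{3}=8$. The discussion preceding the theorem establishes their incomparability: since both $-0.1$ and $-0.2$ lie in $[\xi(T_1),0]\cap[\xi(T_2),0]$ (because $\xi(T_1)\approx -0.245$ and $\xi(T_2)\approx -0.241$), the inequalities $I(T_2,-0.1)>I(T_1,-0.1)$ and $I(T_2,-0.2)<I(T_1,-0.2)$ simultaneously witness that $T_1\not\preceq T_2$ and $T_2\not\preceq T_1$.

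For the inductive step, suppose $S_1^{(k)}$ and $S_2^{(k)}$ are trees of order $2^{k+3}$ that are incomparable under $\preceq$. Define $S_i^{(k+1)}=(S_i^{(k)})^{\ast}$ for $i=1,2$. Since the graph star of any tree is obtained by attaching a pendant edge to each vertex, $S_i^{(k+1)}$ is itself a tree, and it has order $2\cdot 2^{k+3}=2^{(k+1)+3}$, as required. By Lemma~\ref{lem:relationresptostar}, we have $S_1^{(k+1)}\preceq S_2^{(k+1)}$ if and only if $S_1^{(k)}\preceq S_2^{(k)}$, and similarly with the roles of $1$ and $2$ reversed. Hence the assumed incomparability at stage $k$ transfers directly to incomparability at stage $k+1$.

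There is really no serious obstacle here: the entire argument is a bookkeeping exercise on top of Lemma~\ref{lem:relationresptostar}. The only points worth flagging explicitly are that the graph star operation preserves the property of being a tree (so we stay inside the target class) and that it doubles the order (so orders of the form $2^{k+3}$ are exactly the ones achievable by iterating from an order-$8$ starting point). The base-case verification of incomparability reduces to evaluating $I(T_1,x)$ and $I(T_2,x)$ at $x=-0.1$ and $x=-0.2$, which is a routine computation and is already asserted in the passage preceding the theorem.
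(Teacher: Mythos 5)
Your proof is correct and follows exactly the paper's argument: the paper also takes $T_1$ and $T_2$ of order $8$ as the base case (witnessed incomparable by the evaluations at $-0.1$ and $-0.2$) and iterates the graph star operation, invoking Lemma~\ref{lem:relationresptostar} at each stage to preserve incomparability while the order doubles. No differences worth noting.
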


\section{Conclusion}
We conclude by asking some open questions. In \cite{Csikvari2013posetII}, Csikv\'{a}ri showed that the partial order induced by $\preceq$ over all trees of fixed order had a unique maximum graph, $S_{n}$, and a unique minimum graph, $P_{n}$. We extended these results to connected unicyclic graphs as well as well-covered trees and connected well-covered unicyclic graphs. In the case of connected unicyclic graphs of fixed order, there were two minimum graphs, $C_n$ and $D_n$. However $C_n \sim D_n$ therefore we may consider the independence equivalence class $[C_n]$ as the minimum element. Moreover, in the each of the other cases there was a unique maximum and unique minimum independence equivalence class. This leads us to our first question.

\begin{ques}
Are there any interesting families of graphs $\mathcal{F}$, such that there are no maximum or minimum independence equivalence classes with respect to $\preceq$?
\end{ques}

Although we have shown that $\preceq$ is not a total order on trees of equal order, open problems still remain about the properties of this partial order.

\begin{ques}
Are there any interesting families of graphs $\mathcal{F}$, such that $\preceq$ is a total order?
\end{ques}

\begin{ques}
Can there be arbitrarily large anti-chains with respect to $\preceq$ among trees of equal order?
\end{ques}

What about the maximum and minimum graphs of other families of graphs? There are some that generalize trees that follow relatively easily form our work and Csikv\'{a}ri's \cite{Csikvari2013posetII} work. For example, connected bipartite graphs.
\begin{theorem}
If $G$ is a connected bipartite graph of order $n$, then $$P_n\preceq G\preceq K_{\left\lceil\frac{n}{2}\right\rceil,\left\lfloor\frac{n}{2}\right\rfloor}.$$
\end{theorem}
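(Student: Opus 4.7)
The plan is to split the proof into the two inequalities and reduce each to earlier results in the excerpt. For the lower bound $P_n \preceq G$, I would take any spanning tree $T$ of $G$. If $T = G$ then $G$ is itself a tree and $P_n \preceq G$ is immediate from Theorem~\ref{thm:treebounds}. Otherwise $T$ is a proper spanning subgraph of $G$, so Theorem~\ref{thm:subgraphlessthangraph} gives $T \prec G$; combined with $P_n \preceq T$ from Theorem~\ref{thm:treebounds} and the transitivity of $\preceq$ (Theorem~\ref{thm:partialorder}), this yields $P_n \preceq G$.

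For the upper bound I would use the fact that a connected bipartite graph has a unique bipartition. Writing $V(G) = A \cup B$ with $|A| = k$ and $|B| = n-k$, the graph $G$ is a spanning subgraph of $K_{k, n-k}$, so $G \preceq K_{k, n-k}$ by Theorem~\ref{thm:subgraphlessthangraph} (with equality allowed when $G = K_{k,n-k}$). By transitivity of $\preceq$ it then suffices to prove
\[ K_{k, n-k} \preceq K_{\lceil n/2 \rceil, \lfloor n/2 \rfloor} \]
for every integer $k \in \{0, 1, \ldots, n\}$.

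For this final reduction to complete bipartite graphs I would compute directly. Partitioning independent sets according to which side of the bipartition they lie in gives
\[ I(K_{a, b}, x) = (1+x)^a + (1+x)^b - 1. \]
Setting $a = \lceil n/2 \rceil$, $b = \lfloor n/2 \rfloor$, and $c = 1 + x$, the desired inequality $I(K_{k, n-k}, x) \geq I(K_{a, b}, x)$ becomes
\[ c^k + c^{n-k} \geq c^a + c^b. \]
Since $\xi(K_{a,b}) \in [-1, 0)$, we have $c \in (0, 1]$ on the interval $[\xi(K_{a,b}), 0]$. The case $c = 1$ is trivial; for $c \in (0, 1)$, the real function $t \mapsto c^t + c^{n-t}$ has second derivative $(\ln c)^2(c^t + c^{n-t}) > 0$ and is symmetric about $t = n/2$, so its minimum over integers is attained precisely at $t \in \{a, b\}$. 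This establishes the pointwise inequality and hence $K_{k, n-k} \preceq K_{\lceil n/2 \rceil, \lfloor n/2 \rfloor}$.

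I expect no substantive obstacle here: both directions collapse to Oboudi's tree theorem (Theorem~\ref{thm:treebounds}) together with monotonicity of $\preceq$ under subgraph inclusion, and the only real calculation is the elementary convexity fact about $c^t + c^{n-t}$. The one small subtlety worth flagging is that we need the pointwise inequality on the whole interval $[\xi(K_{a,b}), 0]$ rather than just at a single point, but this is immediate once one observes that $c = 1+x$ lies in $(0,1]$ throughout that interval, which is guaranteed by the general fact that $\xi$ of any graph lies in $[-1, 0)$.
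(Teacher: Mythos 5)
Your proof is correct, and its overall skeleton matches the paper's: the lower bound via a spanning tree, Theorem~\ref{thm:treebounds}, and Theorem~\ref{thm:subgraphlessthangraph}; the upper bound by embedding $G$ in $K_{k,n-k}$ and then comparing complete bipartite graphs. The one place you genuinely diverge is the final inequality $I(K_{k,n-k},x)\ge I\bigl(K_{\lceil n/2\rceil,\lfloor n/2\rfloor},x\bigr)$. The paper factors the difference of the two polynomials as $(1+x)^{|Y|}\bigl(I(\overline{K_{|X|-|Y|}},x)-I(K_{|X|-|Y|-k,k},x)\bigr)$ and invokes Theorem~\ref{thm:subgraphlessthangraph} once more, which keeps the argument entirely inside the graph-theoretic framework but requires the extra observation that $\xi(K_{|X|-|Y|-k,k})\le\xi(K_{\lceil n/2\rceil,\lfloor n/2\rfloor})$ to justify positivity on the right interval. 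Your route instead proves the pointwise inequality $c^k+c^{n-k}\ge c^{\lceil n/2\rceil}+c^{\lfloor n/2\rfloor}$ for all $c\in(0,1]$ by convexity and symmetry of $t\mapsto c^t+c^{n-t}$; this is more elementary, holds on all of $(-1,0]$ rather than just the needed subinterval, and so sidesteps the comparison of $\xi$-values entirely. Both are complete; yours trades a reuse of the subgraph lemma for a one-line calculus fact, which is arguably cleaner.
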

\begin{proof}
Let $G$ be a bipartite graph of order $n$ with bipartition $(X,Y)$ where $|X|=\left\lceil\frac{n}{2}\right\rceil+k$ and $|Y|=\left\lfloor\frac{n}{2}\right\rfloor-k$ for some integer $0\le k\le\left\lfloor\frac{n}{2}\right\rfloor-1 $. Therefore, $G$ is a subgraph of $K_{|X|,|Y|}$ and by Theorem~\ref{thm:subgraphlessthangraph}, $G\preceq K_{|X|,|Y|}$. Therefore, it suffices to show that $K_{|X|,|Y|}\preceq K_{\left\lceil\frac{n}{2}\right\rceil,\left\lfloor\frac{n}{2}\right\rfloor}$. For simplicity let $f_{|X|}=I(K_{|X|,|Y|},x)$ and $f_{\left\lceil\frac{n}{2}\right\rceil}=I(K_{\left\lceil\frac{n}{2}\right\rceil,\left\lfloor\frac{n}{2}\right\rfloor},x)$. Now for all $x\in [\xi(K_{\left\lceil\frac{n}{2}\right\rceil,\left\lfloor\frac{n}{2}\right\rfloor}),0]$,
\begin{align*}
f_{|X|}-f_{\left\lceil\frac{n}{2}\right\rceil}&=(1+x)^{|X|}+(1+x)^{|Y|}-1-\left( (1+x)^{\left\lceil\frac{n}{2}\right\rceil}+(1+x)^{\left\lfloor\frac{n}{2}\right\rfloor-1}\right)\\
&=(1+x)^{|Y|}\left( (1+x)^{|X|-|Y|}+1-\left( (1+x)^{|X|-|Y|-k}+(1+x)^{k} \right)\right)\\
&=(1+x)^{|Y|}\left(I(\overline{K_{|X|-|Y|}},x)-I(K_{|X|-|Y|-k,k},x) \right). \tag{*}\label{eq:bipartite}
\end{align*}
Now, $(1+x)^{|Y|}\ge 0$ for all $x$ in the desired interval, and $\overline{K_{|X|-|Y|}}$ is a subgraph of $K_{|X|-|Y|-k,k}$ so $I(\overline{K_{|X|-|Y|}},x)-I(K_{|X|-|Y|-k,k},x)\ge 0$ for all $x\in [\xi(K_{|X|-|Y|-k,k}),0]$ by Theorem~\ref{thm:subgraphlessthangraph}. Also by Theorem~\ref{thm:subgraphlessthangraph}, $\xi(K_{|X|-|Y|-k,k})\le \xi(K_{\left\lceil\frac{n}{2}\right\rceil,\left\lfloor\frac{n}{2}\right\rfloor})$. Therefore, (\ref{eq:bipartite}) is nonnegative, so $G\preceq K_{\left\lceil\frac{n}{2}\right\rceil,\left\lfloor\frac{n}{2}\right\rfloor}$.

Finally, the fact that $P_n\preceq G$ follows from the fact that $G$ is connected and Theorem~\ref{thm:subgraphlessthangraph} and Theorem~\ref{thm:treebounds}.

\end{proof}


A family of graphs that generalizes bipartite graphs (and therefore trees), is the family of triangle free graphs. While there are many more triangle free graphs than bipartite graphs, computations lead us to conjecture to following.

\begin{conj}
If $G$ is a connected triangle free graph of order $n$, then
$$P_n\preceq G\preceq K_{\left\lceil\frac{n}{2}\right\rceil,\left\lfloor\frac{n}{2}\right\rfloor}.$$
\end{conj}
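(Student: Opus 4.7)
The inequality $P_n \preceq G$ is immediate from prior results. Let $T$ be any spanning tree of the connected graph $G$; Theorem~\ref{thm:subgraphlessthangraph} gives $T \preceq G$, Theorem~\ref{thm:treebounds} gives $P_n \preceq T$, and transitivity (Theorem~\ref{thm:partialorder}) yields $P_n \preceq G$.

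\textbf{Upper bound, proposed approach.} I would first strengthen the statement to ``every triangle-free graph $G$ of order $n$ satisfies $G \preceq K_{\lceil n/2\rceil,\lfloor n/2\rfloor}$'' so that the inductive hypothesis survives a vertex deletion that may disconnect $G$. Then proceed by induction on $n$ via Theorem~\ref{thm:recursivepartialorder}(i). Base cases $n \le 5$ are checked by direct computation; the only non-bipartite instance in this range is $C_5 \preceq K_{3,2}$, verified by the algebraic identity $I(C_5,x) - I(K_{3,2},x) = x^2(1-x)$, and the bipartite cases already follow from the paper's bipartite theorem. For the inductive step, let $K := K_{\lceil n/2\rceil,\lfloor n/2\rfloor}$, choose $u \in V(K)$ on the larger side so that $K - u \cong K_{\lceil (n-1)/2\rceil,\lfloor (n-1)/2\rfloor}$ and $K - N[u] \cong \overline{K_{\lfloor(n-1)/2\rfloor}}$, and choose $v$ to be a minimum-degree vertex of $G$. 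Mantel's theorem applied to the triangle-free graph $G$ gives $\delta(G) \le \lfloor n/2\rfloor$. The first hypothesis $K - u \preceq G - v$ is then precisely the inductive hypothesis applied to the triangle-free graph $G - v$ of order $n-1$.

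\textbf{The main obstacle.} The entire difficulty is concentrated in verifying the second hypothesis $\overline{K_{\lfloor(n-1)/2\rfloor}} \preceq G - N[v]$. By Theorem~\ref{thm:subgraphlessthangraph}, this holds as soon as $\alpha(G - N[v]) \ge \lfloor(n-1)/2\rfloor$. Although $|V(G)\setminus N[v]| \ge n - 1 - \lfloor n/2\rfloor = \lceil(n-1)/2\rceil$ leaves just enough room, lower-bounding the independence number of a triangle-free graph is notoriously delicate (Shearer's bound gives only $\Omega(p\log p/\Delta)$), and the naive minimum-degree choice can fail: in $C_5$ every vertex $v$ yields $G - N[v] \cong K_2$, which has independence number $1$ rather than the required $2$, even though $C_5 \preceq K_{3,2}$. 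I therefore expect the proof to split on whether $G$ is bipartite (covered already) or contains an odd cycle, and in the non-bipartite case to require either a refined choice of $v$ exploiting a shortest odd cycle together with Theorem~\ref{thm:recursivepartialorder}(ii) applied to an edge $f$ on that cycle, or a triangle-freeness-preserving analogue of the $\star$- and dagger-operations of Section~\ref{sec:wcunicyclic} that rewrites $G$ into a complete bipartite graph through strictly $\preceq$-monotone local edge moves. The combinatorial challenge of preserving triangle-freeness under such a move, together with the absence of any subgraph-inclusion relationship between non-bipartite triangle-free graphs and $K_{\lceil n/2\rceil,\lfloor n/2\rfloor}$ (which was the workhorse of the bipartite proof), is what I expect to be the fundamental difficulty.
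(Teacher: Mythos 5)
This statement is posed in the paper as an open conjecture supported only by computations; the authors give no proof, so there is nothing on the paper's side to compare against. Your lower bound is correct and complete: a spanning tree $T$ of the connected graph $G$ gives $P_n\preceq T\preceq G$ by Theorems~\ref{thm:treebounds} and~\ref{thm:subgraphlessthangraph}. Your upper bound, however, is by your own account only a plan with an unresolved step, so as written the proposal is not a proof.

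That said, the obstacle you single out appears to be illusory, and removing it seems to close your own argument. You reduce the second hypothesis of Theorem~\ref{thm:recursivepartialorder}(i) to showing $\overline{K_{\lfloor(n-1)/2\rfloor}}\preceq G-N[v]$ and then demand $\alpha(G-N[v])\ge\lfloor(n-1)/2\rfloor$, i.e.\ that $\overline{K_{\lfloor(n-1)/2\rfloor}}$ occur as an \emph{induced} subgraph. But Theorem~\ref{thm:subgraphlessthangraph} applies to arbitrary subgraphs --- this is exactly how the paper uses it elsewhere, e.g.\ taking $\overline{K_{n-4}}\cup K_2$ as a subgraph of an arbitrary graph of order $n-2$ with an edge in the proof of Theorem~\ref{thm:maxminunicyclic} --- and an edgeless graph on $m$ vertices is a subgraph of \emph{any} graph with at least $m$ vertices. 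So the condition you actually need is only $|V(G-N[v])|=n-1-\deg(v)\ge\lfloor(n-1)/2\rfloor$, i.e.\ $\deg(v)\le\lfloor n/2\rfloor$, which your Mantel-type observation already supplies (for any edge $ab$ of a triangle-free graph, $N(a)\cap N(b)=\emptyset$ forces $\deg(a)+\deg(b)\le n$; if there is no edge, every degree is $0$). Your $C_5$ ``failure'' is not one: there $G-N[v]\cong K_2$ and $\overline{K_2}$ is a (non-induced) subgraph of $K_2$, so $\overline{K_2}\preceq K_2$ holds. With this correction, the strengthened induction over all (not necessarily connected) triangle-free graphs --- deleting a vertex of degree at most $\lfloor n/2\rfloor$ from $G$ and a vertex of the larger part from $K_{\lceil n/2\rceil,\lfloor n/2\rfloor}$ --- verifies both hypotheses of Theorem~\ref{thm:recursivepartialorder}(i) and would settle the conjecture. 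Since that would resolve an open problem of the paper, the two quoted black boxes (in particular the interval bookkeeping implicit in Theorem~\ref{thm:recursivepartialorder}) deserve a careful independent check before you claim the result.
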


\bibliographystyle{abbrv}
\bibliography{Unicyclic}

\end{document}